\newtheorem{thm}{Theorem}
\newtheorem{dfn}[thm]{Definition}
\newtheorem{lem}[thm]{Lemma}
\newtheorem{exm}[thm]{Example}
\newtheorem{prop}[thm]{Proposition}
\newtheorem{rem}[thm]{Remark}
\newtheorem{cor}[thm]{Corollary}
\newtheorem{open}{Problem}
\newcommand{\nset}{\mathds{N}}
\newcommand{\rset}{\mathds{R}}
\newcommand{\pset}{\mathds{P}}
\newcommand{\aff}{\mathrm{aff}\,}
\newcommand{\conv}{\mathrm{conv}\,}
\newcommand{\cone}{\mathrm{cone}\,}
\newcommand{\lin}{\mathrm{lin}\,}
\newcommand{\Pos}{\mathrm{Pos}}
\newcommand{\range}{\mathrm{range}\,}
\newcommand{\rank}{\mathrm{rank}\,}
\newcommand{\sign}{\mathrm{sgn}}
\newcommand{\supp}{\mathrm{supp}\,}
\newcommand{\und}{\;\wedge\;}
\newcommand{\folgt}{\;\Rightarrow\;}
\newcommand{\gdw}{\;\Leftrightarrow\;}
\newcommand{\inter}{\mathrm{int}\,}
\newcommand{\diff}{\mathrm{d}}
\newcommand{\pos}{\mathrm{Pos}}
\newcommand{\nor}{\mathrm{Nor}}
\newcommand{\relint}{\mathrm{relint}\,}
\newcommand{\one}{\mathds{1}}
\newcommand{\cA}{\mathcal{A}}
\newcommand{\cB}{\mathcal{B}}
\newcommand{\cat}{\mathcal{C}}
\newcommand{\cD}{\mathcal{D}}
\newcommand{\cE}{\mathcal{E}}
\newcommand{\cF}{\mathcal{F}}
\newcommand{\cK}{\mathcal{K}}
\newcommand{\cN}{\mathcal{N}}
\newcommand{\pyt}{\mathcal{P}}
\newcommand{\cS}{\mathcal{S}}
\newcommand{\cX}{\mathcal{X}}
\newcommand{\cW}{\mathcal{W}}
\newcommand{\cV}{\mathcal{V}}
\newcommand{\cZ}{\mathcal{Z}}
\newcommand{\cL}{\mathcal{L}}
\newcommand{\sA}{\mathsf{A}}
\newcommand{\sB}{\mathsf{B}}
\newcommand{\fA}{\mathfrak{A}}
\newcommand{\fM}{\mathfrak{M}}
\newcommand{\fR}{\mathfrak{R}}
\newcommand{\fp}{\mathfrak{p}}
\newcommand{\fq}{\mathfrak{q}}
\newcommand{\fw}{\mathfrak{w}}
\newcommand{\fz}{\mathfrak{z}}
\author{Philipp J.\ di~Dio}
\address{University of Leipzig, Mathematical Institute, Augustusplatz 10/11, D-04109 Leipzig, Germany}
\address{Max Planck Institute for Mathematics in the Sciences, Inselstra{\ss}e 22, D-04103 Leipzig, Germany}
\email{didio@uni-leipzig.de}
\author{Konrad Schm\"udgen}
\address{University of Leipzig, Mathematical Institute, Augustusplatz 10/11, D-04109 Leipzig, Germany}
\email{schmuedgen@math.uni-leipzig.de}
\date{\today}
\begin{document}

\maketitle


\begin{abstract}
Let $\sA=\{a_1,\dots,a_m\}$, $m\in\nset$, be measurable functions on a measurable space $(\cX,\fA)$.
If $\mu$ is a positive measure on $(\cX,\fA)$ such that $\int a_i d\mu<\infty$ for all $i$, then the sequence $(\int a_1 d\mu,\dots,\int a_m d\mu)$ is called a moment sequence. By Richter's Theorem each moment sequence has a $k$-atomic representing measure with $k\leq m$. The set $\cS_\sA$ of all moment sequences is the moment cone. 

The aim of this paper is to analyze the various structures of   the moment cone. The main results concern  the facial structure (exposed faces, facial dimensions) and lower and upper bounds of the  Carath\'eodory number (that is, the smallest number of atoms which suffices for all moment sequences) of the convex cone $\cS_\sA$. In the case when $\cX\subseteq \rset^n$ and  $a_i\in C^1(\cX,\rset)$, the differential structure of the moment map and regularity/singularity properties of moment sequences are analyzed. The maximal mass problem is considered and some  applications to other problems are sketched. 
\end{abstract}

\textbf{AMS  Subject  Classification (2010)}.
 44A60, 14P10.\\

\textbf{Key words:} truncated moment problem, convex cone, moment cone, atomic measure, Cara\-th\'eodory number, positive polynomial, conic optimization, maximal mass, flat extension

\tableofcontents

\section{Introduction}

The moment problem was first formulated and investigated by T.\ Stieltjes in his famous memoir \cite{stielt94}. However, the use of moments for the study of integrals of functions can be traced back to P.\ L.\ Chebyshev and A.\ A.\ Markov \cite{kreinMarkovMomentProblem}, and even to C.\ F.\ Gau{\ss} \cite{gauss15}. The moment problem is now a well-studied classical problem which has deep connections with many mathematical fields (see e.g.\ \cite{landauMomAMSProc}, \cite{lauren09}, \cite{fialkow10}, \cite{bernar13}, and \cite{schmudMomentBook}) and a broad scope of applications (see e.g.\ \cite{lasserreSemiAlgOpt}). In its most general form the moment problem is the following:

\textbf{Generalized Moment Problem:}
\textit{Let $(\cX,\fA)$ be a measurable space and $\sA = \{a_i\}_{i\in I}$ be a (finite or infinite) set of measurable (real-valued) functions on $(\cX,\fA)$. Given a (real) sequence $s = (s_i)_{i\in I}$, does there exists a (positive) measure $\mu$ on $(\cX,\fA)$ such that
\begin{equation}\label{eq:momentIntegral}
s_i = \int_\cX a_i(x)~\diff\mu(x) \quad\forall i\in I?
\end{equation}}

This general problem has a number of natural and important  subproblems:
\begin{enumerate}[1)]
\item Existence: When does there exist a measure $\mu$ such that (\ref{eq:momentIntegral}) holds?
\item Determinacy: Is the measure $\mu$ satisfying (\ref{eq:momentIntegral}) unique?
\item Characterization of solutions: In the case when the measure $\mu$ is not unique, how can one describe all solutions?
\item Simplification: Are there  natural classes of ``simplest'' solutions?
\end{enumerate}

The name \emph{moment} problem stems from the classical case of monomials on $\rset$: The number $s_n = \int_\rset x^n~\diff\mu(x)$ is called the \emph{$n$-th moment} of the measure $\mu$ on $\rset$. If $\sA$ is the set of all momomials $x^\alpha$ for $\alpha \in \nset_0^d$, then the problem is called  \emph{multi-dimensional full} moment problem. If $\sA$ consists of all monomials $x^\alpha$ up to a fixed degree $|\alpha|\leq m$, or more generally, if the set $\sA = \{a_1,\dots,a_m\}$ is finite,  we obtain the \emph{truncated} moment problem, which is the subject of this paper.  
Even if many results are formulated for general functions, the case of monomials $x^\alpha, |\alpha|\leq m$, is our main guiding example. By an important theorem of J.\ Stochel \cite{stochel01}, solving all truncated power moment problems implies solving the full power moment problem.

There is a huge literature of thousands research papers about versions and applications of moment problems. Even for the truncated moment problem there are an extensive literature and interactions with various fields in pure and applied mathematics, see \cite{ahiezer62}, \cite{landauMomAMSProc}, \cite{lauren09}, \cite{lasserreSemiAlgOpt}, and \cite{schmudMomentBook}. Let us mention some of  the main lines of such  interactions. As in the case of the full moment problem, there is a close interplay with real algebraic geometry, especially with nonnegative polynomials, see e.g.\ \cite{robinson69}, \cite{choi80}, \cite{reznick92}, \cite{matzkePhD}, \cite{harris99}, and \cite{marshallPosPoly}. Semi-algebraic sets and convex hulls of curves occur in a natural manner: The moment cone is semi-algebraic (Example \ref{exm:semialgSA}), the set of atoms is a real algebraic set (the core variety), and the moment cone is the convex hull of the moment curve. There are connections with sums of squares and Waring decompositions, see e.g.\ \cite{reznick92}, \cite{lauren09}, \cite{blekh12}, \cite{blekhe15a}, \cite{blekhe15}. An important method of solving truncated moment problems is based on flat extensions of Hankel  matrices, see e.g.\ \cite{curto3}, \cite{curto2}, \cite{lauren09a}, \cite{vasiles12}, \cite{curto13}, \cite{mourrain16}, and \cite{sauerArXiv2018}. Another related topic is polynomial optimization \cite{lasserreSemiAlgOpt}. Determining the maximal mass at a given atom leads to a convex optimization problem \cite{schmud15}, \cite[Section 18.4]{schmudMomentBook}. By Richter's Theorem, each truncated moment problem has a finitely atomic solution. This builds the bridge to the theory of numerical integration and  to   cubature formulas for the approximation of integrals, see e.g.\ \cite{stroud71}, \cite{moller76}, \cite{davis84},  \cite{putina97}, \cite{putina00}, \cite{fialkow10}, \cite{vasile14}.  A related problem deals with the smallest number of atoms for a moment sequence, the so-called Carath\'eodory number \cite{rienerOptima}, \cite{didio17Cara}. The  moment sequences and the  nonnegative polynomials on a set form cones in finite-dimensional real vector spaces, so convex hulls of curves \cite{scheider18} enter the study of truncated moment problems in a natural manner. This was noted already early in \cite{kemper68}, \cite{kemper87} and developed in \cite{didio17Cara} and \cite[Chapter 18]{schmudMomentBook}. Most questions concerning solutions of the truncated moment problem depends in fact on the structure of the moment cone. In our opinion, the moment cone is the fundamental object of this theory.

This paper is about the \emph{moment cone of the truncated moment problem} for a finite set $\sA=\{a_1,\dots$, $a_m\}$ of measurable functions on a measurable space $(\cX,\fA)$. Since each moment sequence has a finitely atomic representing measure, the corresponding moment functional is a positive linear combination of point evaluations. This in turn suggests other interesting subproblems such as:
\begin{enumerate}[1)]\setcounter{enumi}{4}
\item Number of atoms: How many atoms are needed to represent a given  moment sequence? What is the smallest number $n$ such that all moment sequences can be represented by $k\leq n$ atoms?  

\item Position  of atoms: Which points $x\in \cX$ can appear as atoms of a representing measure of  a given moment sequence?
\end{enumerate}

Question 5 leads to the Carath\'eodory number of a moment sequence and of the whole moment cone, while the answer to question 6 is given by the core variety.

This paper continues our study of truncated moment problems in \cite{schmud15}, \cite{didio17w+v+} and \cite{didio17Cara}. As noted above, the central topic of the present paper is the cone of all moment sequences, the so-called \emph{moment cone} $\cS_\sA$ . This is a convex but not necessarily closed cone in $\rset^m$. It spans the vector space  $\rset^m$ if  the set $\sA$ is linearly independent. Our main aim is to analyze the various structures of the moment cone. These are 
\begin{itemize}
\item the structure of the convex set (boundary, interior, exposed faces),
\item the differential structure (given by the derivative of the moment map if the functions $a_i$ are differentiable),
\item  the internal structure (regular and singular points).
\end{itemize}

We try to develop these topics and  properties of the moment cone as complete as possible (to the best of our knowledge) and hope that  the paper is  also readable by non-experts. In some sense,  this paper is a mixture of a survey of known facts and of a research paper with new results. To make the presentation self-contained, we occasionally repeat results and examples from our previous papers, sometimes with new proofs based on arguments from convex analysis. At the end of the paper a few open problems are listed. We hope that our treatment of the moment cone  encourages further research on this topic.

Let us briefly describe  the structure of this paper. In Section \ref{sec:momcone} we introduce the moment cone  $\cS_\sA$.  A crucial result is  Richter's Theorem which states that each moment sequence has a $k$-atomic representing measure with $k\leq m$. In Section \ref{sec:richter} we reprove this theorem  and present a detailed  historical discussion around this result. In Section \ref{sec:mommap} we define the moment map and analyze its differential structure in the case when $\cX\subseteq \rset^n$ and the functions $a_i$ are differentiable.

Sections \ref{sec:face} and \ref{sec:cara} are the main sections of this paper. In Section \ref{sec:face} we investigate the face structure and the geometry of the cone $\cS_\sA$, in particular the face $\cF_s$, the exposed face $\cE_s$ of a moment sequence $s$, and face dimensions. Section \ref{sec:cara} is about Carath\'eodory numbers. We present new lower and upper bounds based on face dimensions and show that $\cF_s$ and $\cE_s$ are closely related to the set of atoms $\cW(s)$ and the set $\cV(s)$. Using Harris' polynomial we develop in the polynomial case without gaps the first example of a moment sequence $s$ for which $\cV(s) \neq \cW(s)$.

In Section \ref{sec:inner} we study the interior of the moment cone and the regularity and singularity  of moment sequences. Section \ref{sec:appl} contains various applications (SOS and tensor decomposition, Pythagoras numbers for polynomials in one variable with gaps). Section \ref{sec:maxmass} deals with the maximal mass  and its relation to conic optimization. The maximal mass of a moment sequence $s$ at $x$ is equal to the infimum  of $L_s(p)$ over $p\geq 0$ with $p(x)=1$. Using again Harris' polynomial  we give an example where the infimum  is not a minimum. The final Section \ref{sec:open} collects some open problems which might be  starting points for future investigations.

\section{Preliminaries on Integration}

We briefly summarize some facts and notations from integration theory. For a deeper treatment we refer to standard texts such as \cite{bogachevMeasureTheory}.

Throughout, $(\cX,\fA)$ denotes a \emph{measurable space}. This means that $\fA$ is a $\sigma$-algebra on a non-empty set $\cX$. $\mu$ is a positive measure on $\fA$, i.e., $\mu: \fA\rightarrow [0,\infty]$, if it is a countably additive set function on $\fA$. For any $x\in \cX$, the Dirac (delta) measure
\[\delta_x(A) := \begin{cases} 1 & \text{if}\ x\in A,\\ 0 & \text{if}\ x\not\in A\end{cases}\]
is a measure on $(\cX,\fA)$; this holds  even when the one point set $\{x\}$ is not in $\fA$.

A measure $\mu$ is called \emph{$k$-atomic} if there are $k$ pairwise different points $x_1,\dots,x_k\in \cX$ and positive numbers $c_1,\dots,c_k$ such that 
\begin{equation}\label{eq:katomicmeasure}
\mu=\sum_{j=1}^k c_j\cdot \delta_{x_j}.
\end{equation}
The points $x_i$ are called \emph{atoms} and the numbers $c_i$ \emph{masses} of $\mu$. If the numbers $c_i$ in (\ref{eq:katomicmeasure}) are real rather than positive, then $\mu$ is called a \emph{$k$-atomic signed measure}.

A function $a:\cX\rightarrow\rset$ is \emph{$\fA$-measurable} if $a^{-1}(B):=\{x\in \cX:a(x)\in B\}\in\fA$ for all Borel subsets $B$ of $\rset$ and $a$ is called  $\mu$-integrable if
\begin{equation}\label{eq:athMoment}
\int_\cX a(x)~\diff\mu(x)\in (-\infty,\infty).
\end{equation}
By the definition of the Lebesgue integral, (\ref{eq:athMoment}) is equivalent to 
\begin{subequations}
\begin{align}
\Leftrightarrow\;\; &\int |a(x)|~\diff\mu(x)<\infty\label{eq:cond2}\\
\Leftrightarrow\;\; &\int a_+(x)~\diff\mu(x),\ \int a_-(x)~\diff\mu(x) < \infty,
\end{align}
\end{subequations}
where $a_+(x) := \max\{a(x),0\}$ and $a_-(x) := \max\{-a(x),0\}$, i.e., $a = a_+ - a_-$ and $|a| = a_+ + a_-$.

Throughout, $\sA = \{a_1,\dots,a_m\}$ denotes a finite set of real-valued measurable functions on $(\cX,\fA)$.

\begin{rem}\label{rem:finiteais}
In measure theory,   measurable functions can have values in $[-\infty,\infty]$. Since all moments have to be  finite, the functions $a_i$'s are integrable, so it is sufficient to work on the measurable space
\[\tilde{\cX} := \{x\in\cX \;|\; a_i(x)\in \rset\ \forall i=1,\dots,m\}.\]
On the measurable space $(\tilde{\cX},\fA|_{\tilde{\cX}})$ all functions $a_i$ have finite  real values. We will assume this in what follows.
\end{rem}

\begin{dfn}
\[\fM_\sA := \{\mu\ \text{positive measure on}\ (\cX,\fA) \,|\, a_1,\dots,a_m\ \text{are}\ \mu\text{-integrable}\}.\]
\end{dfn}

Since the functions $a_i$ are finite, $\delta_x\in\fM_\sA$ for any $x\in \cX$, so $\fM_\sA$ is not empty.

\begin{lem}\label{lem:diracmeasure}
$\delta_x\in\fM_\sA$ for all $x\in\cX$.
\end{lem}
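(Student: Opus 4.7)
The plan is to verify the two conditions in the definition of $\fM_\sA$ directly: that $\delta_x$ is a positive measure on $(\cX,\fA)$, and that every $a_i$ is $\delta_x$-integrable. The first is essentially built into the definition given just above the lemma, where it is observed that $\delta_x$ is countably additive on $\fA$ (and even on $2^\cX$), taking only the values $0$ and $1$, so it is a positive measure on $(\cX,\fA)$ without any further hypothesis on $\{x\}$.

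For integrability, I would use the equivalent characterization \eqref{eq:cond2}: a measurable function $a$ is $\mu$-integrable iff $\int |a|\,\diff\mu<\infty$. Applied to $a=a_i$ and $\mu=\delta_x$, the standard calculation for Dirac measures gives
\[
\int_\cX |a_i(y)|\,\diff\delta_x(y) \;=\; |a_i(x)|.
\]
By Remark \ref{rem:finiteais} we work on $\tilde\cX$, so $a_i(x)\in\rset$ and hence $|a_i(x)|<\infty$ for every $i=1,\dots,m$. Therefore each $a_i$ is $\delta_x$-integrable, and $\delta_x\in\fM_\sA$.

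There is essentially no obstacle here; the lemma is a bookkeeping statement whose sole purpose is to record that the reduction to $\tilde\cX$ in Remark \ref{rem:finiteais} guarantees $\fM_\sA\neq\emptyset$ and, more specifically, that $\fM_\sA$ contains all Dirac measures — a fact that will be used repeatedly when building finitely atomic representing measures $\mu=\sum_{j=1}^k c_j\delta_{x_j}$.
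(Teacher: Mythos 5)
Your proof is correct and follows the same reasoning as the paper, which disposes of this lemma in one line by observing that $\delta_x$ is a measure on $(\cX,\fA)$ regardless of whether $\{x\}\in\fA$ and that, after the reduction to $\tilde\cX$ in Remark \ref{rem:finiteais}, each $a_i(x)$ is finite, so $\int |a_i|\,\diff\delta_x=|a_i(x)|<\infty$. Nothing is missing.
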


It is eminent that the following holds:
\begin{equation}\label{eq:aposimpliesIntegralPos}
a(x) \geq 0\ \text{on}\ \cX \qquad\Rightarrow\qquad \int a(x)~\diff\mu(x) \geq 0\qquad \forall\; \mu\in\fM_\sA.
\end{equation}

\begin{exm}\label{exm:measurablespace}
Let $\cX = [0,2]$ and $\fA = \{\emptyset,[0,1),[1,2],\cX\}$. Then all measurable functions on $(\cX,\fA)$ are of the form $a(x) = c_1 \chi_{[0,1)}(x) + c_2 \chi_{[1,2]}(x)$, where  $\chi_A$ denotes the characteristic function on $A\in\fA$.  Then $\delta_x\in\fM_\sA$ for $x\in [0,2]$ despite the fact that $\{x\}\not\in\fA$ and we have  $\delta_{x_1} = \delta_{x_2}$ for all $x_1,x_2\in [0,1)$ or $x_1,x_2\in [1,2]$.
\end{exm}

As mentioned in the introduction, our guiding example will be the polynomial case and we therefore introduce the following notations: For $n,d\in\nset$ we set
\[\sA_{n,d} := \{x^\alpha = x_1^{\alpha_1}\cdots x_n^{\alpha_n} \;|\; |\alpha| = \alpha_1 + \dots + \alpha_n \leq d\} \quad\text{on}\quad \cX=\rset^n,\]
\[\sB_{n,d} := \{x^\alpha = x_0^{\alpha_0}\cdots x_n^{\alpha_n} \;|\; |\alpha| = \alpha_0 + \dots + \alpha_n = d\} \quad\text{on}\quad \cX=\pset^n,\]
$\cA_{n,d}:=\lin\sA_{n,d}$, and $\cB_{n,d}:=\lin\sB_{n,d}$. We have of course $|\sA_{n,d}| = |\sB_{n,d}| = \begin{pmatrix} n+d\\ n\end{pmatrix}$.

\section{The Moment Cone}
\label{sec:momcone}

The following definition collects a number of basic notions on truncated moment problems. Lemma \ref{lem:diracmeasure} motivates the following definition.

\begin{dfn}
The \emph{moment curve $s_\sA$} is defined by
\[s_\sA: \cX\rightarrow\rset^m,\ x\mapsto s_\sA(x) := \begin{pmatrix}a_1(x)\\ \vdots\\ a_m(x)\end{pmatrix}\]
and the \emph{moment cone $\cS_\sA$} is
\[\cS_\sA := \left\{ \int s_\sA(x)~\diff\mu(x)\,\middle|\, \mu\in\fM_\sA \right\}\subseteq\rset^m.\]
A sequence $s\in\cS_\sA$ is called a \emph{moment sequence} and a measure $\mu\in\fM_\sA$ with $s = \int s_\sA(x)~\diff\mu(x)$ is called a \emph{representing measure of $s$}. We denote the \emph{set of representing measures of $s$} by:
\[\fM_\sA(s) := \left\{ \mu\in\fM_\sA \,\middle|\, s = \int s_\sA(x)~\diff\mu(x) \right\}.\]
\end{dfn}

From the preceding definition the following is obvious.

\begin{lem}
The moment cone $\cS_\sA$  is a convex cone in $\rset^m$.
\end{lem}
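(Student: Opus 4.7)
The plan is to verify the two defining properties of a convex cone in $\rset^m$: closure under nonnegative scalar multiplication and closure under addition. (Nonemptiness is immediate from Lemma \ref{lem:diracmeasure}, which gives $s_\sA(x)\in\cS_\sA$ for every $x\in\cX$, and the zero measure gives $0\in\cS_\sA$.) Both closure properties should follow transparently from the fact that $\fM_\sA$ is itself closed under the corresponding operations on positive measures together with linearity of the integral.

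More concretely, I would first fix $s\in\cS_\sA$ with representing measure $\mu\in\fM_\sA(s)$ and $\lambda\geq 0$, and observe that $\lambda\mu$ is a positive measure with $\int |a_i|\,\diff(\lambda\mu) = \lambda\int |a_i|\,\diff\mu < \infty$, hence $\lambda\mu\in\fM_\sA$; linearity then gives $\int a_i\,\diff(\lambda\mu) = \lambda s_i$, so $\lambda s \in \cS_\sA$. Second, given $s,t\in\cS_\sA$ with $\mu\in\fM_\sA(s)$ and $\nu\in\fM_\sA(t)$, the set-function $\mu+\nu$ is a positive measure with $\int |a_i|\,\diff(\mu+\nu) = \int |a_i|\,\diff\mu + \int |a_i|\,\diff\nu < \infty$, so $\mu+\nu\in\fM_\sA$, and $\int a_i\,\diff(\mu+\nu) = s_i + t_i$, yielding $s+t\in\cS_\sA$.

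There is no real obstacle here: the statement is essentially just an unpacking of the definition together with linearity of the Lebesgue integral and the fact that nonnegative linear combinations of positive measures are positive measures. The only point that needs a brief mention is the integrability bookkeeping (verifying that $\lambda\mu$ and $\mu+\nu$ still lie in $\fM_\sA$), which is handled by the equivalence of $\mu$-integrability with finiteness of $\int|a_i|\,\diff\mu$ recorded in equation~(\ref{eq:cond2}). The whole argument should take only a few lines.
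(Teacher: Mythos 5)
Your proof is correct and is exactly the unpacking that the paper has in mind: the paper offers no written proof, stating only that the lemma is obvious from the definition, and your verification of closure under nonnegative scaling and addition of measures (with the integrability bookkeeping via finiteness of $\int|a_i|\,\diff\mu$) is the standard argument being implicitly invoked.
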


Since $s_\sA(x) = \int s_\sA(y)~\diff\delta_x(y)$, each vector $s_\sA(x)$ belongs to the moment cone  $\cS_\sA$ and the Delta measure $\delta_x$ is a  representing measure of $s_\sA(x)$.

\begin{dfn}~
$\pos(\sA) := \{p\in\lin\sA \,|\, p(x)\geq 0\ \forall x\in\cX\}.$
\end{dfn}
\begin{lem}\label{lem:linearindependentA}
The following are equivalent:
\begin{enumerate}[i)]
\item The convex cone spanned by $s_{\sA}(\cX)$ is $k$-dimensional.
\item $\cS_\sA$ is $k$-dimensional.
\item $\lin\sA$ is a $k$-dimensional vector space.
\item $\exists b_1,\dots,b_k\in\lin\sA$ linearly independent with $\lin\sA = \lin\{b_1,\dots,b_k\}$.
\end{enumerate}
\end{lem}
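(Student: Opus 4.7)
The plan is to view this as three separate equivalences: (iii) $\Leftrightarrow$ (iv), which is the standard linear-algebraic characterization of dimension, plus (i) $\Leftrightarrow$ (iii) and (i) $\Leftrightarrow$ (ii), which require identifying the linear hull of $s_\sA(\cX)$ in $\rset^m$ with a space naturally isomorphic to $\lin\sA$.

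For (i) $\Leftrightarrow$ (iii), I would work dually. Consider the surjective linear map $\Phi\colon \rset^m\to \lin\sA$ sending $(\lambda_1,\dots,\lambda_m)\mapsto \sum_{i=1}^m \lambda_i a_i$. Its kernel
\[
N := \left\{\lambda\in\rset^m \,\middle|\, \sum_{i=1}^m \lambda_i a_i(x)=0\ \forall x\in\cX\right\}
\]
has dimension $m-\dim\lin\sA$. Identifying $\rset^m$ with its dual, $N$ is precisely the annihilator of the set $\{s_\sA(x):x\in\cX\}$, since $\langle\lambda,s_\sA(x)\rangle=\sum\lambda_i a_i(x)$. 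Consequently
\[
\dim\lin\{s_\sA(x):x\in\cX\}= m-\dim N=\dim\lin\sA,
\]
and since the dimension of a convex cone equals the dimension of its linear hull, this gives (i) $\Leftrightarrow$ (iii).

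For (i) $\Leftrightarrow$ (ii), note first that $\cS_\sA\supseteq\cone\{s_\sA(x):x\in\cX\}$, because $s_\sA(x)=\int s_\sA\,\diff\delta_x$ lies in $\cS_\sA$ for every $x\in\cX$ by Lemma \ref{lem:diracmeasure}, and $\cS_\sA$ is a convex cone. For the reverse inclusion at the level of linear hulls, I would use the same annihilator: if $\lambda\in N$, then for any $\mu\in\fM_\sA$,
\[
\langle\lambda,\textstyle\int s_\sA\,\diff\mu\rangle=\int \sum_{i=1}^m\lambda_i a_i(x)\,\diff\mu(x)=0,
\]
so every element of $\cS_\sA$ annihilates $N$. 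Hence $\cS_\sA\subseteq \lin\{s_\sA(x):x\in\cX\}$, and we obtain
\[
\cone\{s_\sA(x):x\in\cX\}\subseteq \cS_\sA\subseteq\lin\{s_\sA(x):x\in\cX\}.
\]
Since the outer two sets have the same dimension, so does $\cS_\sA$, proving (i) $\Leftrightarrow$ (ii).

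Finally, (iii) $\Leftrightarrow$ (iv) is simply the statement that a vector space has dimension $k$ precisely when it is spanned by $k$ linearly independent vectors. The only subtlety in the whole argument is the dual interpretation of $N$ as the annihilator of the moment curve; once that identification is made, the rest is mechanical, and notably no appeal to Richter's Theorem is needed to relate $\cS_\sA$ to $\cone\{s_\sA(x):x\in\cX\}$ at the level of dimension.
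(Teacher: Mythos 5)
Your proof is correct and amounts to the same elementary argument as the paper's, which simply declares the implications iv) $\Leftrightarrow$ iii) $\Rightarrow$ i) $\Rightarrow$ ii) ``clear'' and handles ii) $\Rightarrow$ iii) by the observation $\dim\cS_\sA\leq\dim\lin\sA$; your annihilator computation and the sandwich $\cone s_\sA(\cX)\subseteq\cS_\sA\subseteq\lin s_\sA(\cX)$ are exactly the details behind those ``clear'' steps. You are also right that no appeal to Richter's Theorem is needed (nor could it be, since its proof relies on this lemma).
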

\begin{proof}
iv) $\gdw$ iii) $\folgt$ i) $\folgt$ ii) are clear.

ii) $\folgt$ iii): Assume to the contrary that $\lin\sA$ is $l$-dimensional with $l < k$, then $\dim\cS_\sA \leq l < k$, which  is a contradiction.
\end{proof}

Throughout the rest of this paper we assume the following: 
\[\text{\bf The set of functions}\ \sA = \{a_1,\dots,a_m\}\ \text{\bf is linearly independent.}\tag{$*$}\]

By Lemma \ref{lem:linearindependentA} this is no loss of generality. It should be emphasized that basic properties  of the moment cone (facial structure, Carath\'eodory number etc.) do not depend on the particular choice of the linearly independent set $\sA$.

By $(*)$,  $\dim \lin \sA=m$. For each real sequence $s=(s_1,\dots,s_m)$ there is a unique   linear functional $L_s$ on $\lin \sA$,  called the {\it Riesz functional} of $s$, defined by $$L_s(a_i)=s_i,~~ i=1,\dots,m.$$  Obviously, if $s\in \cS_\cA$ and $\mu\in \fM_\sA(s)$, then $L_s(p)=\int p(x)\, d\mu$ for\, $p\in \lin \sA$.

\begin{dfn}
$\pos(\sA) := \{p\in\lin\sA \,|\, p(x)\geq 0\ \forall x\in\cX\}.$
\end{dfn}

Let $\cL_\sA$ denote the set of all Riesz functionals $L_s$ for $s\in \cS_\sA$. Clearly,  $\cL_\sA$ and
\[\pos(\sA)^\wedge=\{L\in (\lin\sA)^*: L(p)\geq 0\ \forall p\in \pos(\sA)\}\]
are cones in the dual space $(\lin\sA)^*$ and $\cS_\sA \cong \cL_\sA \subseteq\pos(\sA)^\wedge$.

\begin{exm}
Let $\sA=\{1,x,x^2\}$ on  $\cX=\rset$ and $s=(0,0,1)$. Then $L_s\in \pos (\sA)^\wedge$, but $L_s\notin \cL_\sA$ (i.e., $s\notin\cS_\sA$). (Indeed, otherwise $s_0=0$ and $\mu\in \fM_\sA(s)$ would imply $\mu=0$, which contradicts $s_2=1$.) 
\end{exm}

The dual cone $\pos(\sA)^\wedge$ coincides with  the closure of $\cL_\sA$ in  $(\lin\sA)^*$. Sufficient conditions for $\cL_\sA$ being closed,  equivalently   $\cL_\sA=\pos(\sA)^\wedge$, are given in \cite{schmudMomentBook}, see Theorems 1.26, 1.30, and  Proposition 1.27 therein.

Since $\cS_\sA$ is a convex cone, we repeat basic definitions and facts from the theory of convex sets and fix some notation, see e.g.\ \cite{rock72}, \cite{solta15}, or \cite{schne14} for more details.

For a $v\in\rset$ we set
\[H_v := \{x\in\rset^m \,|\, \langle v, x\rangle = 0\}\quad {\rm and}\quad  H_v^+ := \{x\in\rset^m \,|\, \langle v,x\rangle \geq 0\}.\]
If $v\neq 0$, then $H_v$ is called the \emph{hyperplane} with normal vector $v$ and $H_v^+$ the corresponding \emph{halfspace}. We shall say that $H_v^+$ is a \emph{containing halfspace} for $\cS_\sA$ iff $\cS_\sA \subseteq H_v^+$, that $H_v$ is a \emph{supporting hyperplane} iff $\cS_\sA\subseteq H_v^+$ and $\overline{\cS_\sA}\cap H_v\neq \emptyset$ and finally that $H_v$ \emph{supports $\cS_\sA$ at $s\in\cS_\sA$} iff $H_v$ is a supporting hyperplane and $s\in H_v$. For a convex cone $K\subseteq\rset^m$ we denote by $K^* := \{v\in\rset^m \,|\, K\subseteq H_v^+\}$ its \emph{dual cone}. An \emph{extreme face} (briefly, a face) of a convex set $K$ is a subset $F$ such that\  $\lambda x + (1-\lambda) y\in F$ for some $x,y\in K$ and $\lambda\in [0,1]$ implies $x,y\in F$. An \emph{exposed face} $F$ is a face of $K$ such that\ there exists a hyperplane $H$ with $F = K\cap H$. A cone $K$ is called \emph{line-free} (or \emph{pointed}) iff $K$ does not contain a line $x+y\cdot\rset$ with $x\in\rset^m$ and $y\in\rset^m\setminus\{0\}$. For $s\in\cS_\sA$ we define the \emph{normal cone} $\nor_\sA(s)$ by
\[\nor_\sA(s) := \{v\in\rset^m \,|\, H_v\ \text{is a supporting hyperplane of}\ \cS_\sA\ \text{at}\ s\}.\]
It is a closed convex cone, see e.g.\ \cite{solta15}. Finally, since our moment cone $\cS_\sA$ is not necessarily closed, it is necessary to distinguish between the boundary $\partial\cS_\sA$ of $\cS_\sA$ and the boundary points $\partial^*\cS_\sA$ belonging to $\cS_\sA$, that is, $\partial^*\cS_\sA := \partial\cS_\sA\cap \cS_\sA$.
\smallskip

The following  lemma  states that any proper exposed face ($\neq\cS_\sA$) of the moment cone $\cS_\sA$ is again a  moment cone $\cS_\sB$ on a subset $\cZ\subsetneq\cX$ and a basis $\sB=\{b_1,\dots,b_k\}\subset\lin\sA$ with $k<m$.

\begin{lem}\label{lem:boundaryreduction}
Let $H_v$ be a supporting hyperplane of $\cS_\sA$. Then $\cS_\sA\cap H_v$ is a  moment cone on $(\cZ,\fA|_\cZ)$ with $\cZ := \cZ(\langle v,s_\sA(\,\cdot\,)\rangle)$ and $\dim\cS_\sA\cap H_v < \dim\cS_\sA$.
\end{lem}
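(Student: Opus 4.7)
The plan is to re-encode the supporting hyperplane $H_v$ as a nonnegative function in $\lin\sA$, and then use the standard fact that the integral of a nonnegative function vanishes iff the measure is concentrated on its zero set. First I would set
\[p(x) := \langle v, s_\sA(x)\rangle = \sum_{i=1}^m v_i\, a_i(x)\in \lin\sA.\]
Since $s_\sA(x)\in\cS_\sA\subseteq H_v^+$ for every $x\in\cX$, we have $p(x)\geq 0$ on $\cX$, so $p\in\pos(\sA)$, and the measurability of $p$ puts $\cZ=\{x\in\cX:p(x)=0\}$ in $\fA$. This reduces the geometric hypothesis on $v$ to a concrete support condition.

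Next I would prove the identification $\cS_\sA\cap H_v = \cS_{\sA|_\cZ}$ by two inclusions, where $\sA|_\cZ=\{a_1|_\cZ,\dots,a_m|_\cZ\}$ is considered on $(\cZ,\fA|_\cZ)$. For the forward inclusion, take $s\in\cS_\sA\cap H_v$ and any $\mu\in\fM_\sA(s)$. Then
\[0 = \langle v, s\rangle = \sum_{i=1}^m v_i\int a_i\, d\mu = \int p\, d\mu,\]
and since $p\geq 0$ this forces $\mu(\cX\setminus\cZ)=0$. The restriction $\mu|_\cZ$ therefore belongs to $\fM_{\sA|_\cZ}$ and represents $s$, so $s\in\cS_{\sA|_\cZ}$. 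For the converse, any $\tilde\mu\in\fM_{\sA|_\cZ}$ representing some $s$ extends trivially via $\mu(A):=\tilde\mu(A\cap\cZ)$ for $A\in\fA$ to a measure in $\fM_\sA(s)$, and since $p\equiv 0$ on $\cZ$ we get $\langle v,s\rangle=\int p\, d\mu = 0$, i.e.\ $s\in H_v$.

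For the dimension statement I would invoke assumption $(*)$ together with Lemma \ref{lem:linearindependentA}, which yields $\dim\cS_\sA=m$. Since $H_v$ is a hyperplane we have $v\neq 0$ and $\dim H_v=m-1$, so $\dim(\cS_\sA\cap H_v)\leq m-1 <m=\dim\cS_\sA$. The only subtle point in the whole argument is the measure-theoretic implication $\int p\, d\mu=0 \Rightarrow \mu(\{p>0\})=0$ for $p\geq 0$; this is standard but is the conceptual heart of the proof, translating the convex-geometric boundary condition into a support restriction on every representing measure. If one additionally wants the reformulation in terms of a linearly independent set $\sB\subset\lin\sA$ as announced in the text preceding the lemma, one chooses a basis $\sB=\{b_1,\dots,b_k\}$ of $\lin(\sA|_\cZ)$; then $k<m$ (because the relation $\sum_i v_ia_i|_\cZ=0$ is nontrivial by $v\neq 0$ and $(*)$), and $\cS_{\sA|_\cZ}$ is linearly isomorphic to $\cS_\sB$ via the transition matrix between $\sA|_\cZ$ and $\sB$.
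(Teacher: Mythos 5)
Your proof is correct and follows essentially the same route as the paper's: both identify the supporting functional with the nonnegative function $p=\langle v,s_\sA(\,\cdot\,)\rangle\in\pos(\sA)$ and use that $\int p\,d\mu=0$ forces every representing measure to be concentrated on $\cZ(p)$ (the paper merely normalizes $v=(0,\dots,0,1)$ by a basis change first, so that $p=a_m$). Your version spells out the two inclusions and the trivial extension of measures more explicitly, but the substance is identical.
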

\begin{proof}
It is clear that $\dim\cS_\sA\cap H_v < \dim\cS_\sA$. By a basis change in the vector space $\lin \sA$  we can assume without loss of generality that $v=(0,\dots,0,1)$, so that $a_m \geq 0$. Therefore, $s=(s_1,\dots,s_m)\in\cS_\sA\cap H_v$ implies $s_m=0$ and
\[0 = s_m = \int a_m(y)~\diff\mu(y) \quad\forall\mu\in\fM_\sA(s) \quad\forall s\in\cS_\sA\cap H_v.\]
Then $\cZ = \cZ(a_m)$ and $\fA|_\cZ = \{M\cap\cZ \,|\, M\in\fA\}$. 
For any $x\not\in\cZ$ we have $a_m(x)\neq 0$, so $s_m\neq 0$, and $s_\sA(x) = s\not\in\cS_\sA\cap H_v$. Hence $\cS_\sA\cap H_v$ is the  moment cone on $(\cZ,\fA\cap\cZ)$.
\end{proof}

One might also ask which sequences are obtained by allowing signed atomic measures. The next proposition  (see \cite[p.\ 1608]{didio17Cara}) says that each vector $s\in\rset^m$ is a signed moment sequence and that a representing signed measure can easily be calculated by using  linear algebra.

\begin{prop}\label{prop:signedMeasures}
There exist points $x_1,\dots,x_m\in\cX$ such that\ every vector $s\in\rset^m$ has a $k$-atomic signed representing measure with $k\leq m$ and all atoms are from the set $\{x_1,\dots,x_m\}$, that is, $s\in\rset^m$ is  a signed moment sequence.
\end{prop}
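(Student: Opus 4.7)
The plan is to exploit assumption $(*)$ to reduce the statement to a pure linear-algebra fact: that one can select $m$ points in $\cX$ whose images under $s_\sA$ form a basis of $\rset^m$.

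First, by Lemma \ref{lem:linearindependentA} applied to $(*)$, the cone $\cS_\sA$ is $m$-dimensional, and a fortiori the linear span of $s_\sA(\cX) = \{s_\sA(x) : x\in\cX\}$ equals all of $\rset^m$. (If it were contained in a proper subspace $V$, then also $\cS_\sA \subseteq V$, contradicting $\dim\cS_\sA = m$.) I would then build $x_1,\dots,x_m\in\cX$ inductively: pick $x_1\in\cX$ with $s_\sA(x_1)\neq 0$, and, given $x_1,\dots,x_k$ with linearly independent images and $k<m$, use that $\lin\, s_\sA(\cX) = \rset^m$ to find $x_{k+1}\in\cX$ such that $s_\sA(x_{k+1})\notin \lin\{s_\sA(x_1),\dots,s_\sA(x_k)\}$. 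After $m$ steps, the vectors $s_\sA(x_1),\dots,s_\sA(x_m)$ form a basis of $\rset^m$, equivalently, the matrix
\[
M := \bigl[\,s_\sA(x_1)\;\big|\;\cdots\;\big|\;s_\sA(x_m)\,\bigr] = \bigl(a_i(x_j)\bigr)_{i,j=1}^m \in \rset^{m\times m}
\]
is invertible.

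Now given any $s\in\rset^m$, the system $Mc = s$ has a unique solution $c = (c_1,\dots,c_m)\in\rset^m$. Setting $J := \{j : c_j \neq 0\}$ and $k := |J|\leq m$, the signed measure
\[
\mu := \sum_{j\in J} c_j\,\delta_{x_j}
\]
lies in $\fM_\sA$ (since each $\delta_{x_j}\in\fM_\sA$ by Lemma \ref{lem:diracmeasure} and $\fM_\sA$ is closed under finite real linear combinations in the sense needed for integrability), consists of $k\leq m$ pairwise distinct atoms $\{x_j\}_{j\in J}$ with nonzero real weights $c_j$, and satisfies
\[
\int s_\sA(y)\,\diff\mu(y) = \sum_{j\in J} c_j\, s_\sA(x_j) = Mc = s.
\]
Hence $\mu$ is a $k$-atomic signed representing measure of $s$ with atoms drawn from $\{x_1,\dots,x_m\}$.

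There is essentially no hard step: once $(*)$ is used to produce a basis of $\rset^m$ inside $s_\sA(\cX)$, the rest is solving an $m\times m$ linear system. The only mild point worth being careful about is verifying that the inductive selection really terminates with a basis, which follows immediately from $\lin\, s_\sA(\cX)=\rset^m$; and that atoms with coefficient $0$ are simply discarded so that $k\leq m$ counts only the genuinely present atoms.
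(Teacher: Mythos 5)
Your proposal is correct and follows essentially the same route as the paper: assumption $(*)$ yields $m$ points whose images under $s_\sA$ form an invertible matrix, and the coefficients are obtained by solving the resulting $m\times m$ linear system. The extra details you supply (the inductive construction of the basis and the discarding of zero coefficients) are harmless elaborations of the paper's one-line argument.
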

\begin{proof}
Since the set $\sA$ of functions is linearly independent by the above assumption, there are points $x_1,\dots,x_m\in\cX$ such that\ the matrix $(s_\sA(x_1),\dots,s_\sA(x_m))\in\rset^{m\times m}$ has full rank. Therefore, for any $s\in\rset^m$, we have
\[s = c_1\cdot s_\sA(x_1) + \dots + c_m\cdot s_\sA(x_m) = \int s_\sA(y)~\diff\left(\sum_{i=1}^m c_i\cdot \delta_{x_i}\right)\]
with $(c_1,\dots,c_m)^T = (s_\sA(x_1),\dots,s_\sA(x_m))^{-1} s$.
\end{proof}

Thus, the crucial and hard part of the truncated moment problem is to find {\it positive} representing measures.

\section{The Moment Map and its Differential Structure}
\label{sec:mommap}

\begin{dfn}\label{dfn:momentCurveMap}
For  $k\in\nset$  the \emph{moment map $S_{k,\sA}$} is defined by
\[S_{k,\sA}: \rset_{\geq 0}^k\times\cX^k\rightarrow\rset^m,\ (C,X)\mapsto S_{k,\sA}(C,X) := \sum_{i=1}^k c_i\cdot s_\sA(x_i)\]
with $C = (c_1,\dots,c_k)$ and $X = (x_1,\dots,x_k)$.
\end{dfn}

More explicitly, the moment map $S_{k,\sA}$ has the form
\begin{equation}\label{eq:SkAmap}
S_{k,\sA}(C,X) = \begin{pmatrix}
c_1 a_1(x_1) + c_2 a_1(x_2) + \dots + c_k a_1(x_k)\\
\vdots\\
c_1 a_m(x_1) + c_2 a_m(x_2) + \dots + c_k a_m(x_k)
\end{pmatrix}\in\rset^m.
\end{equation}
The moment map $S_{k,\sA}$ is differentiable in all coefficients $c_i$. From (\ref{eq:SkAmap}) we obtain
\[\partial_{c_i} S_{k,\sA}(C,X) = \begin{pmatrix} a_1(x_i)\\ \vdots\\ a_m(x_i)\end{pmatrix} = s_\sA(x_i).\]
If in addition $\cX\subseteq\rset^n$ is open and all functions $a_i$ are differentiable, that is, $\sA\subseteq C^1(\cX,\rset)$, then $S_{k,\sA}$ is also differentiable in all coordinates $x_{i,j}$ of any atom $x_i = (x_{i,1}$, $\dots$, $x_{i,n})$ and from (\ref{eq:SkAmap}) we get
\[\partial_{x_{i,j}} S_{k,\sA}(C,X) = \begin{pmatrix} c_i \partial_j a_1(x_i)\\ \vdots\\ c_i\partial_j a_m(x_i)\end{pmatrix} = c_i \partial_j s_\sA(x_i),\]
where $\partial_j$ denotes the partial derivative with respect to the $j$-th coordinate. Then $S_{k,\sA}$ is differentiable  with respect to all $c_i$ and $x_{i,j}$ and  the total derivative is
\begin{equation}\begin{split}\label{eq:totalderivative}
&\ DS_{k,\sA}(C,X)\\ 
=&\ (\partial_{c_1}S_{k,{\sA}},\partial_{x_{1,1}}S_{k,{\sA}},\dots,\partial_{x_{1,n}}S_{k,{\sA}},\partial_{c_2}S_{k,{\sA}},\dots,\partial_{x_{k,n}}S_{k,{\sA}})\\
=&\ (s_{\sA}(x_1), c_1\partial_1 s_{\sA}|_{x=x_1},\dots,c_1\partial_n s_{\sA}|_{x=x_1},s_{\sA}(x_2),\dots,c_k \partial_n s_{\sA}|_{x=x_k})\\
=&\
\begin{pmatrix}
a_1(x_1) & c_1 \partial_1 a_1(x_1) & \dots & c_1 \partial_n a_1(x_1) & a_1(x_2) & \dots & c_k \partial_n a_1(x_k)\\
\vdots & \vdots & & \vdots & \vdots & & \vdots & \\
a_m(x_1) & c_1 \partial_1 a_m(x_1) & \dots & c_1 \partial_n a_m(x_1) & a_m(x_2) & \dots & c_k \partial_n a_m(x_k)
\end{pmatrix}.
\end{split}\end{equation}

Here we ordered the variables as $c_1,x_{1,1},\dots,x_{1,n},c_2,x_{2,1},\dots,x_{k,n}$, i.e., the first column is the derivative with respect to $c_1$, followed by the derivatives with respect to the coordinates $x_{1,1},\dots,x_{1,n}$ of the $1$st atom $x_1$, and so on. Since $DS_{k,\sA}$ is a matrix, we can calculate the rank of $DS_{k,\sA}(C,X)$ at some  atomic measure $(C,X)$.  As $k$ increases, there exists a $k\in \nset$ such that $DS_{k,\sA}(C,X)$ has rank $m$ at some $(C,X)$. This leads to the  definition of  the following important number.

\begin{dfn}\label{dfn:NA}
Suppose $\cX\subseteq\rset^n$ is open and $a_i\in C^1(\cX,\rset)$ for all $i$. We define
\[\cN_\sA := \min \{k\in\nset \,|\, \exists (C,X)\in\rset^k\times\cX^k: \rank DS_{k,\sA}(C,X) = m\}.\]
\end{dfn}

\begin{rem}\label{rem:manifold}
Instead of an open subset  $\cX$ of $\rset^n$, this definition and the following considerations can be extended almost verbatim to differentiable manifolds $\cX$. By choosing a chart $\varphi: U\subseteq\rset^n \rightarrow \cX$ of the manifold, $U$ open, the preceding definition makes sense for $\sA\circ\varphi = \{a_i\circ\varphi \,|\, i=1,\dots,m\}$. 
\end{rem}

The number $\cN_\sA$ is well-defined, because the total derivative $DS_{k,\sA}(C,X)$ contains the column vectors $s_\sA(x_1)$, $\dots$, $s_\sA(x_k)$. Since the functions $a_i$ are assumed to be linearly independent, we can find $x_1,\dots,x_m\in\cX$ s.t.\ $s_\sA(x_1),\dots,s_\sA(x_m)$ are linearly independent, i.e., $\cN_\sA \leq m$. Of course, the rank of $DS_{k,\sA}(C,X)$ does not depend on the numbers $c_i\neq 0$. Thus, we can set without loss of generality $c_i=1$ for all $i$, i.e., $C = (1,\dots,1) =:\one_k = \one$. We have the following lower bound.

\begin{prop}[{\cite[Prop.\ 23]{didio17Cara}}]
$\left\lceil\frac{m}{n+1}\right\rceil \leq \cN_\sA$.
\end{prop}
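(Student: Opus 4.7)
The plan is a direct dimension count on the Jacobian matrix $DS_{k,\sA}(C,X)$ displayed in (\ref{eq:totalderivative}). The map $S_{k,\sA}$ takes values in $\rset^m$, so its total derivative is an $m\times N$ matrix, where $N$ is the total number of scalar variables in $(C,X)\in\rset^k\times\cX^k\subseteq\rset^k\times(\rset^n)^k$. Each atom $x_i$ contributes one mass variable $c_i$ and $n$ position variables $x_{i,1},\dots,x_{i,n}$, so $N=k(n+1)$.

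From elementary linear algebra, the rank of an $m\times k(n+1)$ matrix is at most $\min\{m,\,k(n+1)\}$. Hence $\rank DS_{k,\sA}(C,X)=m$ forces
\[
k(n+1)\;\geq\;m,\qquad\text{i.e.},\qquad k\;\geq\;\frac{m}{n+1}.
\]
Since $k$ is an integer, this is equivalent to $k\geq\lceil m/(n+1)\rceil$. Taking the minimum over all admissible $k$ in Definition \ref{dfn:NA} yields the claimed inequality $\lceil m/(n+1)\rceil\leq\cN_\sA$.

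There is essentially no hard step: the bound is a counting argument on the number of differentiation variables versus the ambient dimension $m$ of the moment cone. The only thing worth noting is that the formula (\ref{eq:totalderivative}) must be read with all $k(n+1)$ columns present (one column per parameter), and this is explicit in the display given just before Definition \ref{dfn:NA}. No genericity, linear independence of $\sA$, or differentiability beyond what is already assumed enters the proof of the lower bound; those hypotheses only become relevant when one tries to match the bound from above.
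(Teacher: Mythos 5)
Your proof is correct and is essentially the same argument as in the cited reference: $DS_{k,\sA}(C,X)$ is an $m\times k(n+1)$ matrix, so full rank $m$ forces $k(n+1)\geq m$, hence $k\geq\lceil m/(n+1)\rceil$ for every $k$ admissible in Definition \ref{dfn:NA}. Nothing is missing.
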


We illustrate the preceding by an example (taken from {\cite[Exm.\ 50]{didio17Cara}}).

\begin{exm}\label{exm:simpleexampleforNAanduniqueness}
Let $\sA = \sA_{2,2}$. Then
\[DS_{2,\sA}(C,X) = \begin{pmatrix}
1  & 0 & 0 & 1 & 0 & 0\\
x_{1,1} & c_1 & 0 & x_{2,1} & c_2 & 0\\
x_{1,2} & 0 & c_1 & x_{2,2} & 0 & c_2\\
(x_{1,1})^2 & 2 c_1 x_{1,1} & 0 & (x_{2,1})^2 & 2 c_2 x_{2,1} & 0\\
x_{1,1} x_{1,2} & c_1 x_{1,2} & c_1 x_{1,1} & x_{2,1} x_{2,2} & c_2 x_{2,2} & c_2 x_{2,1}\\
(x_{1,2})^2  & 0 & 2 c_1 x_{1,2} &(x_{2,2})^2  & 0 & 2 c_2 x_{2,2}
\end{pmatrix},\]
where $C=(c_1,c_2)$ and $X=(x_1,x_2)$,  $x_i = (x_{i,1},x_{i,2})$. From this we find that
\[\ker DS_{2,\sA}(C,X) = \rset\cdot 
v(C,X) \qquad\text{with}\qquad v(C,X):=\begin{pmatrix}
-2\\
c_1^{-1}(x_{1,1}-x_{2,1})\\
c_1^{-1}(x_{1,2}-x_{2,2})\\
2\\
c_2^{-1}(x_{1,1}-x_{2,1})\\
c_2^{-1}(x_{1,2}-x_{2,2})
\end{pmatrix}.\]
Hence  $\rank DS_{2,\sA_{2,2}} = 5$ for each point $(x_1,x_2),$  $x_1\neq x_2$. In order to get full rank $6$ we need a third atom. Therefore, it follows that $\cN_\sA = 3$.
\end{exm}

Determining the number  $\cN_\sA$ for general functions requires explicit calculations. But in the polynomial case (that is, for $\sA = \sA_{n,d}$ or $\sB_{n,d}$ on $\rset^n$, $\pset^n$ or on an open subset) the Alexander--Hirschowitz Theorem \cite{alexa95} gives the following important result.

\begin{thm}[{\cite[Thm.\ 53]{didio17Cara}}]\label{thm:NApolynomial}
We have
\begin{equation}\label{eq:NAforAllHomPoly}
\cN_{\sA_{n,d}} = \left\lceil \frac{1}{n+1}\begin{pmatrix}n+d\\ n\end{pmatrix} \right\rceil,
\end{equation}
except for the following five cases:
\begin{enumerate}[i)]
\item $d=2$: $\cN_{\sA_{n,2}} = n+1$.
\item $n=4$, $d=3$: $\cN_{\sA_{4,3}} = 8$.
\item $n=2$, $d=4$: $\cN_{\sA_{2,4}} = 6$.
\item $n=3$, $d=4$: $\cN_{\sA_{3,4}} = 10$.
\item $n=d=4$: $\cN_{\sA_{4,4}} = 15$.
\end{enumerate}
\end{thm}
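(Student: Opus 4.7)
The plan is to identify $\cN_{\sA_{n,d}}$ with a classical invariant of the Veronese variety and then invoke the Alexander--Hirschowitz theorem. The moment curve $s_{\sA_{n,d}}$ is, up to dehomogenization in one coordinate, the affine chart of the Veronese embedding $\nu_d\colon \pset^n \hookrightarrow \pset^{m-1}$, where $m=\binom{n+d}{n}$, and the moment map $S_{k,\sA_{n,d}}$ from Definition \ref{dfn:momentCurveMap} parametrizes the $k$-secant combinations of its image. By Definition \ref{dfn:NA}, $\cN_{\sA_{n,d}}$ is the smallest $k$ for which this parametrization admits a point where $DS_{k,\sA_{n,d}}$ attains rank $m$.

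First I would apply Terracini's lemma together with the explicit formula (\ref{eq:totalderivative}): the column span of $DS_{k,\sA_{n,d}}(C,X)$ is exactly the linear sum of the affine tangent spaces to the (affine) Veronese variety at the points $s_{\sA_{n,d}}(x_i)$, for $i=1,\dots,k$. Hence $DS_{k,\sA_{n,d}}$ reaches rank $m$ at some $(C,X)$ if and only if the $k$-th secant variety $\sigma_k(\nu_d(\pset^n))$ fills $\pset^{m-1}$. The passage from the projective/complex formulation to the real affine setting required by Definition \ref{dfn:NA} is routine: the maximal secant dimension is attained on a Zariski-open subset which intersects $\rset_{>0}^k\times\rset^{kn}$, so the maximal real rank of $DS_{k,\sA_{n,d}}$ coincides with the generic complex rank of the parametrization of $\sigma_k(\nu_d(\pset^n))$.

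Next I would feed this equivalence into the Alexander--Hirschowitz theorem, which computes the dimension of $\sigma_k(\nu_d(\pset^n))$ in all cases. In the generic (non-exceptional) case this dimension equals the expected value $\min\{k(n+1)-1,m-1\}$, so the smallest $k$ with full rank is $\lceil m/(n+1)\rceil$, yielding (\ref{eq:NAforAllHomPoly}). For each of the five exceptional families I would then compute the actual dimension and determine the first $k$ for which it reaches $m-1$. For $d=2$ the secant variety $\sigma_k(\nu_2(\pset^n))$ has defect $\binom{k}{2}$ for $k\leq n$, and solving $k(n+1)-\binom{k}{2}\geq\binom{n+2}{2}$ gives the minimum $k=n+1$. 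For each sporadic triple $(n,d,k)\in\{(4,3,7),(2,4,5),(3,4,9),(4,4,14)\}$ the defect equals $1$ at the expected step $\lceil m/(n+1)\rceil$, so $\cN_{\sA_{n,d}}$ increases by exactly one, producing the values $8$, $6$, $10$, $15$, respectively.

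The main obstacle is the Alexander--Hirschowitz theorem itself, which is deep and must be cited as a black box. The only work on our side is the translation (via Terracini's lemma and equation (\ref{eq:totalderivative})) between the differential-geometric definition of $\cN_{\sA_{n,d}}$ and the projective-algebraic formulation in terms of secant varieties, together with the arithmetic verification that $\lceil m/(n+1)\rceil$ and the five corrections match the listed values.
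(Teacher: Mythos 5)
Your proposal is correct and follows essentially the same route as the paper: the paper states explicitly that this result is obtained from the Alexander--Hirschowitz theorem (via \cite[Thm.\ 53]{didio17Cara}), and the cited proof is precisely the Terracini-type identification of the column span of $DS_{k,\sA_{n,d}}$ with a sum of tangent spaces to the Veronese variety, followed by the secant-dimension computation and the check of the five exceptional cases. Your arithmetic for the exceptions (the quadratic Veronese defect $\binom{k}{2}$ forcing $k=n+1$, and the defect-one sporadic cases each shifting the answer up by one) matches the stated values.
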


Example \ref{exm:simpleexampleforNAanduniqueness} is just one of the exceptional cases of the Alexander--Hirschowitz Theorem which shows that $\cN_\sA$ is not given by (\ref{eq:NAforAllHomPoly}) in general.

\section{Richter's Theorem}
\label{sec:richter}

The following theorem due to Hans Richter (1957) is probably the most important general result on truncated moment problems and it is the starting point for many problems as well. For this reason  we include the proof of Richter \cite{richte57} rewritten in terms of convex analysis, see also \cite[Thm.\ 1.24]{schmudMomentBook}.  By Remark \ref{rem:finiteais} we can assume that all measurable functions have finite values.

\begin{thm}[Richter  {\cite[Satz 4]{richte57}}]\label{thm:richter}
Let $\sA =\{a_1,\dots,a_m\}$ be (finite) measurable functions on a measurable space $(\cX,\fA)$. Then every moment sequence $s\in\cS_\sA$ has a $k$-atomic representing measure with at most $k\leq m$ atoms. Thus, 
\begin{equation}\label{eq:atomicrepresentation}
\cS_\sA = \range S_{m,\sA} = \conv \cone s_\sA(\cX).
\end{equation}
If $s$ is a boundary point of $\cS_\sA$, then $m-1$ atoms are sufficient.
\end{thm}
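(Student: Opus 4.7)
The plan is to identify $\cS_\sA$ with $\cone\, s_\sA(\cX)$ and then invoke Carath\'eodory's theorem for convex cones in $\rset^m$. The inclusion $\cone\, s_\sA(\cX)\subseteq\cS_\sA$ follows from Lemma~\ref{lem:diracmeasure}, so the content is the reverse inclusion, namely that every moment sequence is already a conic combination of vectors $s_\sA(x)$.

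For this I would split into an interior case and a boundary case. If $s\in\inter\cS_\sA$, I observe that the dual cone of $\cone\, s_\sA(\cX)\subseteq\rset^m$ is, under the identification $v\leftrightarrow\sum_i v_i a_i$, precisely $\pos(\sA)$. Combined with the identification $\overline{\cS_\sA}\cong\pos(\sA)^\wedge$ recorded just before the theorem, the bipolar theorem yields $\overline{\cS_\sA}=\overline{\cone\, s_\sA(\cX)}$. Because both sets are convex, they share the same interior, and hence $\inter\cS_\sA=\inter\cone\, s_\sA(\cX)\subseteq\cone\, s_\sA(\cX)$.

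For $s\in\partial^*\cS_\sA$ I would induct on $m=\dim\lin\sA$. The supporting hyperplane theorem supplies a nonzero $v$ with $H_v$ supporting $\cS_\sA$ at $s$, and Lemma~\ref{lem:boundaryreduction} identifies $\cS_\sA\cap H_v$ with a moment cone $\cS_\sB$ on $(\cZ,\fA|_\cZ)$, where $\cZ=\cZ(\langle v,s_\sA(\,\cdot\,)\rangle)$ and $\sB\subset\lin\sA$ is linearly independent of strictly smaller size $k<m$. Since any $\mu\in\fM_\sA(s)$ satisfies $0=\langle v,s\rangle=\int\langle v,s_\sA(x)\rangle\,\diff\mu$ with nonnegative integrand, $\mu$ is in fact supported on $\cZ$, so the hypothesis of the lemma really is met at $s$. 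The inductive hypothesis applied to $s\in\cS_\sB$ then produces a representing measure with at most $k\leq m-1$ atoms lying in $\cZ\subseteq\cX$, which is the sharper bound claimed for boundary points.

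Combining the two cases, every $s\in\cS_\sA$ lies in $\cone\, s_\sA(\cX)$, and Carath\'eodory's theorem for convex cones in $\rset^m$ represents $s$ as a conic combination of at most $m$ of the vectors $s_\sA(x)$, giving the required $k$-atomic representing measure with $k\leq m$. The delicate step I anticipate is the bipolar computation: one must convert carefully between the duality on $\rset^m$ and the pairing on $(\lin\sA)^*$ and verify that $\cS_\sA$ and $\cone\, s_\sA(\cX)$ genuinely have the same polar. A secondary bookkeeping point is ensuring Lemma~\ref{lem:boundaryreduction} delivers a \emph{strict} drop in dimension, so the induction (with base case $m=1$, where $\cS_\sA$ is a closed ray or the whole line and the claim is immediate) terminates.
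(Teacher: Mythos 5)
Your argument is correct and follows essentially the same route as the paper: a separation/duality argument showing that the interior of $\cS_\sA$ already consists of finitely-atomic sequences, combined with Lemma~\ref{lem:boundaryreduction} and induction on $m = \dim\lin\sA$ for boundary points. The only cosmetic difference is that you package the separation step as a bipolar-theorem computation (both $\cS_\sA$ and $\conv\cone s_\sA(\cX)$ have polar cone $\pos(\sA)$, via (\ref{eq:aposimpliesIntegralPos}) and Dirac measures) and extract the bound $k\leq m$ from Carath\'eodory's theorem for cones, whereas the paper separates a putative unrepresented interior point directly from $\inter\range S_{m,\sA}$ and reads the bound off the definition of the moment map $S_{m,\sA}$.
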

\begin{proof}
We proceed  by induction on $m$. By Lemma \ref{lem:linearindependentA} we can assume that the set $\sA$ is linearly independent.

For \underline{$m=1$}, let $s\in\cS_\sA\subseteq\rset$ and $s\neq 0$. Since $s = \int a_1(x)~\diff\mu(x)\neq 0$ there is because of (\ref{eq:aposimpliesIntegralPos}) an $x\in\cX$ such that $a_1(x)\neq 0$ and $\sign\, s = \sign\, a_1(x)$. Then $c := s\cdot a_1(x)^{-1} > 0$ and
\[s = s\cdot a_1(x)^{-1}\cdot a_1(x) = c\cdot a_1(x) = \int a_1(y)~\diff(c\cdot\delta_x)(y),\]
i.e., $c\cdot\delta_x$ is a 1-atomic representing measure of $s$.

Let  \underline{$m>1$}. Suppose    the assertion of the theorem holds for all $k=1,\dots,m-1$. By Lemma \ref{lem:linearindependentA} $\cS_\sA$ and $\range S_{m,\sA}$ have non-empty interior. First we show that $\inter\cS_\sA = \inter\range S_{m,\sA}$.

Assume to the contrary that $\inter\cS_\sA \neq \inter\range S_{m,\sA}$. Then, since  $\range S_{m,\sA} \subseteq \cS_\sA$ and both sets are convex cones, $\inter\cS_\sA\setminus\inter\range S_{m,\sA}$ has inner points. Let $s$ be such an inner point with representing measure $\mu$. Then there exists a separating linear functional $l(\,\cdot\,) = \langle \vec{l},\,\cdot\,\rangle$ such that\ $l(s) < 0$ and $l(t) > 0$ for all $t\in\inter\range S_{m,\sA}$, i.e., $0\leq l(s_\sA(x)) = \langle \vec{l}, s_\sA(x)\rangle =: a(x)$ for all $x\in\cX$. In other words,
\begin{equation}\label{eq:richtercontradiction}
a(x) \geq 0\ \text{on}\ \cX \;\;\text{but}\;\; \int a(x)~\diff\mu(x) = \int \langle \vec{l}, s_\sA(x)\rangle~\diff\mu(x) = \langle \vec{l}, s\rangle = l(s) < 0.
\end{equation}
This is a contradiction to (\ref{eq:aposimpliesIntegralPos}) and therefore we have $\inter\cS_\sA = \inter\range S_{m,\sA}$. This proves the assertion for inner points $s$ of $\cS_\sA$.

If $s$ is a boundary point of the moment cone $\cS_\sA$, $s$ is contained in an exposed face of the cone $\cS_\sA$. By Lemma \ref{lem:boundaryreduction}, this is again a moment cone of dimension $k\leq m-1$ and the assertion follows from the induction hypothesis.
\end{proof}

Thus, by Theorem \ref{thm:richter}, any moment sequence $s\in\cS_\sA$ can be written as
\begin{equation}\label{repspoints}
s = \sum_{i=1}^k c_i\cdot s_\sA(x_i) = \int_\cX s_\sA(x)~\diff\!\!\left(\sum_{i=1}^k c_i\cdot\delta_{x_i}\right)\!\!(x)
\end{equation}
for some points $x_1,\dots,x_k\in\cX$, positive real numbers $c_1,\dots,c_k$, and $k\leq m$.

Requiring that $(\cX,\fA)$ is a measurable space and $\sA$ are measurable functions is necessary to define integration. $\{x\}\in\fA$ is not required. On the other side, a number of generalizations can be made for $s_\sA(x)\in\rset^m$. We can replace $\rset^m$ by any finite dimensional vector space $V$ (for instance, matrices or tensors) and $\sA = \{a_1,\dots,a_m\}$ by the coordinate functions of any basis $v_1,\dots,v_m$: $v = a_1 v_1 + \dots + a_m v_m\in V$.

It should be emphasized that Theorem \ref{thm:richter} holds for arbitrary measurable spaces $(\cX,\fA)$ and measurable functions $\sA$, see Example \ref{exm:measurablespace} for $\{x\}\not\in\fA$ for all $x\in\cX$. We illustrate its use by another example.

\begin{exm} 
Let $s$ be a moment sequence on a measure space $(\cX,\fA)$ with representing measure $\mu$. If $M\in\fA$ such that $\mu(M)=0$, then it follows from Richter's Theorem (Theorem \ref{thm:richter}), applied to $\cX\setminus M$, that the points $x_i$ in (\ref{repspoints}) can be chosen from the set $\cX\backslash M$. For instance, suppose  $\mu$ is the Lebesgue measure of a closed set $\cX$ of $\rset^m$. If the Lebesgue measure of the boundary of $\cX$ is zero,  we can take the atoms $x_i$ from the interior of $\cX$. Also, if $M$ is  a finite subset of $\cX$, we  can choose the atoms outside $M$.
\end{exm}

The history of Richter's Theorem \ref{thm:richter} is confusing and intricating and often the corresponding references in the literature are misleading. For this reason we take this opportunity to discuss this history in detail. First we collect several versions of Theorem \ref{thm:richter} occurring in the literature in chronological order.
\begin{enumerate}[\bf A)]
\item \textbf{A.\ Wald 1939
\cite[Prop.\ 13]{wald39}:}
Let $\cX = \rset$ and $a_i(x) := |x - x_0|^{d_i}$ with $d_i\in\nset_0$ and $0\leq d_1 < d_2 < \dots < d_m < \infty$ for an $x_0\in\cX$. Then (\ref{eq:atomicrepresentation}) holds.

\item\label{item:rosenbloom} \textbf{P.\ C.\ Rosenbloom 1952 \cite[Cor.\ 38e]{rosenb52}:}
Let $(\cX,\fA)$ be a measurable space and $a_i$ bounded measurable functions. Then (\ref{eq:atomicrepresentation}) holds.

\item \textbf{H.\ Richter 1957\footnote{Received: December 27, 1956. Published: April, 1957.} \cite[Satz 4]{richte57}:}
Let $(\cX,\fA)$ be a measurable space and let $a_i$ be measurable functions. Then (\ref{eq:atomicrepresentation}) holds.

\item\label{item:tchakaloff} \textbf{M.\ V.\ Tchakaloff 1957\footnote{Published: July-September, 1957.} \cite{tchaka57}:}
Let $\cX\subset\rset^n$ be compact and $a_\alpha(x) = x^\alpha$, $|\alpha|\leq d$. Then (\ref{eq:atomicrepresentation}) holds.

\item \textbf{W.\ W.\ Rogosinski 1958\footnote{Received: August 22, 1957. Published: May 6, 1958.} \cite[Thm.\ 1]{rogosi58}:}
Let $(\cX,\fA)$ be a measurable space and let $a_i$ be measurable functions. Then (\ref{eq:atomicrepresentation}) holds.
\end{enumerate}
From this list we see that Tchakaloff's result \ref{item:tchakaloff}) from 1957 is a special case of Rosenbloom's result \ref{item:rosenbloom}) from 1952 and that the general case was proved by Richter and Rogosinski almost about at the same time, see the exact dates in the footnotes.  If one reads  Richter's paper, one might think at first glance that he treats only the one-dimensional case, but a closer look reveals that his Proposition (Satz) 4 covers actually the  general case of measurable functions. Rogosinski treats the one-dimensional case, but he also states that his proof works for general measurable spaces. The above proof of Theorem \ref{thm:richter}, and likewise the one in \cite[Thm.\ 1.24]{schmudMomentBook}, are nothing but modern formulations of the proofs of Richter and Rogosinski without additional arguments. Note that Rogosinki's paper \cite{rogosi58} was submitted about a half year after the appearance of Richter's \cite{richte57}.

It might be of interest to note that the general results of Richter and Rogosinski can be easily derived from Rosenbloom's Theorem by the following simple trick. Let $\sA = \{a_1,\dots,a_m\}$ be (finite) measurable functions on $(\cX,\fA)$ and set $\sB = \{b_1,\dots,b_m\}$, where $b_i := \frac{a_i}{f}$ with $f := 1 + \sum_{i=1}^m a_i^2$. Then
\begin{align*}
s\in\cS_\sB \;&\Leftrightarrow\; \exists\nu\in\fM_\sB: s = \int s_\sB(x)~\diff\nu(x) \gdw s = \int \frac{s_\sA(x)}{f(x)}~\diff\nu(x) = \int s_\sA(x)~\diff\mu(x)\\
&\Leftrightarrow\; \exists\mu\in\fM_\sA: s = \int s_\sA(x)~\diff\mu(x) \gdw s\in\cS_\sA \quad\text{with}\quad \diff\mu = f^{-1}~\diff\nu.
\end{align*}
Since all functions $b_i$ are bounded, Rosenbloom's Theorem applies to $\sB$, so each sequence $s\in\cS_\sB = \cS_\sA$ has a $k$-atomic representing measure $\nu\in\fM_\sB(s)$ with $k\leq m$ and scaling by $f^{-1}$ yields a $k$-atomic representing measure $\mu\in\fM_\sA(s)$:
\[s = \sum_{i=1}^k c_i\cdot s_\sB(x_i) = \sum_{i=1}^k \frac{c_i}{f(x_i)}\cdot s_\sA(x_i).\]

Theorem \ref{thm:richter} was overlooked  in the modern literature on truncated polynomial moment problems. It was  reproved in several papers in weaker forms and finally  in the polynomial case in \cite{bayer06}. But Theorem \ref{thm:richter} for general measurable functions was known and cited by J.\ H.\ B.\ Kemperman in \cite[Thm.\ 1]{kemper68} and attributed therein to Richter and Rogosinski. In the moment problem community succeeding Kemperman the general form of Theorem \ref{thm:richter} was often used, see e.g.\ \cite[eq.\ (2.3)]{kemper71}, \cite[p.\ 29]{kemper87}, \cite[Thm.\ 1, p.\ 198]{floudas01}, \cite[Thm.\ 1]{anast06}, and \cite[Thm.\ 2.50]{lasserreSemiAlgOpt}.

In \cite{schmudMomentBook} Theorem \ref{thm:richter} was called Richter--Tchakaloff Theorem.  After the preceding discussion and comparing the proofs and the precise  publication data we are convinced that it is  fully justified to use the name \emph{Richter Theorem}. If one wants to take the broader history of this result into account, it might be also fair and appropriate to call it  Richter--Rogosinski--Rosenbloom Theorem.

\section{Facial Structure of the Moment Cone $\cS_\sA$ and Set of Atoms}
\label{sec:face}

Let $s$ denote a fixed sequence of the moment cone $\cS_\sA$.

\begin{dfn}
The \emph{set of atoms} of $s$ is the set
\[\cW(s) := \{x\in\cX \,|\, \exists \mu\in\fM_\sA(s) : \mu(A_x) > 0\}\]
with $A_x := s_\sA^{-1}(s_\sA(x))\equiv \{y\in \cX \,|\, s_\sA(y)=s_\sA(x)\}$.
\end{dfn}

The name \emph{set of atoms} for $\cW(s)$ is justified by the following lemma.

\begin{lem}\label{lem:setofatoms}
\[x\in\cW(s) \quad\Leftrightarrow\quad s = c\cdot s_\sA(x) + \sum_{i=1}^k c_i\cdot s_\sA(x_i) \quad\text{with}\quad x_i\in\cX,\ c>0,c_i > 0.\]
\end{lem}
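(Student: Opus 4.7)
The plan is to prove each direction directly, using Richter's Theorem for the nontrivial direction.

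For the easy implication $(\Leftarrow)$, suppose $s = c\cdot s_\sA(x) + \sum_{i=1}^k c_i\cdot s_\sA(x_i)$ with $c>0$ and $c_i>0$ (allowing $k=0$). Then the measure $\mu := c\cdot\delta_x + \sum_{i=1}^k c_i\cdot\delta_{x_i}$ lies in $\fM_\sA(s)$. Since $s_\sA(x)=s_\sA(x)$ trivially, we have $x\in A_x$, hence $\delta_x(A_x)=1$ and therefore $\mu(A_x)\geq c>0$. Thus $x\in\cW(s)$.

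For the nontrivial implication $(\Rightarrow)$, suppose $x\in\cW(s)$, so there exists $\mu\in\fM_\sA(s)$ with $\mu(A_x)>0$. Note that $A_x = s_\sA^{-1}(\{s_\sA(x)\})$ belongs to $\fA$, since $s_\sA:\cX\to\rset^m$ is measurable (all $a_i$ are) and $\{s_\sA(x)\}$ is a Borel subset of $\rset^m$. Set $c := \mu(A_x) > 0$ and decompose $\mu = \mu|_{A_x} + \mu|_{\cX\setminus A_x}$. Because $s_\sA(y)=s_\sA(x)$ for every $y\in A_x$, we compute
\[\int_{A_x} s_\sA(y)~\diff\mu(y) = \mu(A_x)\cdot s_\sA(x) = c\cdot s_\sA(x).\]
Hence $t := s - c\cdot s_\sA(x) = \int_\cX s_\sA(y)~\diff\mu|_{\cX\setminus A_x}(y)$ belongs to $\cS_\sA$.

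To conclude, I apply Richter's Theorem (Theorem \ref{thm:richter}) to the moment sequence $t\in\cS_\sA$: it admits a $k$-atomic representation $t = \sum_{i=1}^k c_i\cdot s_\sA(x_i)$ with $k\leq m$, points $x_i\in\cX$, and weights $c_i > 0$ (with $k=0$ interpreted as $t=0$). Substituting back yields the required decomposition
\[s = c\cdot s_\sA(x) + \sum_{i=1}^k c_i\cdot s_\sA(x_i).\]

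The main obstacle is essentially bookkeeping: one must verify that $A_x\in\fA$ so that the restriction $\mu|_{\cX\setminus A_x}$ makes sense and defines a positive measure in $\fM_\sA$, and one must invoke Richter's Theorem for the residual moment sequence rather than attempting a direct atomic construction. Apart from these points, the argument is a straightforward splitting of the representing measure.
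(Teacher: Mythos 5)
Your proof is correct and follows essentially the same route as the paper: split the representing measure over $A_x$ and its complement, observe that the integral over $A_x$ equals $\mu(A_x)\cdot s_\sA(x)$, and apply Richter's Theorem to the residual moment sequence. The only addition is your explicit check that $A_x\in\fA$, which the paper leaves implicit.
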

\begin{proof}
$\Leftarrow$: Clear.

$\Rightarrow$: Let $\mu\in\fM_\sA(s)$ such that $\mu(A_x) > 0$. Then
\[s = \int_\cX s_\sA(y)~\diff\mu(y) = \int_{A_x} s_\sA(y)~\diff\mu(y) + \int_{\cX\setminus A_x} s_\sA(y)~\diff\mu(y) = \mu(A_x) s_\sA(x) + s'\]
with $s' = \int_{\cX\setminus A_x} s_\sA(y)~\diff\mu(y)$. Hence $s'\in\cS_\sA$ with representing measure $\mu|_{\cX\setminus A_x}$. By Richter's Theorem (Theorem \ref{thm:richter}), $s'$ has  an atomic representing measure which gives the assertion.
\end{proof}

The next definition introduces two other fundamental notions, the cone $\cN(s)$  and the set $\cV(s)$.

\begin{dfn}\ \vspace{-0.55cm}
\begin{align*}
\cN(s) &:= \{ p\in\pos(\sA) \,|\, L_s(p) = \langle\vec{p},s\rangle = 0\},\\
\cV(s) &:= \bigcap_{p\in\cN(s)} \cZ(p).
\end{align*}
\end{dfn}

Basic properties of these concepts are collected in the next propositions. Note that in the important case when $\sA$ consists of polynomials and $\cX=\rset^d$, $\cV(s)$ is a real zero set of polynomials and hence a real algebraic set, see Proposition \ref{prop:onep}.

\begin{prop}
\begin{enumerate}[i)]
\item $p\in\pos(\sA) \gdw \cS_\sA\subseteq H_{\vec{p}}^+$.
\item $p\in\cN(s) \gdw \cS_\sA\subseteq H_{\vec{p}}^+ \und s\in H_{\vec{p}} \gdw \vec{p}\in\nor_\sA(s)$.
\item Let $p\in\pos(\sA)$. Then $L_s(p) = \langle \vec{p}, s\rangle= 0 \folgt \cW(s)\subseteq\cZ(p)$.
\item $\cW(s)\subseteq\cV(s)$.
\item There exists a  $p\in\pos(\sA)$ with $\cV(s) = \cZ(p)$.
\item Suppose that there exist a function $ e\in\lin\sA$ s.t.\ $e(x)>0$ on $\cX$. Then: $0\in\fM_\sA(s) \gdw s=0 \gdw \cW(s)=\emptyset$.
\end{enumerate}
Let $\cX$ be a locally compact Hausdorff space. Suppose that  $\sA$ is contained in  $ C(\cX,\rset)$ and there exists a function $e\in\lin\sA$ such that $e(x)>0$ on $\cX$. Then:
\begin{enumerate}[i)]\setcounter{enumi}{6}
\item Let $p\in\pos(\sA)$. If $L_s(p) = \langle\vec{p},s\rangle = 0$, then  $ \supp\mu\subseteq\cZ(p)~ \forall\mu\in\fM_\sA(s)$.
\item $s=0 \gdw \fM_\sA(s)=\{0\}$.
\end{enumerate}
\end{prop}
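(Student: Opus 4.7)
The common thread is the identity $L_s(p) = \langle \vec p, s\rangle = \int_\cX p\, d\mu$, valid for every $p\in\lin\sA$ and every $\mu\in\fM_\sA(s)$, which converts nonnegativity of $p$ on $\cX$ into nonnegativity of $\langle \vec p, \cdot\rangle$ on $\cS_\sA$. Item (i) is read off at once: if $p\geq 0$ on $\cX$ then $\langle \vec p, s\rangle\geq 0$ for every $s\in\cS_\sA$, while $\langle \vec p, s_\sA(x)\rangle = p(x)$ yields the converse, since every $s_\sA(x)$ lies in $\cS_\sA$. Item (ii) is then a direct unpacking of definitions: $p\in\cN(s)$ packages $p\in\pos(\sA)$, equivalent by (i) to $\cS_\sA\subseteq H_{\vec p}^+$, with $L_s(p)=0$, equivalent to $s\in H_{\vec p}$; this is precisely the statement that $H_{\vec p}$ supports $\cS_\sA$ at $s$, i.e.\ $\vec p\in\nor_\sA(s)$.

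For (iii) the key observation is that $p = \langle \vec p, s_\sA(\cdot)\rangle$ is constant on each fiber $A_x = s_\sA^{-1}(s_\sA(x))$ with value $p(x)$. Hence for $\mu\in\fM_\sA(s)$ with $\mu(A_x)>0$ one obtains
\[0 = L_s(p) = \int_\cX p\, d\mu \;\geq\; \int_{A_x} p\, d\mu = p(x)\cdot\mu(A_x),\]
which forces $p(x)=0$. Item (iv) is immediate from (iii). For (v), I exploit that $\lin\cN(s)\subseteq \lin\sA$ is finite-dimensional: choose finitely many $p_1,\dots,p_N\in\cN(s)$ whose span equals $\lin\cN(s)$ and set $p := p_1+\cdots+p_N\in\cN(s)$. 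Nonnegativity yields $\cZ(p) = \bigcap_i \cZ(p_i)$, and any $q\in\cN(s)\subseteq \lin\cN(s)$ is a linear combination of the $p_i$ and hence vanishes on $\cZ(p)$; thus $\cZ(p) \subseteq \cV(s)$, while the reverse inclusion is trivial.

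The equivalence $0\in\fM_\sA(s) \gdw s=0$ in (vi) is immediate from $\int s_\sA\, d0 = 0$. For the second equivalence, if $s=0$ and $x\in\cW(s)$ then by Lemma \ref{lem:setofatoms} one could write $0 = c\, s_\sA(x) + \sum_i c_i\, s_\sA(x_i)$ with $c,c_i>0$, and pairing with $\vec e$ gives $0 = c\,e(x) + \sum_i c_i\,e(x_i) > 0$, a contradiction; conversely, if $s\neq 0$, Richter's Theorem produces a nontrivial atomic representation $s = \sum c_j\, s_\sA(x_j)$ with $c_j>0$, and every such $x_j$ then lies in $\cW(s)$. Statement (viii) is the same argument in measure form: if $s=0$, every $\mu\in\fM_\sA(0)$ satisfies $\int e\, d\mu = 0$ with $e>0$, and the level-set decomposition $\{e>1/n\}\uparrow \cX$ forces $\mu=0$, while the converse is trivial.

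Finally, (vii) refines (iii) using the LCH and continuity hypotheses: for $\mu\in\fM_\sA(s)$ with $L_s(p)=0$ and $p\geq 0$ continuous, the Chebyshev-type bound $\mu(\{p>1/n\})\leq n\int p\, d\mu = 0$ shows that the open set $\{p>0\}$ has $\mu$-measure zero, so the LCH definition of support yields $\supp\mu\subseteq \cZ(p)$. Apart from (v), where the main point is to realize that finite-dimensionality of $\lin\sA$ allows one to replace the intersection over the whole cone $\cN(s)$ by the zero set of a single representative $p$, every step is a one- or two-line consequence of the fundamental identity, with Richter's Theorem and the strict positivity of $e$ entering only where indicated.
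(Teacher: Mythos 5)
Your proof is correct and follows essentially the same route as the paper's: everything is driven by the identity $L_s(p)=\langle\vec p,s\rangle=\int p~\diff\mu$ for $\mu\in\fM_\sA(s)$, and items (i), (ii), (iv), (vi), (vii) match the paper's arguments almost verbatim. The deviations are harmless or are improvements. In (iii) you argue directly from the definition of $\cW(s)$ via the fiber $A_x$ (on which $p$ is constant because $p=\langle\vec p,s_\sA(\cdot)\rangle$), rather than first passing to an atomic representation via Lemma \ref{lem:setofatoms} as the paper does; both are one-line computations. In (v) the paper only cites an external reference, whereas you supply the standard argument: choose $p_1,\dots,p_N\in\cN(s)$ spanning the finite-dimensional space $\lin\cN(s)$, set $p=p_1+\cdots+p_N\in\cN(s)$, and use nonnegativity to get $\cZ(p)=\bigcap_i\cZ(p_i)$, which contains $\cZ(q)$-type information for every $q\in\cN(s)$ by linearity; this is exactly the intended proof and it is good that you made it self-contained. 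In (viii) the paper reduces to (vii) with $p=e$ (which tacitly requires knowing that empty support forces $\mu=0$), while your truncation argument $\mu(\{e>1/n\})\leq n\int e~\diff\mu=0$ together with $\{e>1/n\}\uparrow\cX$ is cleaner, avoids the support issue, and does not even use continuity of $e$. I see no gaps.
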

\begin{proof}
i): $p\in\pos(\sA) \gdw p(x)=\langle\vec{p},s_\sA(x)\rangle\geq 0\ \forall x\in\cX \gdw \langle\vec{p},c_1 s_\sA(x_1)+ \dots +c_m s_\sA(x_m)\rangle\ \forall x_1,\dots,x_m\in\cX$ and $c_1,\dots,c_m\in (0,\infty)\; \overset{(*)}{\Leftrightarrow}\; \langle \vec{p},s\rangle \geq 0\ \forall s\in\cS_\sA \gdw \cS_\sA\subseteq H_{\vec{p}}^+$, where ($*$) holds by Richter's Theorem \ref{thm:richter}.

ii): It is easily verified that all three assertions mean that $\langle\vec{p},s\rangle=0$ and $\cS_\sA\subseteq H_{\vec{p}}^+$. Hence they are equivalent.

iii): Let $x\in\cW(s)$. Then by Lemma \ref{lem:setofatoms} there are $c,c_i > 0$ and $x_i\in\cX$ such that\ $\mu = c\cdot\delta_x + \sum_{i=1}^k c_i \cdot s_\sA(x_i)\in\fM_\sA(s)$. Therefore, since $p\geq 0$ on $\cX$, we obtain
\[0 = L_s(p) = \int_\cX p(y)~\diff\mu(y) = c\cdot p(x) + \sum_{i=1}^k c_i\cdot p(x_i) \folgt x,x_i\in\cZ(p).\]

iv): Follows at once from iii).

v): \cite[Prop.\ 29]{didio17w+v+}.

vi): The first equivalence is clear and in the second the direction  $\Leftarrow$ follows from Richter's Theorem \ref{thm:richter}. For the remaining implication assume there is a $x\in\cW(s)$. Then, by  Lemma \ref{lem:setofatoms}, we have $s = c\cdot s_\sA(x) + \sum_{i=1}^k c_i\cdot s_\sA(x_i)$ and
\[0 = L_{s=0}(e) = c\cdot e(x)  + \sum_{i=1}^k c_i\cdot e(x_i) \geq c\cdot e(x) > 0,\]
which is a contradiction. Thus, $\cW(s) = \emptyset$.

vii): We repeat this well-known argument (see e.g. \cite[Proposition 1.23]{schmudMomentBook}). Suppose $x\notin \cZ(p)$. Since $p$ is continuous on $\cX$, there are an open neighborhood $U$ of $x$ and a constant $\delta >0$ such that $p(x)\geq \delta>0$ on $U$. Then
\[0=L_(p)=\int_\cX p(x) d\mu \geq \int_U p(x)\, d\mu \geq \delta \mu(U)\geq 0.\]
Hence $\mu(U)=0$, so that $x\notin \supp \mu$.

viii): Set $p=e$ in vi).
\end{proof}


\begin{prop}[{\cite[Thm.\ 4.6]{didioDiss}}]\label{prop:onep}
If $\vec{p}\in\relint\nor_\sA(s)$, then $\cV(s) = \cZ(p)$.
\end{prop}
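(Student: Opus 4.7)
The plan is to establish both inclusions $\cV(s) \subseteq \cZ(p)$ and $\cZ(p) \subseteq \cV(s)$, with the bulk of the argument concentrated on the second.

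The first inclusion is essentially a tautology. By part ii) of the preceding proposition, $\vec{p} \in \nor_\sA(s)$ is equivalent to $p \in \cN(s)$, so the hypothesis $\vec{p} \in \relint \nor_\sA(s) \subseteq \nor_\sA(s)$ immediately gives $p \in \cN(s)$. Since $\cV(s)$ is defined as the intersection $\bigcap_{q \in \cN(s)} \cZ(q)$, this forces $\cV(s) \subseteq \cZ(p)$.

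For the reverse inclusion, I would invoke the standard convex-analysis characterization of relative interior: if $\vec{p} \in \relint C$ for a convex set $C \subseteq \rset^m$, then for every $\vec{q} \in C$ there exists $\varepsilon > 0$ with $(1+\varepsilon)\vec{p} - \varepsilon\vec{q} \in C$. Fix $x \in \cZ(p)$ and an arbitrary $q \in \cN(s)$, so that $\vec{q} \in \nor_\sA(s)$ by part ii) of the prior proposition. Applying the characterization to $C = \nor_\sA(s)$ produces an element $p' := (1+\varepsilon)p - \varepsilon q \in \lin\sA$ whose coefficient vector still lies in $\nor_\sA(s)$, hence $p' \in \pos(\sA)$. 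Evaluating at $x$, where $p(x) = 0$, yields $0 \leq p'(x) = -\varepsilon q(x)$, so $q(x) \leq 0$; combined with $q \in \pos(\sA)$ this forces $q(x) = 0$, i.e., $x \in \cZ(q)$. Since $q \in \cN(s)$ was arbitrary, $x \in \cV(s)$.

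The only point requiring care is confirming that the relative-interior characterization applies cleanly on the cone $\nor_\sA(s)$. Because the correspondence $\vec{p} \leftrightarrow p$ is a linear isomorphism $\rset^m \to \lin\sA$ that identifies $\nor_\sA(s)$ with $\cN(s)$, and $\nor_\sA(s)$ is a genuine convex subset of $\rset^m$, the characterization transfers without modification. I therefore do not expect any serious obstacle beyond this purely formal check; the substance of the proof is the one-line algebraic observation that $(1+\varepsilon)p(x) - \varepsilon q(x) \geq 0$ at a zero of $p$ pins $q(x)$ down to $0$.
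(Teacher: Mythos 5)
Your proof is correct and is essentially the paper's own argument: the paper likewise uses the relative-interior characterization to write $\vec q_v:=\vec p-\varepsilon_v v\in\nor_\sA(s)$ for each $v\in\nor_\sA(s)$, so that $p=q_v+\varepsilon_v\langle v,s_\sA(\cdot)\rangle$ is a sum of two nonnegative functions and a zero of $p$ forces a zero of $\langle v,s_\sA(\cdot)\rangle$, which is exactly your step $0\leq(1+\varepsilon)p(x)-\varepsilon q(x)=-\varepsilon q(x)$. The two formulations of the relint property differ only by a harmless rescaling in the cone, so there is nothing substantive to distinguish them.
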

\begin{proof}
Let $v\in\nor_\sA(s)$. Since $\vec{p}\in\relint\nor_\sA(s)$, there is a number $\varepsilon_v > 0$ such that\ $\vec{q}_v := \vec{p}-\varepsilon_v\cdot v\in\nor_\sA(s)$, i.e., $q_v := \langle\vec{q}_v,s_\sA(\,\cdot\,)\rangle\geq 0$. Then
\[\cZ(p) \subseteq \cZ(q_v + \varepsilon_v \langle v,s_\sA(\,\cdot\,)\rangle \subseteq \cZ(\langle v,s_\sA(\,\cdot\,)\rangle) \qquad\forall v\in\nor_\sA(s),\]
i.e.,
\[\cZ(p) \subseteq \bigcap_{v\in\nor_\sA(s)} \cZ(\langle v,s_\sA(\,\cdot\,)\rangle =: \cV(s) \subseteq \cZ(p).\qedhere\]
\end{proof}

The next theorem is \cite[Thm.\ 30]{didio17w+v+}. It is valid in the measurable case as well with verbatim the same proof using Lemma \ref{lem:setofatoms}.

\begin{thm}\label{thm:Vs=Ws}
$\cV(s) = \cW(s) \gdw s\in\relint F\ \text{for an exposed face}\ F\ \text{of}\ \cS_\sA$.
\end{thm}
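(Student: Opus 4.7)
The plan is to invoke Proposition \ref{prop:onep}, which supplies a distinguished $\vec p \in \relint \nor_\sA(s)$ with $\cV(s) = \cZ(p)$; the associated exposed face $F_{\vec p} := \cS_\sA \cap H_{\vec p}$ will serve as the candidate face in the direction $(\Rightarrow)$. Since $\cW(s) \subseteq \cV(s)$ is already available, the heart of the argument is to establish the reverse inclusion in $(\Leftarrow)$ and to show that $s$ lies in the relative interior of $F_{\vec p}$ in $(\Rightarrow)$.

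For $(\Leftarrow)$, assume $s \in \relint F$ with $F = \cS_\sA \cap H_v$ exposed, and set $p_v(x) := \langle v, s_\sA(x) \rangle$. Lemma \ref{lem:boundaryreduction} realizes $F$ as a moment cone on $\cZ(p_v)$, and $p_v \in \cN(s)$ yields $\cV(s) \subseteq \cZ(p_v)$, whence $s_\sA(x) \in F$ for every $x \in \cV(s)$. The line-extension property of the relative interior provides $\lambda > 0$ with $s + \lambda(s - s_\sA(x)) \in F$; writing $s' := (s + \lambda(s - s_\sA(x)))/(1+\lambda) \in F$ and $\varepsilon := \lambda/(1+\lambda)$, one obtains $s = \varepsilon\, s_\sA(x) + s'$. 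Richter's Theorem (Theorem \ref{thm:richter}) produces an atomic representing measure $\nu$ of $s'$, so $\mu := \varepsilon \delta_x + \nu \in \fM_\sA(s)$ satisfies $\mu(A_x) \geq \varepsilon > 0$, i.e.\ $x \in \cW(s)$.

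For $(\Rightarrow)$, suppose towards contradiction that $s \notin \relint F_{\vec p}$. Convex separation inside $\aff F_{\vec p}$ yields a linear functional $q_0 : \aff F_{\vec p} \to \rset$ with $q_0 \geq 0$ on $F_{\vec p}$, $q_0(s) = 0$, and $q_0 \not\equiv 0$ on $F_{\vec p}$. Any linear extension $\vec q$ of $q_0$ to $\rset^m$ defines $q := \langle \vec q, s_\sA(\cdot)\rangle \in \lin \sA$; since $s_\sA(\cV(s)) \subseteq F_{\vec p}$, one has $q \geq 0$ on $\cV(s)$. Richter's Theorem applied to $F_{\vec p}$ (itself a moment cone on $\cV(s)$) gives $F_{\vec p} = \cone \conv s_\sA(\cV(s))$, and so $q_0 \not\equiv 0$ on $F_{\vec p}$ forces the existence of $y \in \cV(s)$ with $q(y) > 0$. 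The assumption $\cV(s) = \cW(s)$ together with Lemma \ref{lem:setofatoms} now furnishes an atomic representing measure $\mu_y = c\,\delta_y + \sum_i c_i\, \delta_{x_i}$ of $s$ with $c > 0$; each atom $x_i$ satisfies $\mu_y(A_{x_i}) \geq c_i > 0$ and hence lies in $\cW(s) = \cV(s)$, so $q(x_i) \geq 0$. Consequently
\[
L_s(q) \;=\; \int_\cX q \, d\mu_y \;=\; c\, q(y) + \sum_i c_i\, q(x_i) \;\geq\; c \cdot q(y) \;>\; 0,
\]
contradicting $L_s(q) = \langle \vec q, s\rangle = q_0(s) = 0$.

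The main obstacle I anticipate is the $(\Rightarrow)$ direction: the non-closedness of $\cS_\sA$ (and hence of $F_{\vec p}$) obstructs any naive attempt to perturb $\vec p$ to a new element of $\nor_\sA(s)$ or to lift $q_0$ to a globally non-negative polynomial in $\pos(\sA)$. The observation that resolves this difficulty is that only positivity of $q$ on $\cV(s)$ is actually needed, because every atomic representing measure of $s$ is automatically supported in $\cW(s) = \cV(s)$; this weaker positivity follows immediately from $s_\sA(\cV(s)) \subseteq F_{\vec p}$ combined with $q_0 \geq 0$ on $F_{\vec p}$.
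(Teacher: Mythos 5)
Your proof is correct: both directions check out, and the argument --- producing the candidate exposed face $\cS_\sA\cap H_{\vec{p}}$ from Proposition \ref{prop:onep}, using the line-extension characterization of the relative interior together with Richter's Theorem and Lemma \ref{lem:setofatoms} for one implication, and proper separation inside the face combined with the observation that all atoms of a representing measure lie in $\cW(s)=\cV(s)$ for the other --- is essentially the proof the paper defers to in \cite[Thm.\ 30]{didio17w+v+}. No gaps.
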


By definition, the moment cone $\cS_\sA$ itself is also an exposed face and we have $\cW(s)=\cX$ for all $s\in\inter\cS_\sA$. Theorem \ref{thm:Vs=Ws} shows that exposed faces and faces $F$ of $\cS_\sA$ such that $s\in\relint F$ play a central role for the study of the set of atoms and the structure of $\cS_\sA$.

 S.\ Karlin and L.\ S.\ Shapley \cite{karlin53}  investigated  the face structure of the moment cone $\cS_\sA$ in the special case $\sA = \{1,x,\dots,x^d\}$ on $\cX = [0,1]$. We will generalize this study to the multivariate truncated moment problem.

\begin{dfn}
For $s\in\cS_\sA$ we define the \emph{face $\cF_s$ of $s$} as the face of $\cS_\sA$ such that $s\in\relint\cF_s$ and the \emph{dimension $\cD_s$ of $s$} as $\cD_s := \dim\cF_s$. Additionally, we define the \emph{exposed face $\cE_s$ of $s$} as the smallest exposed face of $\cS_\sA$ containing $s$.
\end{dfn}

It is clear from the definition that $\cF_s$ and $\cE_s$ are unique and $\cF_s \subseteq \cE_s$.  In \cite{karlin53} $\cE_s$ is called \emph{contact set} and $\cF_s$ is the \emph{reduced contact set}, since it is obtained by an iterated cutting  out of $\cS_\sA$, see e.g.\ Theorem \ref{thm:WsZps} and the discussion afterwards.

\begin{lem}
Any face $F$ of $\cS_\sA$ is of the form $\cF_s$ for some $s\in\cS_\sA$.
\end{lem}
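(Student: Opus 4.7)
The statement is essentially immediate from two standard facts of convex analysis applied to the convex cone $\cS_\sA$, and nothing specific to moment cones is needed. The goal is, given an arbitrary face $F$ of $\cS_\sA$, to exhibit an $s\in\cS_\sA$ with $s\in\relint F$ and to identify $F$ with the uniquely determined face $\cF_s$ from the preceding definition.

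First I would invoke the fact that every non-empty convex subset of $\rset^m$ has non-empty relative interior (see e.g.\ \cite[Thm.\ 6.2]{rock72}). Any face $F$ of $\cS_\sA$ is itself convex and non-empty, so $\relint F\neq\emptyset$, and I may choose an arbitrary $s\in\relint F$. Clearly $s\in F\subseteq\cS_\sA$, so $s$ is a bona fide moment sequence to which the construction $\cF_s$ applies.

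Second, I would appeal to the standard fact that at each point $s$ of a convex set $C$ there is a \emph{unique} face of $C$ whose relative interior contains $s$ (see e.g.\ \cite[Thm.\ 18.2]{rock72}); this is precisely the content of the definition of $\cF_s$. Applying this uniqueness to $C=\cS_\sA$ and to the point $s\in\relint F$ chosen above yields $F=\cF_s$, which is the assertion.

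The only minor point that would need a sentence of justification is the uniqueness of the face whose relative interior contains a given $s$. One way to argue this directly is to define the face $F_s^{\min}$ generated by $s$ (the intersection of all faces of $\cS_\sA$ containing $s$), observe that $s\in\relint F_s^{\min}$ by a line-segment/extension argument, and show that any face $G$ with $s\in\relint G$ must contain $F_s^{\min}$ and, conversely, must be contained in $F_s^{\min}$ because $s$ sits in the relative interior of $G$. This is the only step that requires a genuine (if routine) convex-geometric argument; everything else is simply unpacking definitions.
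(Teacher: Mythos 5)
Your proposal is correct and follows essentially the same route as the paper, whose entire proof is the one-line instruction ``Take an element $s\in\relint F$''; you have merely made explicit the two standard convexity facts (non-emptiness of $\relint F$ and uniqueness of the face whose relative interior contains a given point) that the paper leaves implicit.
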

\begin{proof}
Take an element $s\in\relint F$.
\end{proof}

The next proposition connects the set $\cW(s)$ with $\cF_s$ and $\cV(s)$ with $\cE_s$.

\begin{prop}
\begin{enumerate}[i)]
\item $s_\sA(x)\in\cF_s \gdw x\in\cW(s)$, i.e.,
\[\cW(s) = s_\sA^{-1}(\cF_s) \qquad\text{and}\qquad \cF_s = \conv\cone s_\sA(\cW(s)).\]

\item $s_\sA(x)\in\cE_s \gdw x\in\cV(s)$, i.e.,
\[\cV(s) = s_\sA^{-1}(\cE_s) \qquad\text{and}\qquad \cE_s = \conv\cone s_\sA(\cV(s)).\]

\item The sets $\cW(s)$ and $\cV(s)$ are measurable.
\end{enumerate}
\end{prop}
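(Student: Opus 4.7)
The plan is to prove (i) by direct arguments on the face $\cF_s$, derive (ii) from Proposition~\ref{prop:onep} together with Lemma~\ref{lem:boundaryreduction}, and obtain (iii) by iterating the reduction of Lemma~\ref{lem:boundaryreduction}. The principal tools are Lemma~\ref{lem:setofatoms}, Richter's Theorem~\ref{thm:richter}, and the standard fact that $s\in\relint\cF_s$ and $v\in\cF_s$ yield $s-\lambda v\in\cF_s$ for some $\lambda>0$.

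For (i), I would establish $s_\sA(x)\in\cF_s \Leftrightarrow x\in\cW(s)$ in both directions. If $x\in\cW(s)$, Lemma~\ref{lem:setofatoms} gives $s=c\,s_\sA(x)+t$ with $c>0$ and $t\in\cS_\sA$; writing $\tfrac{1}{c+1}s=\tfrac{c}{c+1}s_\sA(x)+\tfrac{1}{c+1}t$ as a convex combination, the face property applied to $\tfrac{1}{c+1}s\in\cF_s$ together with the cone property forces $s_\sA(x),t\in\cF_s$. Conversely, if $s_\sA(x)\in\cF_s$, the relative-interior property of $s\in\relint\cF_s$ supplies $\lambda>0$ with $s-\lambda\,s_\sA(x)\in\cF_s\subseteq\cS_\sA$; adjoining $\lambda\delta_x$ to a representing measure of $s-\lambda\,s_\sA(x)$ produces $\mu\in\fM_\sA(s)$ with $\mu(A_x)\geq\lambda>0$, so $x\in\cW(s)$. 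The identity $\cF_s=\conv\cone s_\sA(\cW(s))$ then follows: $\supseteq$ is immediate, and for $\subseteq$ I would apply Richter's Theorem to write any $t\in\cF_s$ as $\sum c_i\,s_\sA(y_i)$ with $c_i>0$ and use the face property iteratively on the summands to conclude $s_\sA(y_i)\in\cF_s$, hence $y_i\in\cW(s)$.

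For (ii), Proposition~\ref{prop:onep} supplies $\vec p\in\relint\nor_\sA(s)$ with $\cV(s)=\cZ(p)$, where $p:=\langle\vec p,s_\sA(\cdot)\rangle\in\pos(\sA)$. The same $\varepsilon$-trick used in the proof of Proposition~\ref{prop:onep} shows that $\cS_\sA\cap H_{\vec p}\subseteq H_{\vec v}$ for every $\vec v\in\nor_\sA(s)$, which combined with $\cE_s=\cS_\sA\cap\bigcap_{\vec v\in\nor_\sA(s)}H_{\vec v}$ identifies $\cE_s$ with $\cS_\sA\cap H_{\vec p}$. Consequently $s_\sA(x)\in\cE_s \Leftrightarrow \langle\vec p,s_\sA(x)\rangle=0 \Leftrightarrow x\in\cV(s)$. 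For the generation formula, Lemma~\ref{lem:boundaryreduction} presents $\cE_s$ as the moment cone on $(\cV(s),\fA|_{\cV(s)})$, so Richter's Theorem represents every element of $\cE_s$ as a nonnegative combination $\sum c_i\,s_\sA(y_i)$ with $y_i\in\cV(s)$.

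For (iii), $\cV(s)=p^{-1}(\{0\})$ is measurable because $p$ is a linear combination of the measurable $a_i$. Measurability of $\cW(s)$ is the main obstacle; I would handle it by iterating Lemma~\ref{lem:boundaryreduction}: set $K_0:=\cS_\sA$, $\cZ_0:=\cX$, and inductively let $K_{i+1}$ be the smallest exposed face of $K_i$ containing $s$, which by part (ii) applied to $K_i$ is the moment cone on $(\cZ_{i+1},\fA|_{\cZ_{i+1}})$ for the measurable set $\cZ_{i+1}:=\cV_{K_i}(s)$. Since $\cF_s\subseteq K_i$ at every step and $\dim K_i$ strictly decreases whenever $\cF_s\ne K_i$, the iteration reaches $K_N=\cF_s$ in at most $m$ steps. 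An auxiliary argument---$\int p\,d\mu=0$ together with $p\geq 0$ forces $\mu(\cX\setminus\cV(s))=0$, and iteratively $\mu(\cX\setminus\cZ_i)=0$---yields $\cW(s)=\cW_{K_i}(s)$ at every step. At the terminal step $s$ lies in the interior of $K_N$, so $\nor_{K_N}(s)=\{0\}$ and $\cV_{K_N}(s)=\cZ_N$; applying Theorem~\ref{thm:Vs=Ws} inside $K_N$ gives $\cW(s)=\cW_{K_N}(s)=\cV_{K_N}(s)=\cZ_N\in\fA$. The main technical point is making this iterative bookkeeping rigorous in the purely measurable setting.
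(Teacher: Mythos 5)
Your proof is correct. Parts (i) is essentially the paper's own argument: the same $\relint$-trick for ``$\Rightarrow$'' and the same face-property argument applied to the atomic decomposition from Lemma~\ref{lem:setofatoms} for ``$\Leftarrow$'' (the paper leaves the generation formula $\cF_s=\conv\cone s_\sA(\cW(s))$ implicit, which you correctly fill in via Richter). For (ii) you diverge: the paper simply picks $s'\in\relint\cE_s$ and applies (i) together with $\cW(s')=\cV(s)$ (which leans on Theorem~\ref{thm:Vs=Ws}), whereas you argue directly from Proposition~\ref{prop:onep} that $\cS_\sA\cap H_{\vec{p}}$ is contained in every $\cS_\sA\cap H_{\vec{v}}$, $\vec{v}\in\nor_\sA(s)$, hence equals $\cE_s$; your route has the merit of being self-contained and of making explicit why the ``smallest exposed face'' is itself exposed. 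The real difference is in (iii): the paper disposes of the measurability of $\cW(s)$ in one line, $\cW(s)=s_\sA^{-1}(\cF_s)$ with $s_\sA$ measurable --- which tacitly assumes that the face $\cF_s$ is a Borel subset of $\rset^m$, something that is not automatic for an arbitrary convex set. You instead run the core-variety iteration (iterated smallest exposed faces, with the concentration argument $\int p\,d\mu=0$, $p\geq 0$ forcing every representing measure onto $\cZ(p)$) to exhibit $\cW(s)$ as a finite decreasing intersection $\cZ_N=\cX\cap\cZ(p_1)\cap\dots\cap\cZ(p_N)$ of zero sets of elements of $\lin\sA$. This is considerably heavier, and it essentially re-proves Theorem~\ref{thm:WsZps} and the identity $\cW(s)=\cV_C(L_s)$ of \cite{didio17w+v+} along the way, but the bookkeeping you outline (strict dimension drop, $\cW(s)=\cW_{K_i}(s)$ at each stage, termination at $s\in\relint K_N$ with $\cW_{K_N}(s)=\cZ_N$) does go through and delivers measurability without any Borel hypothesis on $\cF_s$; in that sense your argument is more robust than the paper's one-liner, at the cost of front-loading material the paper only develops afterwards.
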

\begin{proof}
i): ``$\Rightarrow$'': Since $s\in\relint\cF_s$ and $s_\sA(x)\in\cF_\sA$ there is an $\varepsilon > 0$ such that\ $s' := s - \varepsilon\cdot s_\sA(x)\in\cF_s\subseteq\cS_\sA$. Let $\mu'\in\fM_\sA(s')$, then $\mu := \varepsilon\cdot\delta_x + \mu'\in\fM_\sA(s)$ and therefore $x\in\cW(s)$ by Lemma \ref{lem:setofatoms}.

``$\Leftarrow$'': Let $x\in\cW(s)$. Then, by Lemma \ref{lem:setofatoms}, $s = c\cdot s_\sA(x) + \sum_{i=1}^k c_i\cdot s_\sA(x_i)$ for some $c,c_i> 0$ and $x_i\in\cX$. But since $s_\sA(x),s_\sA(x_i)\in\cS_\sA$ and $\cF_s$ is a face of $\cS_\sA$ containing $s$, also $s_\sA(x),s_\sA(x_i)\in\cF_s$.

ii): Let $s'\in\relint\cE_s$ and apply i) with $\cW(s') = \cV(s)$.

iii): Since the functions of $\sA$ are measurable and $\sA$ is a finite set, $s_\sA$ is a measurable function and therefore $\cW(s) = s_\sA^{-1}(\cF_s)\in\fA$ is measurable by i). $\cV(s)$ is measurable by ii) or by $\cV(s) = \cZ(p)$ from Proposition \ref{prop:onep}.
\end{proof}

So $\cW(s)\subseteq\cV(s)$ means geometrically that $\cF_s\subseteq\cE_s$ and Theorem \ref{thm:Vs=Ws} means geometrically that $\cF_s = \cE_s$. From the preceding considerations we find the following useful description of the set of atoms $\cW(s)$.

\begin{thm}\label{thm:WsZps} 
For any $s\in\cS_\sA$  there exist functions $p\in\pos(\sA)$ (i.e., $p$ with $\cV(s)=\cZ(p)$ from Proposition \ref{prop:onep}) and $p_1,\dots,p_k\in\lin\sA$ with $k \leq m - \cD_s -1$ such that
\begin{equation}\label{eq:sWzeros}
\cW(s) = \cZ(p)\cap \cZ(p_1)\cap \dots\cap \cZ(p_k).
\end{equation}
\end{thm}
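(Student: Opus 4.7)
The plan is to realize $\cW(s)=s_\sA^{-1}(\cF_s)$ by iteratively cutting $\cS_\sA$ with supporting hyperplanes until we reach $\cF_s$. The first cut is the one supplied by Proposition~\ref{prop:onep}, which gives the function $p\in\pos(\sA)$ with $\cV(s)=\cZ(p)$ and identifies $\cE_s=\cS_\sA\cap H_{\vec{p}}$. By Lemma~\ref{lem:boundaryreduction}, $\cE_s$ is itself a moment cone on $(\cZ(p),\fA|_{\cZ(p)})$, so the construction can be iterated inside this smaller moment cone. The subsequent cuts produce the functions $p_1,\dots,p_k\in\lin\sA$, and a dimension count will deliver the bound $k\leq m-\cD_s-1$.

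More concretely, the case $s\in\inter\cS_\sA$ is trivial: then $\cF_s=\cE_s=\cS_\sA$, $\cD_s=m$, $\cW(s)=\cX$, and I take $p=0$ with $k=0$. Otherwise $s\in\partial^*\cS_\sA$ lies in a proper exposed face, so $\dim\cE_s\leq m-1$; Proposition~\ref{prop:onep} produces the required $p$. I then iterate: at stage $i\geq 0$, write $F_i$ for the current moment cone on $\cZ_i\subseteq\cX$ (with $F_0=\cE_s$ and $\cZ_0=\cZ(p)$). If $s\in\relint F_i$, then $F_i=\cF_s$ by uniqueness of the face whose relative interior contains $s$, and I stop with $k=i$. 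Otherwise, Proposition~\ref{prop:onep} applied to the moment cone $F_i$ yields a function $p_{i+1}\in\lin\sA$ whose hyperplane supports $F_i$ at $s$, and $F_{i+1}:=F_i\cap H_{\vec{p}_{i+1}}$ is a moment cone on $\cZ_{i+1}=\cZ_i\cap\cZ(p_{i+1})$ of strictly smaller dimension. Since $\dim F_i$ strictly drops from $\dim\cE_s\leq m-1$ down to $\cD_s$, the iteration terminates after $k\leq m-1-\cD_s$ steps, giving
\[\cW(s)=s_\sA^{-1}(\cF_s)=\cZ(p)\cap\cZ(p_1)\cap\dots\cap\cZ(p_k).\]

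The main obstacle will be to ensure that each ``supporting hyperplane of the restricted moment cone'' corresponds to an honest function $p_i\in\lin\sA$ on the full set $\cX$, rather than merely to an element of $\lin\sA|_{\cZ_i}$. The fix is to lift any supporting functional from the restricted space to an extension in $\lin\sA$; the extended function $p_i$ may take nonzero values outside $\cZ_{i-1}$, but its zero set on $\cX$ intersected with $\cZ_{i-1}$ coincides with the zero set of its restriction, so the iterated intersection $\cZ(p)\cap\cZ(p_1)\cap\dots\cap\cZ(p_k)$ correctly tracks $\cZ_k$. The remaining point---that the iteration stops exactly at $\cF_s$---follows because any face of $F_i$ whose relative interior contains $s$ is automatically a face of $\cS_\sA$ with the same property and must equal $\cF_s$ by uniqueness.
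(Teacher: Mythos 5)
Your argument is correct, but it reaches (\ref{eq:sWzeros}) by a genuinely different route than the paper. The paper's proof is a one-shot linear-algebra argument: since $\cF_s$ is a face of the cone, $\cF_s=\cS_\sA\cap\lin\cF_s$, and the subspace $\lin\cF_s=\lin s_\sA(\cW(s))$ --- of dimension $\cD_s$ and contained in the supporting hyperplane $H_{\vec{p}}$ --- is written as $H_{\vec{p}}\cap H_{v_1}\cap\dots\cap H_{v_k}$ with $k\leq m-\cD_s-1$ by a pure codimension count; pulling back under $s_\sA$ gives the result with $p_i=\langle v_i,s_\sA(\,\cdot\,)\rangle$, and these $p_i$ carry no positivity whatsoever. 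Your iterated descent through exposed faces of the successively restricted moment cones (Lemma \ref{lem:boundaryreduction} applied repeatedly) needs more bookkeeping --- that iterated faces are faces of $\cS_\sA$, that the descent terminates at $\cF_s$ by uniqueness of the face containing $s$ in its relative interior, and the lifting of supporting functionals (which, by the way, is automatic if you take the supporting hyperplane of $F_i\subseteq\rset^m$ to be $H_v$ for some $v\in\rset^m$ and set $p_{i+1}:=\langle v,s_\sA(\,\cdot\,)\rangle\in\lin\sA$ directly, rather than passing through $\lin\sA|_{\cZ_i}$). In exchange it delivers, built in, exactly the refinement the paper only records as a remark after its proof, namely that one may take $p_i\in\pos(\sA,\cZ(p)\cap\cZ(p_1)\cap\dots\cap\cZ(p_{i-1}))$, and it makes the parallel with the core-variety sequence $\cV_j(L)$ defined later in the section explicit. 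The dimension count, hence the bound on $k$, is the same in both arguments; in the interior case your ``$k=0$ with an empty list of $p_i$'' is the same convention the paper writes as ``$k\leq -1$, no $p_i$ required.''
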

\begin{proof}
First let $s\in\inter\cS_\sA$. Then, as noted above, $\cW(s) = \cX$, so $\cW(s) = \cZ(p)$ with $p=0$. Now let $s\in\partial^*\cS_\sA$. Then there is a supporting hyperplane $H_v$ of $\cS_\sA$ such that\ $s\in H_v$ (w.l.o.g.\ $p$ s.t.\ $\cV(s)=\cZ(p)$ from Proposition \ref{prop:onep}). Additionally, there are $v_1,\dots,v_k\in\rset^m$ with $k\leq m-\cD_s-1$ such that\ $\lin\cF_s = \lin s_\sA(\cW(s)) = H_v\cap H_{v_1}\cap \dots\cap H_{v_k}$. Then
\begin{align*}
\cW(s) &= s_\sA^{-1}(\cF_s) = s_\sA^{-1}(\cS_\sA\cap\lin\cF_s) = s_\sA^{-1}(\cS_\sA\cap H_v\cap H_{v_1}\cap \dots\cap H_{v_k})\\ &= \cZ(p)\cap \cZ(p_1)\cap\dots\cap\cZ(p_k)
\end{align*}
with $p(x) := \langle v, s_\sA(x)\rangle$ and $p_i(x) := \langle v_i,s_\sA(x)\rangle$.
\end{proof}

W.l.o.g.\ we can assume $p_i\in\pos(\sA,\cZ(p)\cap\cZ(p_1)\cap\dots\cap\cZ(p_{i-1}))$ in the proof.

Theorem \ref{thm:WsZps} says that we can cut $\cF_s$ out of $\cS_\sA$ by finitely many hyperplanes. Note that for $s\in\inter\cS_\sA$ we have $\cW(s)=\cX$ by \cite[Thm.\ 16]{didio17Cara} and therefore $p=0$ and $k\leq -1$, i.e., no function $p_i$ is required. The inequality $k \leq m - \cD_s -1$ gives an upper bound for $k$, but  Example \ref{exm:harris} or Theorem \ref{thm:endingat2} show that it may happen that $k < m - \cD_s -1$.  For a fixed $\cS_\sA$ let $k_{\max}\leq m - 2$ be the maximal $k$ needed such that (\ref{eq:sWzeros}) holds for any $s\in\cS_\sA$. By taking an $s\in\relint\cS_\sA\cap H_{v_1}\cap\dots\cap H_{v_k'}$ we see for any $k'=0,\dots,k_{\max}$ there is an $s\in\cS_\sA$ such that $k'$ is the minimal $k$ with (\ref{eq:sWzeros}).

First we note that Richter's Theorem \ref{thm:richter} implies that the moment cone $\cS_\sA$ with $\sA = \{a_1,\dots,a_m\}\subset\rset[x_1,\dots,x_n]$ on a semi-algebraic $\cX\subseteq\rset^n$ and its dual cone $\pos(\sA)$ are semi-algebraic, i.e., there exist finitely many polynomial inequalities for the components $s_i$ of the sequence $s = (s_i)_{i=1}^m$ or the coefficients $c_i$ of the polynomial $p = \sum_{i=1}^m c_i a_i$ for deciding $s\in\cS_\sA$ or $p\in\pos(\sA)$.

\begin{exm}\label{exm:semialgSA}
Let $\sA = \{1,x,x^2\}$. Set $q(x) = 1+x^2$ and $\sB = \{1+x^2,x,x^2\}$, so $\cS_\sB = \conv \cone \range s_\sB(\rset)$, i.e.,
\begin{equation}\label{eq:base}
\conv\range \begin{pmatrix}\frac{x}{1+x^2}\\ \frac{x^2}{1+x^2}\end{pmatrix}
\end{equation}
is a base of the cone $\cS_\sB$. Note that (\ref{eq:base}) is a circle in $\rset^2$ with center $(0,1/2)$ and radius $1/2$ without the point $(0,1)$. The point $(0,1)$ corresponds to an ``atom at infinity''. The semi-algebraic description of (\ref{eq:base}) is
\[\{(x,y)\in\rset^2 \;|\; x^2 + (y-1/2)^2 \leq 1/4 \;\;\wedge\;\; y < 1\},\]
where $x = \frac{s_1}{s_0+s_2}$ and $y = \frac{s_2}{s_0+s_2}$. Then $s=(s_0,s_1,s_2)\in\cS_\sA\setminus\{0\}$ if and only if
\[s_0>0,\quad s_2\geq 0 \quad\text{and}\quad \left(\frac{s_1}{s_0+s_2}\right)^2 + \left(\frac{s_2}{s_0+s_2}-\frac{1}{2}\right)^2 \leq \frac{1}{4}.\]
\end{exm}

Of course, if $\cS_\sA$ is semi-algebraic, so are $\cE_s$ and $\cF_s$ for any $s\in\cS_\sA$. For polynomials on semi-algebraic sets Theorem \ref{thm:WsZps} has the following corollary.

\begin{cor}\label{cor:semialgWs}
Let $\cX$ be a (semi-)algebraic set  in $\rset^n$ and  $\sA$ a set of polynomials. Then the set of atoms $\cW(s)$ is (semi-)algebraic for any moment sequence $s\in\cS_\sA$. The same is true if $\cX$ is (semi-)algebraic in $\pset^n$ and $\sA$ consists of homogeneous polynomials.
\end{cor}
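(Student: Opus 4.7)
The plan is to reduce the statement immediately to Theorem \ref{thm:WsZps}. That theorem gives, for each $s\in\cS_\sA$, a representation
\[\cW(s) = \cZ(p)\cap \cZ(p_1)\cap \dots \cap \cZ(p_k),\]
where $p\in\pos(\sA)\subseteq\lin\sA$ and $p_1,\dots,p_k\in\lin\sA$. Since by hypothesis $\sA$ consists of polynomials, $\lin\sA$ is a finite-dimensional subspace of $\rset[x_1,\dots,x_n]$, so $p,p_1,\dots,p_k$ are all polynomials in $n$ variables. Here the zero sets are understood inside the ambient set $\cX$, i.e.\ $\cZ(q)=\{x\in\cX \,|\, q(x)=0\}$.

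First I would observe that in $\rset^n$ the set $\widetilde{\cZ}(q):=\{x\in\rset^n \,|\, q(x)=0\}$ is algebraic for every polynomial $q$, hence semi-algebraic, so that $\cZ(q)=\cX\cap\widetilde\cZ(q)$ is algebraic whenever $\cX$ is algebraic and semi-algebraic whenever $\cX$ is semi-algebraic (using that the class of (semi-)algebraic subsets of $\rset^n$ is closed under finite intersections). Applying this to each of $p,p_1,\dots,p_k$ and taking the intersection, which is finite, yields
\[\cW(s)=\bigcap_{i=0}^{k}\bigl(\cX\cap\widetilde\cZ(p_i)\bigr)\quad(\text{with }p_0:=p),\]
which is (semi-)algebraic in the same sense as $\cX$.

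For the projective statement the argument is verbatim the same, once one knows that the zero set of a homogeneous polynomial in $\pset^n$ is a projective algebraic set and that the class of (semi-)algebraic subsets of $\pset^n$ is closed under finite intersections; both facts are standard, and again Theorem \ref{thm:WsZps} provides the expression of $\cW(s)$ as an intersection of such sets with $\cX$.

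The only substantive point to check is that Theorem \ref{thm:WsZps} is applicable without extra hypotheses in the polynomial case, but this is granted because its statement only uses that $\sA$ is linearly independent. Thus there is no real obstacle; the corollary is a direct structural consequence of Theorem \ref{thm:WsZps} together with elementary closure properties of (semi-)algebraic sets, and the main content was already absorbed in proving that theorem.
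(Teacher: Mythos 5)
Your proof is correct and follows exactly the route the paper intends: the corollary is stated as an immediate consequence of Theorem \ref{thm:WsZps}, namely that $\cW(s)$ is a finite intersection of polynomial zero sets with the (semi-)algebraic set $\cX$, and the closure of (semi-)algebraic sets under finite intersection. Nothing is missing.
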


We return to the face structure of $\cS_\sA$.

\begin{dfn}\label{dfn:perfect}
A convex set $K\subseteq\rset^m$ is called \emph{perfect} iff every face of $K$ is also an exposed face.
\end{dfn}

An immediate consequence of Theorem \ref{thm:Vs=Ws} yields the following.

\begin{cor}
The following are equivalent:
\begin{enumerate}[i)]
\item The moment cone $\cS_\sA$ is perfect.
\item $\cV(s) = \cW(s)$ for all $s\in\cS_\sA$.
\item $\cF_s = \cE_s$ for all $s\in\cS_\sA$.
\end{enumerate} 
\end{cor}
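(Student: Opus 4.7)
The plan is to chain the three equivalences using the proposition linking $\cW(s),\cV(s)$ to $\cF_s,\cE_s$, plus Theorem \ref{thm:Vs=Ws} and the lemma that every face of $\cS_\sA$ is of the form $\cF_s$.

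First I would verify (ii) $\Leftrightarrow$ (iii), which is essentially a restatement. From the proposition preceding Theorem \ref{thm:WsZps} we have $\cW(s) = s_\sA^{-1}(\cF_s)$, $\cV(s) = s_\sA^{-1}(\cE_s)$, $\cF_s = \conv\cone\, s_\sA(\cW(s))$ and $\cE_s = \conv\cone\, s_\sA(\cV(s))$. Hence $\cW(s)=\cV(s)$ immediately forces $\cF_s=\cE_s$, and conversely $\cF_s=\cE_s$ forces $\cW(s)=\cV(s)$. This step is routine.

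Next I would handle (i) $\Rightarrow$ (iii). Assume $\cS_\sA$ is perfect. Since $\cF_s$ is a face of $\cS_\sA$ and every face is exposed, $\cF_s$ is an exposed face containing $s$. But $\cE_s$ is defined as the \emph{smallest} exposed face containing $s$, so $\cE_s \subseteq \cF_s$. The reverse inclusion $\cF_s \subseteq \cE_s$ was noted directly after the definition of $\cE_s$. Combining gives $\cF_s=\cE_s$.

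For (iii) $\Rightarrow$ (i), let $F$ be an arbitrary face of $\cS_\sA$. By the lemma above, $F = \cF_s$ for some $s\in\cS_\sA$ (take $s\in\relint F$). By assumption $\cF_s = \cE_s$, so $F$ equals an exposed face of $\cS_\sA$ and is therefore itself exposed. Hence every face of $\cS_\sA$ is exposed, i.e., $\cS_\sA$ is perfect. An alternative route goes via Theorem \ref{thm:Vs=Ws}: given (ii), for any face $F$ pick $s\in\relint F$, so $\cV(s)=\cW(s)$ and Theorem \ref{thm:Vs=Ws} gives an exposed face $F'$ with $s\in\relint F'$; disjointness of relative interiors of distinct faces forces $F=F'$, so $F$ is exposed.

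There is no serious obstacle here; the statement is a direct repackaging once the dictionary $\cW\leftrightarrow\cF$ and $\cV\leftrightarrow\cE$ from the preceding proposition is in place, together with Theorem \ref{thm:Vs=Ws} (the substantive input) and the minimality of $\cE_s$ among exposed faces containing $s$. The only point requiring a moment's care is making sure to use that $s\in\relint\cF_s$ so that $\cF_s$ genuinely contains $s$ and the minimality of $\cE_s$ applies.
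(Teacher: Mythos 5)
Your proposal is correct and matches the paper's intent: the paper simply declares the corollary an immediate consequence of Theorem \ref{thm:Vs=Ws}, and your argument supplies exactly the routine details (the dictionary $\cW\leftrightarrow\cF$, $\cV\leftrightarrow\cE$, the minimality of $\cE_s$, and the lemma that every face is some $\cF_s$) needed to make that precise. Your alternative route via Theorem \ref{thm:Vs=Ws} together with the disjointness of relative interiors of distinct faces is precisely the chain the paper has in mind, so there is nothing to object to.
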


In the one-dimensional monomial case $\sA = \{1,x,x^2,\dots,x^d\}$ on a closed interval of $\rset$ we always have $\cV(s) = \cW(s)$, see e.g.\ \cite[Exm.\ 14]{didio17w+v+}. The first examples for which $\cW(s)\neq\cV(s)$ were \cite[Examples\ 38 and 39]{didio17w+v+}, see also \cite[Example 18.25]{schmudMomentBook}; all known examples of this kind were one-dimensional monomials with gaps. The following is the first example for polynomials without gaps.

\begin{exm}[Harris polynomial $\cV(s)\neq\cW(s)$]\label{exm:harris}
W.\ R.\ Harris \cite{harris99} proved that the polynomial
\begin{align}
h(x_0,x_1,x_2) =\;\; 16&(x_0^{10} + x_1^{10} + x_2^{10})\notag\\
-36&(x_0^8 x_1^2 + x_0^2 x_1^8 + x_0^8 x_2^2 + x_0^2 x_2^8 + x_1^8 x_2^2 + x_1^2 x_2^8)\notag\\
+20&(x_0^6 x_1^4 + x_0^4 x_1^6 + x_0^6 x_2^4 + x_0^4 x_2^6 + x_1^6 x_2^4 + x_1^4 x_2^6)\label{eq:harrisPoly}\\
+57&(x_0^6 x_1^2 x_2^2 + x_0^2 x_1^6 x_2^2 + x_0^2 x_1^2 x_2^6)\notag\\
-38&(x_0^4 x_4^4 x_2^2 + x_0^4 x_1^2 x_2^4 + x_0^2 x_1^4 x_2^4)\notag
\end{align}
in $\cB_{2,10}$ is nonnegative on $\pset^2$ and has the projective zero set
\begin{align*}
\cZ(h) = \{&(1,1,0)^*,(1,1,\sqrt{2})^*,(1,1,1/2)^*\} \\
= \{&z_1=(1,1,0),z_2=(1,-1,0),z_3=(1,0,1),z_4=(1,0,-1),z_5=(0,1,1),\\ &z_6=(0,1,-1),\\
&z_{7}=(1,1,1/2),z_{8}=(1,1,-1/2),z_{9}=(1,-1,1/2),z_{10}=(1,-1,-1/2),\\
&z_{11}=(1,1/2,1),z_{12}=(1,1/2,-1),z_{13}=(1,-1/2,1),z_{14}=(1,-1/2,-1),\\
&z_{15}=(1/2,1,1),z_{16}=(1/2,1,-1),z_{17}=(1/2,-1,1),z_{18}=(1/2,-1,-1),\\
&z_{19}=(1,1,\sqrt{2}),z_{20}=(1,1,-\sqrt{2}),z_{21}=(1,-1,\sqrt{2}),z_{22}=(1,-1,-\sqrt{2}),\\
&z_{23}=(1,\sqrt{2},1),z_{24}=(1,\sqrt{2},-1),z_{25}=(1,-\sqrt{2},1),z_{26}=(1,-\sqrt{2},-1),\\
&z_{27}=(\sqrt{2},1,1),z_{28}=(\sqrt{2},1,-1),z_{29}=(\sqrt{2},-1,1),z_{30}=(\sqrt{2},-1,-1)\}.
\end{align*}
Here the symbol $(a,b,c)^*$ denotes all permutations of $(a,b,c)$ including sign changes. Hence, $h$ has exactly $30$ projective zeros. Set $Z_k := \{1,\dots,z_k\}$. Note that the full rank of $DS_{k,\sB_{2,10}}$ is $|\sB_{2,10}| = 66$. Table \ref{tab:indexExHarris} shows the rank of $DS_{k,\sA}(\one,Z_k)$ for $Z_k$.

\begin{table}[h!]
\caption{Rank of $DS_{k,\sA}(\one,Z_k)$ of subsets $Z_k=\{z_1,\dots,z_k\}$ of the zero set of the Harris polynomial. $|\sB_{2,10}|=66$.}\label{tab:indexExHarris}\centering\vspace{0.2cm}
\begin{tabular}{c|cccccccccc}\hline\hline
            & $Z_1$ & $Z_2$ & $Z_3$ & $Z_4$ & $Z_5$ & $Z_6$ & $Z_7$ & $Z_8$ & $Z_9$ & $Z_{10}$\\ \hline
$\rank DS_{k,\sA}(\one,Z_k)$ & $3$     & $6$     & $9$     & $12$    & $15$    & $18$    & $21$    & $24$    & $27$    & $30$\\
increase    & $+3$  & $+3$  & $+3$  & $+3$  & $+3$  & $+3$  & $+3$  & $+3$  & $+2$  & $+1$\\ \hline\hline
            & $Z_{11}$ & $Z_{12}$ & $Z_{13}$ & $Z_{14}$ & $Z_{15}$ & $Z_{16}$ & $Z_{17}$ & $Z_{18}$ & $Z_{19}$ & $Z_{20}$\\ \hline
$\rank DS_{k,\sA}(\one,Z_k)$ & $33$       & $36$       & $39$       & $42$       & $45$       & $48$       & $51$       & $54$       & $57$     & $60$\\
increase    & $+3$     & $+3$     & $+3$     & $+3$     & $+3$     & $+3$     & $+3$     & $+3$     & $+3$   & $+3$\\ \hline\hline
            & $Z_{21}$ & $Z_{22}$ & $Z_{23}$ & $Z_{24}$ & $Z_{25}$ & $Z_{26}$ & $Z_{27}$ & $Z_{28}$ & $Z_{29}$ & $Z_{30}$\\ \hline
$\rank DS_{k,\sA}(\one,Z_k)$ & $62$       & $63$       & $65$     & $65$    & $65$    & $65$    & $65$    & $65$    & $65$    & $65$\\
increase    & $+2$     & $+1$     & $+2$   & $+0$  & $+0$  & $+0$  & $+0$  & $+0$  & $+0$  & $+0$\\ \hline\hline
\end{tabular}
\end{table}

In \cite[Exm.\ 63]{didio17Cara} we showed by using a computer algebra program that the set $\{s_{\sB_{2,10}}(z_i)\}_{i=1}^{30}$ is linearly independent, i.e., the exposed face of $h$ has dimension 30: $\dim \cS_{\sB_{2,10}}\cap H_{\vec{h}} = 30$.  Table \ref{tab:indexExHarris} shows that already the zeros $z_1,\dots,z_{23}$ give $\rank DS_{23,\sB_{2,10}}(\one,Z_{23})=65$. Hence each sequence $s = \sum_{i=1}^{23} c_i\cdot s_{\sB_{2,10}}(z_i)$ with $c_i>0$ gives $\cV(s)=\cZ(h)=\{z_1,\dots,z_{30}\}$. But  $\cW(s)=\{z_1,\dots,z_{23}\}$, so that $\cV(s)\neq \cW(s)$.

Since $\{s_{\sB_{2,10}}(z_i)\}_{i=1}^{30}$ is linearly independent, for any $I\subset \cZ(h)$  there is a polynomial $p_I\in\lin\sB_{2,10}$ such that\ $p_I(z)=0$ for all $z\in I$ and $p_I(z)\neq 0$ for  $z\in J:=\cZ(h)\setminus I$. Therefore, $\cW(s) = \cZ(h)\cap\cZ(p_{\cW(s)})$ in Theorem \ref{thm:WsZps}, i.e., $k = 1 < m - \cD_s -1 = 34$.
\end{exm}

An immediate consequence of Example \ref{exm:harris} is the following.

\begin{cor}\label{cor:a10b10}
\begin{enumerate}[a)]
\item $\cS_{\sA_{1,d}}$ and $\cS_{\sB_{1,2d}}$ are perfect for all $d\geq 1$.
\item $\cS_{\sA_{n,10}}$ and $\cS_{\sB_{n,10}}$ for $n\geq 2$ are not perfect.
\end{enumerate}
\end{cor}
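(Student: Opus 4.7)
By the corollary following Definition \ref{dfn:perfect}, it suffices in each case to decide whether $\cV(s) = \cW(s)$ holds for every moment sequence $s$.

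For part (a), every nonzero $p \in \pos(\sA_{1,d})$ (respectively every nonzero form in $\pos(\sB_{1,2d})$) has only finitely many real (resp.\ projective) zeros, each of even multiplicity, since it factors as a product of definite quadratic factors times a perfect square. Consequently $\cV(s) = \cZ(p)$ is finite whenever $s \in \partial^*\cS_\sA$, while $\cV(s) = \cX$ for $s \in \inter \cS_\sA$. In the finite case the Vandermonde vectors $\{s_\sA(x)\}_{x \in \cV(s)}$ are linearly independent, so $s$ has a unique decomposition as a combination of them. The plan is to invoke the classical theory of the univariate truncated moment problem (as cited in the paper just before Example \ref{exm:harris}) to conclude that all weights in this decomposition are strictly positive; this yields $\cV(s) = \cW(s)$ and hence perfectness.

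For part (b), the case $\cS_{\sB_{2,10}}$ is immediate from Example \ref{exm:harris}, which produces a sequence $s$ with $\cW(s) \subsetneq \cV(s)$. For $\cS_{\sA_{2,10}}$, I would dehomogenize by setting $\tilde h(x_1, x_2) := h(1, x_1, x_2) \in \pos(\sA_{2,10})$; the 28 projective zeros of $h$ with nonzero leading coordinate pass to distinct affine zeros of $\tilde h$, and the rank/dimension arithmetic of Example \ref{exm:harris} transfers almost verbatim to produce the required sequence. For $n \geq 3$ (in both the $\sA$ and $\sB$ cases) I would adjoin $\sum_{i=3}^n x_i^{10}$ to the (homogeneous or dehomogenized) Harris polynomial. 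This adjunction preserves nonnegativity, and since $\sum_{i=3}^n x_i^{10} = 0$ forces $x_3 = \cdots = x_n = 0$, the zero set of the enlarged polynomial is the image of the original zero set under the natural embedding. At each such embedded zero $z$, the column vector $s_\sA(z)$ has zero entries in every coordinate corresponding to a monomial involving $x_3, \ldots, x_n$, so the entire rank computation of Example \ref{exm:harris} carries through without modification.

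The chief technical obstacle in (b) is to ensure that the Harris-type normal vector $\vec p$ actually lies in $\relint \nor_\sA(s)$ for the constructed sequence $s$, so that Proposition \ref{prop:onep} identifies $\cV(s)$ with $\cZ(p)$ rather than a strictly smaller set. This amounts to checking that the exposed face $\cS_\sA \cap H_{\vec p}$ is the smallest exposed face containing $s$, and reduces to the linear-independence statement for the thirty column vectors $s_{\sB_{2,10}}(z_i)$ already verified in \cite[Exm.\ 63]{didio17Cara}.
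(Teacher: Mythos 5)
Your overall architecture (reduce everything to the criterion $\cV(s)=\cW(s)$, use Example \ref{exm:harris} for the projective two-variable case, and propagate it to the other cases) is the right one, but the justifications you give for parts (a) and (b) both have concrete holes. In part (a), the observation that the vectors $\{s_\sA(x)\}_{x\in\cV(s)}$ are linearly independent and that $s$ therefore has a unique nonnegative decomposition over them proves nothing: in Example \ref{exm:harris} the thirty vectors $s_{\sB_{2,10}}(z_i)$ are also linearly independent and the decomposition of $s$ over $\cV(s)$ is unique, yet seven of the weights vanish --- that is precisely how $\cW(s)\subsetneq\cV(s)$ arises. ``All weights are strictly positive'' is equivalent to the assertion $\cV(s)=\cW(s)$ being proved, so it cannot be deferred to unspecified classical theory. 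The univariate ingredient that actually does the work is a degree count: if $\cW(s)=W\subsetneq\cV(s)$, then $|W|<|\cV(s)|\le\lfloor d/2\rfloor$, so $q:=\prod_{x_0\in W}(\,\cdot\,-x_0)^2$ has degree at most $d-2$, lies in $\pos(\sA_{1,d})$, satisfies $L_s(q)=0$ (any atomic representing measure is supported in $W\subseteq\cZ(q)$), and hence forces $\cV(s)\subseteq\cZ(q)=W$, a contradiction; the binary-form case is identical with squares of linear forms padded by a positive definite factor. This construction --- the fact that every proper subset of $\cV(s)$ is exactly the zero set of some element of $\pos(\sA)\cap\cN(s)$ --- is the step your write-up is missing, and it is exactly what fails in two or more variables.

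In part (b), the case $\cS_{\sB_{2,10}}$ is indeed immediate from Example \ref{exm:harris}, but ``transfers almost verbatim'' conceals two genuine issues. First, for $\cS_{\sA_{2,10}}$: the supporting set $Z_{23}$ of the example contains $z_5=(0,1,1)$ and $z_6=(0,1,-1)$, which lie at infinity and have no affine counterpart, and Table \ref{tab:indexExHarris} says nothing about ranks over subsets of the $28$ affine zeros. You must therefore exhibit, by a new computation, a \emph{proper} subset $W$ of those $28$ points with $\rank DS_{|W|,\sA_{2,10}}(\one,W)=65$ before the argument closes; this is not guaranteed by the table. Second, for $n\ge3$ the adjunction of $\sum_{i\ge3}x_i^{10}$ is a good idea, but the claim that ``the entire rank computation carries through without modification'' is false as stated: $DS_{k,\sB_{n,10}}$ acquires new derivative columns $\partial_i s_{\sB_{n,10}}$, $i\ge 3$, whose entries at monomials linear in $x_i$ are nonzero, so both the rank and the kernel of $DS^T$ change. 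The correct repair is soft rather than computational: any $q\in\pos(\sB_{n,10})$ with $L_s(q)=0$ restricts to $q|_{x_3=\dots=x_n=0}\in\pos(\sB_{2,10})$ annihilated by the corresponding two-variable sequence, so the $n=2$ case forces $q$ to vanish on all thirty embedded zeros, giving $\cV(s)=\cZ(h)\times\{0\}\supsetneq\cW(s)$. Finally, your closing paragraph conflates two different computations: the linear independence of the thirty vectors $s_{\sB_{2,10}}(z_i)$ (a $66\times 30$ rank statement, which only yields $\dim\cE_s=30$) and the identity $\rank DS_{23,\sB_{2,10}}(\one,Z_{23})=65$ (a $66\times 69$ rank statement). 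It is the latter, not the former, that shows $\cN(s)$ is the ray through $h$ and hence pins down $\cV(s)=\cZ(h)$.
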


That $\cS_{\sA_{n,10}}$ and $\cS_{\sB_{n,10}}$ are not perfect indicates that also $\cS_{\sA_{n,d}}$ and $\cS_{\sB_{n,2d}}$ for $n\geq 2$ and $d\geq 1$ are not perfect (see Problem \ref{open:perfect}).

\begin{prop}\label{thm:endingat2}
If $s\in\cS_\sA$ such that the set\ $\cV(s)$ is finite, then 
\[\cW(s) = \cZ(p) \quad\text{or}\quad \cW(s) = \cZ(p)\cap\cZ(q)\]
for some $p\in\pos(\sA)$ and $q\in\lin\sA$ indefinite.
\end{prop}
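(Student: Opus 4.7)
The plan is to start from the representation given by Theorem \ref{thm:WsZps}, namely
\[
\cW(s) \;=\; \cZ(p)\,\cap\,\cZ(p_1)\,\cap\cdots\cap\,\cZ(p_k),
\]
with $p\in\pos(\sA)$ such that $\cV(s)=\cZ(p)$ and $p_1,\dots,p_k\in\lin\sA$, and then to collapse the extra polynomials $p_1,\dots,p_k$ into a single indefinite $q\in\lin\sA$. The case $\cW(s)=\cV(s)$ is trivial: take $\cW(s)=\cZ(p)$. So assume $\cW(s)\subsetneq\cV(s)$; by hypothesis $\cV(s)$, hence also $\cV(s)\setminus\cW(s)$, is finite.

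First I would produce a single $q\in\lin\sA$ that vanishes on $\cW(s)$ but not on any point of $\cV(s)\setminus\cW(s)$. Consider the linear subspace
\[
V \;:=\; \{\,r\in\lin\sA : r|_{\cW(s)}=0\,\}.
\]
For each $w\in\cV(s)\setminus\cW(s)$, the evaluation $\mathrm{ev}_w:V\to\rset$, $r\mapsto r(w)$, is a linear functional. Its kernel $V_w$ is a \emph{proper} subspace of $V$, because from $\cW(s)=\cZ(p)\cap\cZ(p_1)\cap\cdots\cap\cZ(p_k)$ and $w\notin\cW(s)$ we know that some $p_i\in V$ satisfies $p_i(w)\neq 0$. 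Since $\cV(s)\setminus\cW(s)$ is finite and a real vector space is never the union of finitely many proper subspaces, there exists $q\in V$ with $q(w)\neq 0$ for every $w\in\cV(s)\setminus\cW(s)$. This $q$ visibly satisfies $\cZ(q)\cap\cV(s)=\cW(s)$, hence $\cW(s)=\cZ(p)\cap\cZ(q)$.

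Next I would verify that this $q$ must be indefinite. Suppose, for contradiction, that $q\in\pos(\sA)$ (the case $-q\in\pos(\sA)$ is identical). By Richter's Theorem \ref{thm:richter} there is a $k$-atomic representing measure $\mu=\sum_{i=1}^\ell c_i\delta_{x_i}\in\fM_\sA(s)$ with $c_i>0$; from the definition of $\cW(s)$ (or Lemma \ref{lem:setofatoms}) every atom $x_i$ lies in $\cW(s)$, and hence $q(x_i)=0$. Therefore $L_s(q)=\sum c_i q(x_i)=0$, i.e.\ $q\in\cN(s)$. But then by the definition of $\cV(s)$ we would have $\cV(s)\subseteq\cZ(q)$, contradicting $q(w)\neq 0$ for $w\in\cV(s)\setminus\cW(s)$. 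Thus neither $q$ nor $-q$ lies in $\pos(\sA)$, so $q$ is indefinite.

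The main conceptual step is not hard once Theorem \ref{thm:WsZps} is in place — it is the linear-algebraic trick of avoiding finitely many hyperplanes inside $V$ to obtain a single $q$. The subtlety lies in showing the resulting $q$ is indefinite; the key is the implication ``$q\in\pos(\sA)$ and $q|_{\cW(s)}=0$ $\Rightarrow$ $q\in\cN(s)$'', which I would obtain via Richter's Theorem applied to an atomic representing measure whose atoms must lie in $\cW(s)$. Everything else is formal bookkeeping.
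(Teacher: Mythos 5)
Your proof is correct, but it follows a genuinely different route from the paper's. The paper's argument is a (very terse) piece of convex geometry: since $\cV(s)$ is finite, the exposed face $\cE_s=\conv\cone s_\sA(\cV(s))$ is a finitely generated, hence polyhedral, cone, so every face of it is exposed; in particular $\cF_s$ is cut out of $\cE_s$ by a single supporting hyperplane $H_w$, and $q=\langle w,s_\sA(\cdot)\rangle$ together with the $p$ from Proposition \ref{prop:onep} gives $\cW(s)=\cZ(p)\cap\cZ(q)$ (the paper then just cites Theorem \ref{thm:Vs=Ws} and does not spell out why $q$ is indefinite). You instead work entirely inside the function space: you take the subspace $V=\{r\in\lin\sA: r|_{\cW(s)}=0\}$, observe via Theorem \ref{thm:WsZps} that each evaluation at a point of $\cV(s)\setminus\cW(s)$ cuts out a proper subspace of $V$, and use the fact that a real vector space is not a finite union of proper subspaces to produce one $q\in V$ nonvanishing on all of $\cV(s)\setminus\cW(s)$. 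What your approach buys is that it is self-contained at the level of linear algebra and, importantly, it makes explicit the one point the paper glosses over: the indefiniteness of $q$, which you derive cleanly from ``$q$ (or $-q$) in $\pos(\sA)$ and vanishing on $\cW(s)$ implies $q\in\cN(s)$, hence $\cV(s)\subseteq\cZ(q)$'', a contradiction. What the paper's approach buys is conceptual brevity and the geometric insight that the obstruction to $k=1$ in Theorem \ref{thm:WsZps} is exactly the failure of $\cE_s$ to be polyhedral. Both arguments are sound; yours is the more complete write-up.
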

\begin{proof}
Set $\cS := \conv \cone s_\sA(\cV(s))$. Then $\cS_\sA$ is a simplicial cone, i.e., every extreme face is an exposed face and the assertion follows from Theorem \ref{thm:Vs=Ws}.
\end{proof}

The conclusion of the previous theorem does not hold if $\cV(s)$ is infinite.

As shown by Example \ref{exm:harris}, the set $\cW(s)$ of atoms can be  smaller than   $\cV(s)$. To repair this the procedure  of defining  $\cV(s)$ will be iterated. 

Let $L$ be a linear functional on $\lin \sA$.
We define inductively linear subspaces $\cN_k(L)$, $k\in \nset,$ of $\lin \sA$ and  subsets $\cV_j(L)$, $j\in \nset_0,$ of $\cX$ by $V_0(L)=\cX$, 
\begin{align*}
\cN_k(L)&:=\{ p\in \lin \sA: L(p)=0,~~ p(x)\geq 0 ~~{\rm for}~~x\in \cV_{k-1}(L)\, \},\\
\cV_j(L)&:=\{ x\in \cX: p(x)=0~~{\rm for}~p\in \cN_j(L)\}.
\end{align*}If $\cV_k(L)$ is empty for some $k$, we set $\cV_j(L)=\emptyset$ for  $j\geq k$. 
Clearly, if $s\in \cS_\sA$, then $\cV_1(L_s)=\cV(s)$. From this definition it is obvious that $\cV_j(L)\subseteq \cV_{j-1} (L)$ for $j\in \nset.$

\begin{dfn}\label{corevarietyarb} 
The \emph{core variety} $\cV_C(L)$ of the linear functional $L$ on $\lin {\sA}$ is 
\begin{align}\label{corevarA} 
\cV_C(L):=\bigcap_{j=0}^\infty \cV_j(L).
\end{align}
\end{dfn}

The core variety was introduced by L. Fialkow \cite{fialkow17} and studied in \cite{didio17w+v+}, \cite{schmudMomentBook},  \cite{blek18coreArxiv}. If $\sA$ consists of real polynomials and $\cX=\rset^d$, then $\cV_C(L)$ is  the zero set of real polynomials and hence a real algebraic set in $\rset^d$.

Let $L_s$ be the Riesz functional of a moment sequence $s\in \cS_\sA$. Then, as proved in \cite[Theorem 33]{didio17w+v+}, see also \cite[Theorem 1.49]{schmudMomentBook}, we have $\cV_C(L_s)=\cW(s)$, that is, the core variety coincides with the set of atoms, see the most general form Theorem \ref{thm:WsZps}. Further, there exists a $k\in \nset_0$ such that $\cV_C(L_s)=\cV_k(L_s)$. The core variety is treated in \cite[Sections 1.2.5,  18.3]{schmudMomentBook}.

\smallskip

Let us resume the investigation of the face structure of the moment cone.

\begin{dfn}
For  $s\in\cS_\sA$ we define
\[\Gamma_s := \{f\in\lin\sA \,|\, \cW(s)\subseteq \cZ(f) \} \qquad\text{and}\qquad \gamma_s := \dim\Gamma_s.\]
\end{dfn}

\begin{prop}
$\cD_s = |\sA| - \gamma_s$.
\end{prop}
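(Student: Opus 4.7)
The plan is to identify $\lin\sA$ with $\rset^m$ via the basis $\sA=\{a_1,\dots,a_m\}$, sending $f = \sum_i c_i a_i$ to $\vec f = (c_1,\dots,c_m) \in \rset^m$, and then simply compute orthogonal complements.

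First I observe that under this identification, for every $x\in\cX$ one has $f(x) = \langle \vec f, s_\sA(x)\rangle$, so that
\begin{equation*}
\cW(s)\subseteq \cZ(f) \quad\Longleftrightarrow\quad \langle \vec f, s_\sA(x)\rangle = 0 \ \ \forall x\in\cW(s) \quad\Longleftrightarrow\quad \vec f \perp \lin\, s_\sA(\cW(s)).
\end{equation*}
Hence $\Gamma_s$ corresponds, under the chosen identification, to the orthogonal complement $\bigl(\lin\, s_\sA(\cW(s))\bigr)^{\perp}$ in $\rset^m$, and consequently
\begin{equation*}
\gamma_s = \dim \Gamma_s = m - \dim \lin\, s_\sA(\cW(s)).
\end{equation*}

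Next I would invoke the previously established identity $\cF_s = \conv\cone\, s_\sA(\cW(s))$. Since $\cF_s$ is a convex cone (containing $0$), its dimension as a convex set equals the dimension of its linear span, and the linear span of a convex conical hull equals the linear span of its generating set. Thus
\begin{equation*}
\cD_s = \dim \cF_s = \dim \lin \cF_s = \dim \lin\, s_\sA(\cW(s)).
\end{equation*}

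Combining the two displays yields $\cD_s + \gamma_s = m = |\sA|$, which is the claim. The proof is essentially a single linear-algebra duality; the only subtlety to check is that $\dim \cF_s = \dim \lin\, s_\sA(\cW(s))$, which is immediate from the fact that a convex cone always has the same dimension as its linear hull together with the generating description of $\cF_s$. There is no real obstacle here, so the proof should be short.
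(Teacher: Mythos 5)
Your proof is correct and follows essentially the same route as the paper: the paper's one-line argument ("$f\in\Gamma_s$ iff $\cF_s\subseteq\aff\cF_s\subseteq H_{\vec f}$") is exactly your observation that $\Gamma_s$ is the orthogonal complement of $\lin s_\sA(\cW(s))=\lin\cF_s$, so the dimensions are complementary in $\rset^m$. You have merely spelled out the details (including the identity $\cF_s=\conv\cone s_\sA(\cW(s))$ and the fact that a cone's dimension equals that of its linear hull) that the paper leaves implicit.
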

\begin{proof}
Clearly, $f\in\Gamma_s$ if and only if $\cF_s\subseteq \aff\cF_s \subseteq H_{\vec{f}}$. This gives the assertion.
\end{proof}

\begin{prop}\label{prop:DsBoundBoundary}
Let $\sA = \sA_{n,d}$ on $\cX=\rset^n$ or $\sA=\sB_{n,2d}$ on $\cX=\pset^n$. If $s\in\partial^*\cS_\sA$, then
\[\cD_s \leq |\sA| - n-1.\]
\end{prop}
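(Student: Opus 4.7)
The plan is to show $\gamma_s \geq n+1$; the preceding proposition then gives $\cD_s = |\sA| - \gamma_s \leq |\sA| - n - 1$. Since $s \in \partial^* \cS_\sA$, there exists $\vec{p} \in \nor_\sA(s) \setminus \{0\}$, and $p := \langle \vec{p}, s_\sA(\,\cdot\,)\rangle$ belongs to $\pos(\sA) \setminus \{0\}$ with $L_s(p) = 0$, so $\cW(s) \subseteq \cZ(p)$. The case $s = 0$ is trivial ($\cW(0) = \emptyset$ yields $\gamma_s = |\sA|$), and otherwise $p$ is nonconstant: in the affine case a nonzero constant would force $L_s(p) = p \cdot s_0 = 0$ hence $s = 0$, and in the projective case $p$ is automatically homogeneous of degree $2d \geq 2$.

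The crux is a gradient observation. At every $x_0 \in \cZ(p)$, viewed as a point of the ambient open set $\rset^n$ (affine case) or of $\rset^{n+1} \setminus \{0\}$ (projective case), the nonnegative polynomial $p$ attains its global minimum $0$, so $\nabla p(x_0) = 0$. Hence every partial derivative $\partial_i p$ vanishes identically on $\cZ(p) \supseteq \cW(s)$. Non-constancy of $p$, combined with Euler's identity $\sum_i x_i \partial_i p = 2d \cdot p$ in the projective case, ensures that at least one partial $q := \partial_i p$ is nonzero; this produces a nonzero polynomial in $\rset[x_0,\dots,x_n]$ (respectively $\rset[x_1,\dots,x_n]$) vanishing on $\cW(s)$.

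I then exhibit $n+1$ linearly independent elements of $\Gamma_s$ by multiplying $q$ by coordinate functions. In the affine case $\sA_{n,d}$, take $q, x_1 q, x_2 q, \ldots, x_n q$, each of total degree $\leq d$ and hence in $\lin \sA$. In the projective case $\sB_{n,2d}$, take $x_0 q, x_1 q, \ldots, x_n q$, each homogeneous of degree $2d$ and hence in $\lin \sA$. Since $q$ vanishes on $\cW(s)$, so do all these products. Linear independence is the easy half: any relation factors as $(\text{linear form}) \cdot q = 0$ in the integral domain $\rset[x_1, \ldots, x_n]$ (resp.\ $\rset[x_0, \ldots, x_n]$), and since $q \neq 0$ the linear form must vanish, forcing every coefficient to be zero. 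The only mildly subtle ingredient is the guarantee that $\nabla p \not\equiv 0$; this rests on Euler in the projective setting and on the exclusion of constant $p$ in the affine setting handled in the first paragraph.
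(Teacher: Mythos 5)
Your proof is correct and follows essentially the same route as the paper's: produce a nonzero $p\in\pos(\sA)$ with $\cW(s)\subseteq\cZ(p)$ from a supporting hyperplane at $s$, note that the partial derivatives of $p$ vanish on $\cZ(p)$, and multiply a nonvanishing $\partial_i p$ by the coordinate functions to get $n+1$ linearly independent elements of $\Gamma_s$, whence $\cD_s=|\sA|-\gamma_s\leq|\sA|-n-1$. You are in fact slightly more careful than the paper, which omits the degenerate case of constant $p$ (equivalently $s=0$) and the justification that some $\partial_i p\not\equiv 0$.
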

\begin{proof}
Since $s\in\partial^*\cS_\sA$, there is a  $p\in\pos(\sA)\setminus\{0\}$ such that\ $\cW(s)\subseteq\cZ(p)$. Since $p\neq 0$, there is a $i$ such that\ $\partial_i p \neq 0$ and the functions $\partial_i p$ (resp. $x_0\partial_i p$ in the projective case), $x_1 \partial_i p,\dots,x_n\partial_i p$ are non-zero, linearly independent, and in $\Gamma_s$. Hence, $\gamma_s \geq n+1$ and $\cD_s\leq |\sA| - \gamma_s \leq |\sA| - n-1$.
\end{proof}

The preceding shows that the face $\cF_s$ of $s$ and its dimension $\cD_s$ play an important role  in the study of the moment cone $\cS_\sA$ and its boundary. Proposition \ref{prop:DsBoundBoundary} contains an upper bound for $\cD_s$. To give some lower bounds we consider two special cases:
\begin{enumerate}
\item[a)] $\cX = \rset^n$ (or $\pset^n$) with $\sA = \sA_{n,2d}$ (or $\sB_{n,2d}$, respectively). Set
\begin{equation}\label{eq:p}
\fp := p_1 + \dots + p_n \qquad\text{with}\qquad p_i(x) := \prod_{j=0}^{d-1} (x_i-j)^2,
\end{equation}
i.e., $\fp\geq 0$ on $\rset^n$ and $\cZ(\fp) = \{0,1,\dots,d-1\}^n$ with $\#\cZ(\fp) = d^n$.

\item[b)] $\cX = [0,d]^n$ with $\sA = \sA_{n,2d}$. Set
\begin{equation}\label{eq:q}
\fq := q_1 + \dots + q_n \qquad\text{with}\qquad q_i(x) := x\cdot \prod_{j=1}^{d-1} (x_i - j)^2 \cdot (d-x),
\end{equation}
i.e., $\fq \geq 0$ on $[0,d]^n$ and $\cZ(\fq) = \{0,1,\dots,d\}^n$ with $\#\cZ(\fq) = (d+1)^n$.
\end{enumerate}
Both cases are the simplest cases of non-negative polynomials with large numbers, but  finitely many zeros. The first case works on the whole space $\rset^n$ (or $\pset^n$). The second case on $[0,d]^n$ is important in numerical analysis. Since $\cZ(\fp)$ and $\cZ(\fq)$ are finite, we can set $s := \sum_{x\in\cZ(\fp)} s_\sA(x)$ and $s': = \sum_{x\in\cZ(\fq)} s_\sA(x)$. Then 
\[\cD_s = \fR_{n,2d} := \rank (s_{\sA_{n,2d}}(x))_{x\in\cZ(\fp)}\]
and
\[\cD_{s'} = \fR'_{n,2d} := \rank (s_{\sA_{n,2d}}(x))_{x\in\cZ(\fq)}.\]
In addition, we set $\fw_{n,2d} := \frac{\fR_{n,2d}}{\dim\cS_\sA}$ and $\fz_{n,2d} := \frac{\fR_{n,2d}}{|\cZ(\fp)|}$. The numbers $\fw'_{n,2d}$ and $\fz'_{n,2d}$ are defined in the same way for the second case b). By these definitions, $\fw_{n,2d}$ and $\fz_{n,2d}$ are  the ratios of the dimension of the exposed face $\cF_s$ by the dimension of $\dim\cS_\sA = |\sA_{n,2d}| = \left(\begin{smallmatrix} n+2d\\ n\end{smallmatrix}\right)$ and the cardinality of the zero set $\cZ(\fp)$, respectively.

For $n=1$ we can use the formula for the Vandermonde determinant and  obtain the following.

\begin{lem}
\begin{enumerate}[i)]
\item $\fR_{1,2d} = d$, $\fw_{1,2d} = \frac{d}{2d+1}$, and $\fz_{1,2d} = 1$.
\item $\fR'_{1,2d} = d+1$, $\fw'_{1,2d} = \frac{d+1}{2d+1}$, and $\fz'_{1,2d} = 1$.
\end{enumerate}
\end{lem}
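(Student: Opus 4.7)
The plan is simply to reduce both parts of the lemma to the nonvanishing of a Vandermonde determinant.

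For part (i), by the definitions I would write the matrix $(s_{\sA_{1,2d}}(x))_{x\in\cZ(\fp)}$ explicitly: its columns are indexed by the $d$ points of $\cZ(\fp)=\{0,1,\dots,d-1\}$ and its rows by the monomials $1,x,\dots,x^{2d}$, so it is the $(2d+1)\times d$ matrix $M$ with entries $M_{i,j}=j^i$ for $i=0,\dots,2d$ and $j=0,\dots,d-1$. Since $M$ has only $d$ columns, its rank is at most $d$. To obtain the reverse inequality I would select the top $d$ rows, giving the classical Vandermonde matrix $V(0,1,\dots,d-1)$ with determinant $\prod_{0\leq j<k\leq d-1}(k-j)\neq 0$, which forces $\rank M = d$. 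This yields $\fR_{1,2d}=d$, and then the definitions of $\fw_{1,2d}$ and $\fz_{1,2d}$ give $\fw_{1,2d}=d/|\sA_{1,2d}|=d/(2d+1)$ and $\fz_{1,2d}=d/|\cZ(\fp)|=d/d=1$.

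For part (ii), the argument is the same up to the size of the zero set. Here $\cZ(\fq)=\{0,1,\dots,d\}$ has cardinality $d+1$, so the matrix $(s_{\sA_{1,2d}}(x))_{x\in\cZ(\fq)}$ is the $(2d+1)\times(d+1)$ matrix with entries $j^i$, $i=0,\dots,2d$, $j=0,\dots,d$. Taking the top $d+1$ rows produces the Vandermonde matrix $V(0,1,\dots,d)$, which is invertible, so the rank equals $d+1$. Hence $\fR'_{1,2d}=d+1$, and then $\fw'_{1,2d}=(d+1)/(2d+1)$ and $\fz'_{1,2d}=(d+1)/(d+1)=1$.

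There is no real obstacle here; the only thing to watch is that $2d+1\geq d+1$ (so that enough rows are available to extract a full-size Vandermonde block), which is automatic for $d\geq 0$.
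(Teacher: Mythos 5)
Your proof is correct and is exactly the argument the paper intends: the paper's entire justification is the remark that ``for $n=1$ we can use the formula for the Vandermonde determinant,'' and your extraction of the top $d\times d$ (resp.\ $(d+1)\times(d+1)$) Vandermonde block at the distinct nodes $0,1,\dots,d-1$ (resp.\ $0,1,\dots,d$) is precisely that computation, with the ratios $\fw$ and $\fz$ then following directly from the definitions. Nothing further is needed.
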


For $n=2$, C.\ Riener and M.\ Schweighofer in \cite{rienerOptima} proved  the following result.

\begin{lem}[{\cite[Lem.\ 8.6]{rienerOptima}}]
$\fR_{2,2d} = d^2$, $\fw_{2,2d} = \frac{d^2}{(d+1)(2d+1)}$, and $\fz_{2,2d} = 1$.
\end{lem}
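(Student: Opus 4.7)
The plan is to compute $\fR_{2,2d}$ via a Kronecker product of univariate Vandermonde matrices. By definition, $\fR_{2,2d}$ is the rank of the $|\sA_{2,2d}| \times d^2$ matrix whose columns are $s_{\sA_{2,2d}}(x)$ as $x$ ranges over the $d^2$ grid points $\cZ(\fp) = \{0,1,\dots,d-1\}^2$. The upper bound $\fR_{2,2d} \leq d^2$ is automatic from the number of columns, so the content of the statement lies in establishing the matching lower bound $\fR_{2,2d} \geq d^2$.

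For the lower bound, I would extract the square submatrix obtained by keeping only the rows indexed by the monomials $x_1^a x_2^b$ with $0 \leq a, b \leq d-1$. These lie in $\sA_{2,2d}$ because $a+b \leq 2(d-1) \leq 2d$, and the resulting $d^2 \times d^2$ matrix has entries $i^a j^b$ where $(a,b)$ labels rows and $(i,j) \in \{0,\dots,d-1\}^2$ labels columns. This is precisely the Kronecker product $V \otimes V$, where $V = (i^a)_{0 \leq a,i \leq d-1}$ is the univariate Vandermonde matrix on the distinct nodes $0,1,\dots,d-1$. Since $\det V = \prod_{0 \leq i<j \leq d-1}(j-i) \neq 0$, one has $\det(V \otimes V) = (\det V)^{2d} \neq 0$, hence the submatrix is invertible and $\fR_{2,2d} \geq d^2$.

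Combining the two bounds yields $\fR_{2,2d} = d^2$. The two ratios are then immediate from the definitions: $\fz_{2,2d} = \fR_{2,2d}/|\cZ(\fp)| = d^2/d^2 = 1$, while $\dim \cS_{\sA_{2,2d}} = |\sA_{2,2d}| = (d+1)(2d+1)$ gives $\fw_{2,2d} = d^2/((d+1)(2d+1))$.

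I do not anticipate a serious obstacle; the key structural observation is that $\{0,\dots,d-1\}^2$ is a tensor product of two sets of $d$ distinct points, so bivariate interpolation on it factors into two independent univariate Vandermonde problems, and the relevant bidegree-$(d{-}1,d{-}1)$ monomials safely sit inside $\sA_{2,2d}$ because $2(d-1) \leq 2d$. The same argument would in fact extend verbatim to $\fR_{n,2d} \geq d^n$ on the grid $\{0,\dots,d-1\}^n$ for any $n$ with $n(d-1) \leq 2d$ (i.e.\ the regime where the tensor-product monomials still fit into $\sA_{n,2d}$), but this constraint fails for $n \geq 3$, which is consistent with Riener and Schweighofer's result being stated only for $n=2$.
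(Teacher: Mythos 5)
Your proof is correct. Note that the paper itself gives no proof of this lemma -- it is quoted verbatim from Riener--Schweighofer \cite[Lem.\ 8.6]{rienerOptima} -- so there is nothing internal to compare against; but your argument is exactly the standard one and, up to dualization, the same as theirs: your observation that the $d^2\times d^2$ submatrix on the bidegree-$(d-1,d-1)$ monomials is $V\otimes V$ with $V$ an invertible univariate Vandermonde matrix is equivalent to saying that the grid $\cZ(\fp)=\{0,\dots,d-1\}^2$ admits Lagrange interpolation by polynomials of degree $\leq 2d-2$, namely $\prod_{k\neq i}(x_1-k)\prod_{l\neq j}(x_2-l)$, so the $d^2$ point evaluations are linearly independent on $\cA_{2,2d}$. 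The bookkeeping for $\fw_{2,2d}$ and $\fz_{2,2d}$ is right, since $|\sA_{2,2d}|=\binom{2d+2}{2}=(d+1)(2d+1)$. One small quibble with your closing aside: the condition $n(d-1)\leq 2d$ does not fail for all $n\geq 3$; for $n=3$ it holds precisely when $d\leq 3$, and indeed the paper's Table 2 shows $\fz_{3,2d}=100\%$ exactly for $d=1,2,3$ before dropping at $d=4$ -- so your tensor-product bound captures slightly more than you claim, and is consistent with the numerical data.
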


For $n \geq 3$ very little is known about these numbers. In table \ref{tab:Ds} we collect several numerical examples which have been calculated by a computer algebra program,  see also fig.\ \ref{fig:lower}.

\begin{table}[!p]\center\small
\caption{Values of $\cD_{s} = \fR_{n,2d}$, $\fw_{n,2d}$, and $\fz_{n,2d}$ for $\sA_{n,2d}$ on $\rset^n$; as well as $\cD_{s'} = \fR'_{n,2d}$, $\fw'_{n,2d}$, and $\fz'_{n,2d}$ for $\sA_{n,2d}$ on $[0,d]^n$ calculated by a computer algebra program.}\label{tab:Ds}
\begin{tabular}{rrr|rrrr|rrrr}
\hline\hline
$n$&$d$&$|\sA_{n,2d}|$ & $|\cZ(\fp)|$ & $\fR_{n,2d}$ & $\fw_{n,2d}$ & $\fz_{n,2d}$ & $|\cZ(\fq)|$ & $\fR'_{n,2d}$ & $\fw'_{n,2d}$ & $\fz'_{n,2d}$\\\hline
 3 & 1 &    10 &     1 &    1 & 10.0\% & 100.0\% &     8 &             7 &        70.0\% &        87.5\%\\
   & 2 &    35 &     8 &    8 & 22.9\% & 100.0\% &    27 &            23 &        65.7\% &        85.2\%\\
   & 3 &    84 &    27 &   27 & 32.1\% & 100.0\% &    64 &            54 &        64.3\% &        84.4\%\\
   & 4 &   165 &    64 &   63 & 38.2\% &  98.4\% &   125 &           105 &        63.6\% &        84.0\%\\
   & 5 &   286 &   125 &  121 & 42.3\% &  96.8\% &   216 &           181 &        63.3\% &        83.8\%\\
   & 6 &   455 &   216 &  206 & 45.3\% &  95.4\% &   343 &           287 &        63.1\% &        83.7\%\\
   & 7 &   680 &   343 &  323 & 47.5\% &  94.2\% &   512 &           428 &        62.9\% &        83.6\%\\
   & 8 &   969 &   512 &  477 & 49.2\% &  93.2\% &   729 &           609 &        62.8\% &        83.5\%\\
   & 9 &  1330 &   729 &  673 & 50.6\% &  92.3\% &  1000 &           835 &        62.8\% &        83.5\%\\
   &10 &  1771 &  1000 &  916 & 51.7\% &  91.6\% &  1331 &          1111 &        62.7\% &        83.5\%\\
   &11 &  2300 &  1331 & 1211 & 52.7\% &  91.0\% &  1728 &          1442 &        62.7\% &        83.4\%\\
   &12 &  2925 &  1728 & 1563 & 53.4\% &  90.5\% &  2197 &          1833 &        62.7\% &        83.4\%\\
   &13 &  3654 &  2197 & 1977 & 54.1\% &  90.0\% &  2744 &          2289 &        62.6\% &        83.4\%\\
   &14 &  4495 &  2744 & 2458 & 54.7\% &  89.6\% &  3375 &          2815 &        62.6\% &        83.4\%\\
   &15 &  5456 &  3375 & 3011 & 55.2\% &  89.2\% &  4096 &          3416 &        62.6\% &        83.4\%\\\hline
 4 & 1 &    15 &     1 &    1 &  6.6\% & 100.0\% &    16 &            11 &        73.3\% &        68.8\%\\
   & 2 &    70 &    16 &   16 & 22.9\% & 100.0\% &    81 &            50 &        71.4\% &        61.7\%\\
   & 3 &   210 &    81 &   76 & 36.2\% &  93.8\% &   256 &           150 &        71.4\% &        58.6\%\\
   & 4 &   495 &   256 &  221 & 44.6\% &  86.3\% &   625 &           355 &        71.7\% &        56.8\%\\
   & 5 &  1001 &   625 &  503 & 50.2\% &  80.5\% &  1296 &           721 &        72.0\% &        55.6\%\\
   & 6 &  1820 &  1296 &  986 & 54.2\% &  76.1\% &  2401 &          1316 &        72.3\% &        54.8\%\\
   & 7 &  3060 &  2401 & 1746 & 57.1\% &  72.7\% &  4096 &          2220 &        72.5\% &        54.2\%\\
   & 8 &  4845 &  4096 & 2871 & 59.3\% &  70.1\% &  6561 &          3525 &        72.8\% &        53.7\%\\\hline
 5 & 1 &    21 &     1 &    1 &  4.7\% & 100.0\% &    32 &            16 &        76.2\% &        50.0\%\\
   & 2 &   126 &    32 &   31 & 24.6\% &  96.9\% &   243 &            96 &        76.2\% &        39.5\%\\
   & 3 &   462 &   243 &  192 & 41.6\% &  79.0\% &  1024 &           357 &        77.3\% &        34.9\%\\
   & 4 &  1287 &  1024 &  667 & 51.8\% &  65.1\% &  3125 &          1007 &        78.2\% &        32.2\%\\
   & 5 &  3003 &  3125 & 1753 & 58.4\% &  56.1\% &  7776 &          2373 &        79.0\% &        30.5\%\\
   & 6 &  6188 &  7776 & 3888 & 62.8\% &  50.0\% & -- & -- & -- & --\\
   & 7 & 11628 & 16807 & 7678 & 66.0\% &  45.7\% & -- & -- & -- & --\\\hline
 6 & 1 &    28 &     1 &    1 &  3.6\% & 100.0\% &    64 &            22 &        78.6\% &        34.4\%\\
   & 2 &   210 &    64 &   57 & 27.1\% &  89.1\% &   729 &           168 &        80.0\% &        23.0\%\\
   & 3 &   924 &   729 &  435 & 47.1\% &  59.7\% &  4096 &           756 &        81.8\% &        18.5\%\\
   & 4 &  3003 &  4096 & 1758 & 58.5\% &  42.9\% & 15625 &          2499 &        83.2\% &        16.0\%\\
   & 5 &  8008 & 15625 & 5251 & 65.6\% &  33.6\% & -- & -- & -- & --\\\hline
 7 & 1 &    36 &     1 &    1 &  2.2\% & 100.0\% &   128 &            29 &        80.6\% &        22.7\%\\
   & 2 &   330 &   128 &   99 & 30.0\% &  77.3\% &  2187 &           274 &        83.0\% &        12.5\%\\
   & 3 &  1716 &  2187 &  897 & 52.3\% &  41.0\% & 16384 &          1464 &        85.3\% &         8.9\%\\
   & 4 &  6435 & 16384 & 4146 & 64.4\% &  25.3\% & -- & -- & -- & --\\\hline
 8 & 1 &    45 &     1 &    1 &  2.2\% & 100.0\% &   256 &            37 &        82.2\% &        14.5\%\\
   & 2 &   495 &   256 &  163 & 32.9\% &  63.7\% &  6561 &           423 &        85.5\% &         6.4\%\\
   & 3 &  3003 &  6561 & 1711 & 57.0\% &  26.1\% & -- & -- & -- & --\\\hline
 9 & 1 &    55 &     1 &    1 &  1.8\% & 100.0\% &   512 &            46 &        83.6\% &         9.0\%\\
   & 2 &   715 &   512 &  256 & 35.8\% &  50.0\% & 19683 &           625 &        87.4\% &         3.2\%\\
   & 3 &  5005 & 19683 & 3061 & 61.2\% &  15.6\% & -- & -- & -- & --\\\hline
10 & 1 &    66 &     1 &    1 &  1.5\% & 100.0\% &  1024 &            56 &        84.8\% &         5.5\%\\
   & 2 &  1001 &  1024 &  386 & 38.6\% &  37.7\% & 59049 &           891 &        89.0\% &   1.5\%\\\hline\hline
\end{tabular}
\end{table}

\begin{figure}[ht!]\centering
\includegraphics[width=0.75\columnwidth]{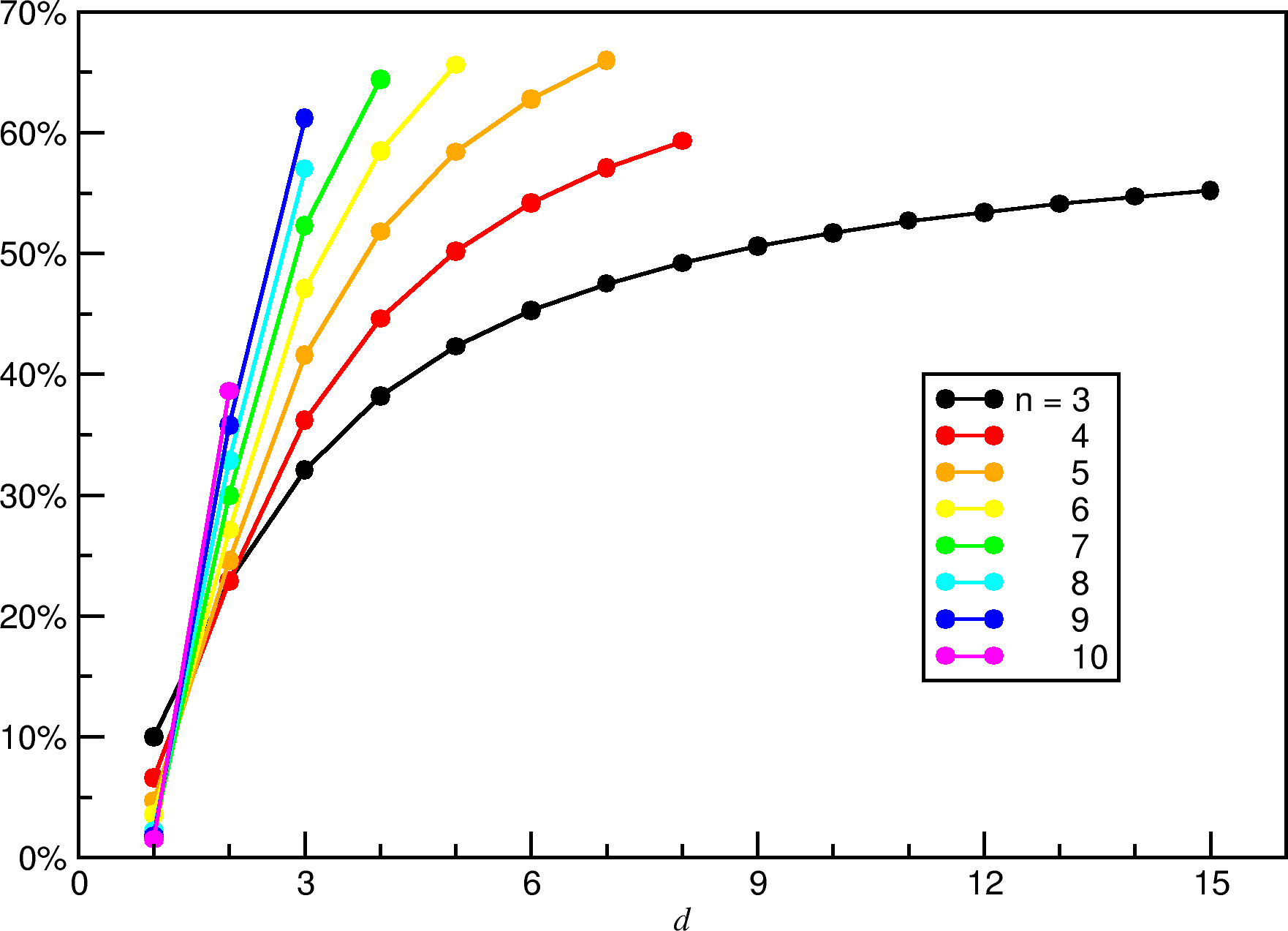}
\caption{Gaphic representation of $\fw_{n,2d}$ from table \ref{tab:Ds}.}\label{fig:lower}
\end{figure}

Some simple facts are collected in the following lemma.

\begin{lem}
\begin{enumerate}[i)]
\item $\fR_{n,2}=1$ and $\fz_{n,2}=1$.
\item $\fw_{n,2} > \fw_{n',2}$ for $1\leq n \leq n'$.
\item $\fR_{n,2d} \leq \fR_{n,2d'}$ and $\fR'_{n,2d} \leq \fR'_{n,2d'}$ for $1\leq d \leq d'$.
\item $\fR_{n,2d} \leq \fR_{n',2d}$ and $\fR'_{n,2d} \leq \fR'_{n',2d}$ for $1\leq n \leq n'$.
\end{enumerate}
\end{lem}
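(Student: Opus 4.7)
The plan is to prove each of the four items by a separate elementary argument; no machinery beyond the definitions of $\fp$, $\fq$, $\fR$, $\fR'$, $\fw$, $\fz$ and simple submatrix considerations is required.

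For (i), I would observe that with $d=1$ the set $\cZ(\fp)=\{0,\dots,d-1\}^n$ collapses to the single origin $\{(0,\dots,0)\}$. Thus the matrix $(s_{\sA_{n,2}}(x))_{x\in\cZ(\fp)}$ consists of one non-zero column, its constant-monomial entry being $1$, so $\fR_{n,2}=1$ and $\fz_{n,2}=1/1=1$. Part (ii) then follows immediately: substituting $|\sA_{n,2}|=\binom{n+2}{2}=(n+1)(n+2)/2$ into $\fw_{n,2}=\fR_{n,2}/|\sA_{n,2}|$ yields $\fw_{n,2}=2/((n+1)(n+2))$, which is strictly decreasing in $n$.

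For (iii) and (iv) my uniform strategy is to exhibit the matrix realizing $\fR_{n,2d}$ (respectively $\fR'_{n,2d}$) as a submatrix of the larger matrix, up to rows that are identically zero, and then invoke monotonicity of rank. In (iii), if $d\leq d'$, then simultaneously $\sA_{n,2d}\subseteq\sA_{n,2d'}$ and $\{0,\dots,d-1\}^n\subseteq\{0,\dots,d'-1\}^n$ (and analogously $\{0,\dots,d\}^n\subseteq\{0,\dots,d'\}^n$ in the $\fq$-case), so a direct submatrix selection of rows and columns immediately gives the desired rank inequalities for both $\fR$ and $\fR'$.

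For (iv), with $n\leq n'$, I would embed $\{0,\dots,d-1\}^n$ into $\{0,\dots,d-1\}^{n'}$ via the zero-padding map $(x_1,\dots,x_n)\mapsto(x_1,\dots,x_n,0,\dots,0)$, and then restrict the matrix defining $\fR_{n',2d}$ to the columns indexed by these padded points. A row indexed by a monomial of $\sA_{n',2d}$ that involves some variable $x_i$ with $i>n$ then vanishes identically on these columns, while a row indexed by a monomial purely in $x_1,\dots,x_n$ (i.e., an element of $\sA_{n,2d}$ viewed inside $\sA_{n',2d}$) reproduces exactly the corresponding row of the matrix defining $\fR_{n,2d}$. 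Deleting the zero rows leaves a submatrix of rank equal to $\fR_{n,2d}$, proving $\fR_{n,2d}\leq\fR_{n',2d}$; the same padding settles $\fR'_{n,2d}\leq\fR'_{n',2d}$. The only point to check carefully, which I expect to be the mildest of obstacles, is the bookkeeping in (iv): verifying that zero-padding annihilates precisely the rows of $\sA_{n',2d}\setminus\sA_{n,2d}$, which reduces to the trivial identity $0^\alpha=0$ for $\alpha\geq 1$.
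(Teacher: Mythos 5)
Your proof is correct. The paper itself offers no proof of this lemma (it is introduced as a collection of ``simple facts''), so there is nothing to compare against; your arguments --- the single-point zero set $\cZ(\fp)=\{0\}^n$ for $d=1$, the explicit formula $\fw_{n,2}=2/((n+1)(n+2))$, the submatrix rank monotonicity for (iii), and the zero-padding embedding for (iv) --- are exactly the elementary considerations the authors evidently had in mind, and each step checks out. One cosmetic remark: as your computation in (ii) makes plain, the strict inequality $\fw_{n,2}>\fw_{n',2}$ can only hold for $n<n'$, so the paper's hypothesis ``$1\leq n\leq n'$'' should be read as $n<n'$; this is a typo in the statement, not a gap in your argument.
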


In table \ref{tab:Ds} we observe  that in all calculated cases ($n=3,4,\dots,10$) we have $\fR'_{n,2} = \left(\begin{smallmatrix}n+2\\ 2\end{smallmatrix}\right) - n$. In the next lemma we prove that this holds for all $n\in\nset$.

\begin{lem}\label{lem:n2Rrank}
$\fR'_{n,2} = \left(\begin{smallmatrix}n+2\\ 2\end{smallmatrix}\right) - n = \frac{1}{2}(n^2 + n + 2)$ for $n\in\nset$.
\end{lem}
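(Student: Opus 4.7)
The setup simplifies drastically when $d=1$: the polynomial $\fq$ from (\ref{eq:q}) becomes $\fq(x) = \sum_{i=1}^n x_i(1-x_i)$ (the empty product in the definition of $q_i$ equals $1$), so its zero set on $[0,1]^n$ is precisely the vertex set $\cZ(\fq)=\{0,1\}^n$, of cardinality $2^n$. The quantity $\fR'_{n,2}$ is the rank of the matrix whose columns are the vectors $s_{\sA_{n,2}}(x)$ for $x\in\{0,1\}^n$. The rows of this matrix are indexed by the monomials in $\sA_{n,2}$, namely $1$, $x_i$ ($1\le i\le n$), $x_i^2$ ($1\le i\le n$), and $x_ix_j$ ($1\le i<j\le n$).

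The plan is to establish matching upper and lower bounds on this rank. For the \emph{upper bound}, I would observe that for every $x\in\{0,1\}^n$ one has $x_i^2=x_i$, so the row indexed by $x_i^2$ equals the row indexed by $x_i$ for each $i=1,\dots,n$. This yields $n$ independent linear relations among the rows, proving $\fR'_{n,2}\le |\sA_{n,2}|-n=\binom{n+2}{2}-n$.

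For the \emph{lower bound}, I would restrict to the $\binom{n+2}{2}-n$ squarefree monomials $\prod_{i\in S} x_i$ with $S\subseteq\{1,\dots,n\}$, $|S|\le 2$, and show that the corresponding rows are linearly independent. The standard fact I rely on is that all $2^n$ squarefree monomials $\{\prod_{i\in S}x_i\}_{S\subseteq\{1,\dots,n\}}$ form a basis of the space of real-valued functions on $\{0,1\}^n$; one quick way to see this is that $\bigl(\prod_{i\in S}x_i\bigr)(x)=\one_{S\subseteq\supp x}$, so the evaluation matrix indexed by $(S, x)\in 2^{[n]}\times\{0,1\}^n$ is (after ordering by inclusion) upper triangular with ones on the diagonal. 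In particular, any subfamily of these functions, including the one indexed by $|S|\le 2$, is linearly independent.

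Combining the bounds gives $\fR'_{n,2}=\binom{n+2}{2}-n=\tfrac12(n^2+n+2)$. There is no real obstacle here — the argument is a one-line observation ($x_i^2=x_i$ on $\{0,1\}$) coupled with the well-known linear independence of squarefree monomials on the Boolean cube; the only thing to check carefully is the arithmetic identity $1+n+\binom{n}{2}=\binom{n+2}{2}-n$.
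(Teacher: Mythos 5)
Your proof is correct. The upper bound is the same observation as the paper's, just phrased in terms of rows rather than polynomials: the identity $x_i^2=x_i$ on $\{0,1\}^n$ is exactly the statement that the $n$ linearly independent quadratics $q_i(x)=x_i(1-x_i)$ lie in $\Gamma_{s'}$, which is how the paper gets $\gamma_{s'}\geq n$. For the lower bound the two arguments are dual presentations of the same M\"obius-inversion computation on the Boolean lattice. The paper works with columns: it selects the $\binom{n+2}{2}-n$ points $\{0,\,e_i,\,e_i+e_j\}$ and exhibits explicit alternating combinations $s_\sA(e_i+e_j)-s_\sA(e_i)-s_\sA(e_j)+s_\sA(0)$, etc., that reduce the matrix to (a permutation of) standard basis vectors. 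You work with rows: you select the $\binom{n+2}{2}-n$ squarefree monomials of degree at most $2$ and invoke the triangularity of the zeta matrix $\bigl(\one_{S\subseteq\supp x}\bigr)$ to conclude that all squarefree monomials, hence any subfamily, are linearly independent as functions on the cube. Your version has the small advantage of quoting a clean general fact (squarefree monomials form a basis of functions on $\{0,1\}^n$) rather than doing the elimination by hand, and it makes transparent why exactly the $n$ monomials $x_i^2$ are redundant; the paper's version is more self-contained and produces an explicit $\bigl(\binom{n+2}{2}-n\bigr)$-point witness set inside $\cZ(\fq)$, which is in the spirit of its use of $\fR'_{n,2}$ as a face dimension. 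Both yield the identical bound, and your arithmetic check $1+n+\binom{n}{2}=\binom{n+2}{2}-n$ is right.
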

\begin{proof}
Since $\fq = q_1 + \dots + q_n$, we have $\gamma_{s'} \geq n$ and therefore $\cD_{s'} = \fR'_{n,2} \leq \left(\begin{smallmatrix}n+2\\ 2\end{smallmatrix}\right) - n$. For the converse direction, let $e_i$ be the $i$-th unit vector in $\rset^n$. Take the $\left(\begin{smallmatrix}n+2\\ 2\end{smallmatrix}\right) - n$ points $x_i$ as
\[P=\{0,e_1,\dots,e_n,e_1+e_2,\dots,e_1+e_3,\dots,e_{n-1}+e_n\}\subseteq \{0,1\}^n.\]
Then $\fR'_{n,2} \geq \rank (s_\sA(x))_{x\in P} = \rank M$ with
\[M = \begin{pmatrix}
s_\sA(0)^T\\
s_\sA(e_1)^T - s_\sA(0)^T\\
\vdots\\
s_\sA(e_n)^T - s_\sA(0)^T\\
s_\sA(e_1+e_2)^T-s_\sA(e_1)^T-s_\sA(e_2)^T+s_\sA(0)^T\\
\vdots\\
s_\sA(e_{n-1}+e_n)^T-s_\sA(e_{n-1})^T-s_\sA(e_n)^T+s_\sA(0)^T
\end{pmatrix}.\]
But 
\[s_\sA(0) = \begin{cases}1 & \text{at position}\ 1\\ 0 & \text{elsewhere}\end{cases},\qquad s_\sA(e_i) - s_\sA(0) = \begin{cases}1 & \text{at position}\ x_i\ \text{and}\ x_i^2\\ 0 & \text{elsewere}\end{cases},\]
and
\[s_\sA(e_i+e_j)^T-s_\sA(e_i)^T-s_\sA(e_j)^T+s_\sA(0)^T= \begin{cases}1 & \text{at position}\ x_i x_j\\ 0 & \text{else}\end{cases}.\]
So the rows in $M$ are linearly independent and $\rank M = \left(\begin{smallmatrix}n+2\\ 2\end{smallmatrix}\right) - n$.
\end{proof}

For all pairs $(n,d)$ of numbers occurring in table \ref{tab:Ds} and fig.\ \ref{fig:lower} we gather the inequalities:
\begin{subequations}\label{eq:trends}
\begin{align}
\fw_{n,2d} &< \fw_{n,2d'}&& \text{for all}\ 1\leq d < d',\\
\fw_{n,2d} &< \fw_{n',2d}&& \text{for all}\ 3\leq n < n',\\
\fz_{n,2d} &> \fz_{n,2d'}&& \text{for all}\ 1\leq d < d',\\
\fz_{n,2d} &> \fz_{n',2d}&& \text{for all}\ 3\leq n < n',\\
\fw'_{3,2d} &> \fw'_{3,2d'}&& \text{for all}\ 1\leq d < d'\\
\fw'_{n,2d} &< \fw'_{n,2d'}&& \text{for all}\ 2\leq d < d'\ \text{and}\ n\geq 4,\\
\fw'_{n,2d} &< \fw'_{n',2d}&& \text{for all}\ 3\leq n < n',\\
\fz'_{n,2d} &> \fz'_{n,2d'}&& \text{for all}\ 1\leq d < d',\\
\fz'_{n,2d} &> \fz'_{n',2d}&& \text{for all}\ 3\leq n < n'.
\end{align}
\end{subequations}
We don't know whether or not these inequalities hold in general (see Problem \ref{open:exposedDims} below). Do the limits
\begin{align}
\lim_{n\rightarrow\infty} \fw_{n,2d},\quad \lim_{d\rightarrow\infty} \fw_{n,2d},&\quad
\lim_{n\rightarrow\infty} \fw'_{n,2d},\quad \lim_{d\rightarrow\infty} \fw'_{n,2d},\label{eq:limitsW}\\
\lim_{n\rightarrow\infty} \fz_{n,2d},\quad \lim_{d\rightarrow\infty} \fz_{n,2d},&\quad
\lim_{n\rightarrow\infty} \fz'_{n,2d},\quad \lim_{d\rightarrow\infty} \fz'_{n,2d},\label{eq:limitsZ}
\end{align}
exist and if so, what are these limits (see Problem \ref{open:limits})? From Lemma \ref{lem:n2Rrank} we already obtain
\[\lim_{n\rightarrow\infty} \fw_{n,2} = 1.\]

In the next section we will see how the facial structure and especially table \ref{tab:Ds} affect the Carath\'eodory number $\cat_\sA$.

\section{Carath\'eodory numbers}
\label{sec:cara}

By the Richter Theorem (Theorem \ref{thm:richter}) every $s\in\cS_\sA$ has a $k$-atomic representing measure with $k\leq m$. This justifies the following definitions.

\begin{dfn}
The \emph{Carath\'eodory number} $\cat_\sA(s)$ of a moment sequence $s\in \cS_\sA$ is the smallest $k\in \nset_0$ such that $s$ has a $k$-atomic representing measure:
\[\cat_\sA(s) := \min \{k\in\nset \,|\, s\in\range S_{k,\sA}\} \quad\leq m.\]
The  \emph{Carath\'eodory number} $\cat_\sA$ of the moment cone $\cS_\sA$ is the maximum of numbers $\cat_\sA(s)$ for $s\in \cS_\sA$, or equivalently the smallest number such that every $s\in\cS_\sA$ has an at most $k$-atomic representing measure:
\[\cat_\sA:=\max_{s\in\cS_\sA}\cat_\sA(s)=\min\{k\in\nset\,|\,\cS_\sA\subseteq\range S_{k,\sA}\} \quad\leq m.\]
\end{dfn}

For the univariate polynomial moment problem the following classical result is already contained in \cite{richte57}.

\begin{thm}[{\cite[Satz 11]{richte57}}]
For $\cX = \rset$, $[a,b]$, $[a,b)$, $(a,b]$, or $(a,b)$ and $\sA = \{1,x,\dots,x^d\}$ with $d\in\nset$ and $-\infty\leq a < b\leq -\infty$ we have
\begin{equation}\label{eq:m+12half}
\cat_\sA = \cN_\sA = \left\lceil\frac{d+1}{2}\right\rceil.
\end{equation}
\end{thm}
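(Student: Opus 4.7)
Set $m=d+1$ and $N:=\lceil (d+1)/2\rceil$. The plan is to establish the chain $N\leq \cN_\sA\leq \cat_\sA\leq N$. The identity $\cN_\sA=N$ follows immediately from the Alexander--Hirschowitz Theorem \ref{thm:NApolynomial} applied with $n=1$: all exceptional cases listed there require $n\geq 2$, so the general formula (\ref{eq:NAforAllHomPoly}) specializes to $\cN_{\sA_{1,d}}=\lceil (d+1)/2\rceil = N$. The inequality $\cN_\sA\leq \cat_\sA$ is general: if every $s\in\cS_\sA$ admits a $k$-atomic representation, then the moment map $S_{k,\sA}$ of Definition \ref{dfn:momentCurveMap} surjects onto the $m$-dimensional cone $\cS_\sA$, which by the constant-rank theorem forces $\rank DS_{k,\sA}$ to attain its maximal value $m$ at some point; hence $\cN_\sA\leq k=\cat_\sA$. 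It remains to prove the upper bound $\cat_\sA\leq N$.

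For this I would invoke the classical Gauss--Jacobi quadrature. Given $s\in\inter\cS_\sA$, first use Richter's Theorem \ref{thm:richter} to obtain a finitely atomic representing measure $\mu$ of $s$, which therefore has all moments finite, and set $\tilde s_i:=\int x^i\,\diff\mu(x)$ for $i=0,\dots,2N-1$; since $2N-1\geq d$, the sequence $\tilde s$ extends $s$. Interiority of $s$ implies that the truncated Hankel matrix $H:=(\tilde s_{i+j})_{0\leq i,j\leq N-1}$ is symmetric positive definite, so there exists a unique monic polynomial $p_N$ of degree $N$ that is $L_{\tilde s}$-orthogonal to $1,x,\dots,x^{N-1}$. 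Its $N$ roots $x_1,\dots,x_N$ are real, distinct, and lie strictly inside $\cX$ by the Chebyshev--Markov placement theorem for zeros of orthogonal polynomials. The orthogonality of $p_N$ together with the positivity of the Christoffel weights $c_i>0$ determined by the Vandermonde system then gives $\sum_{i=1}^N c_i x_i^j=\tilde s_j$ for all $j=0,\dots,2N-1$. Since $d\leq 2N-1$, this yields the desired $N$-atomic representation of $s$.

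Boundary points $s\in\partial^*\cS_\sA$ are handled by descent via Lemma \ref{lem:boundaryreduction}: such $s$ lies in an exposed face which is itself a moment cone on a proper subset $\cZ\subsetneq\cX$ with a linearly independent basis of strictly fewer functions, so induction on $\dim\cS_\sA$ closes the argument, since the bound $\lceil\cdot/2\rceil$ is monotone in dimension. The main obstacle I anticipate is ensuring that the Gauss nodes actually lie in $\cX$ for each of the five listed interval types: the classical placement theorem puts the nodes strictly between the extreme points of $\supp\mu\subseteq\cX$, and by Richter's Theorem \ref{thm:richter} one may choose $\mu$ with support avoiding any prescribed finite set (in particular the endpoints that fall outside a half-open or open $\cX$), so the nodes stay inside $\cX$ uniformly across all five cases.
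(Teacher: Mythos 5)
The paper offers no proof of this statement---it is quoted directly as Richter's classical Satz~11---so your argument has to stand on its own. The overall skeleton $N\leq\cN_\sA\leq\cat_\sA\leq N$ is reasonable, and the interior case via Gauss quadrature is essentially correct: positive definiteness of the Hankel matrix for $s\in\inter\cS_\sA$, existence of the degree-$N$ orthogonal polynomial, real simple roots, positive Christoffel weights, exactness up to degree $2N-1\geq d$. Two small repairs there: for $\cN_\sA\leq\cat_\sA$ you should argue via Sard's theorem (as in Theorem \ref{thm:NAlowerCat}), not the ``constant-rank theorem''---the rank of $DS_{k,\sA}$ is not constant, and the correct point is that if it were everywhere $<m$ then every value would be critical and the image would be a Lebesgue null set, hence could not contain the full-dimensional cone $\cS_\sA$. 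Also, when the auxiliary measure $\mu$ has exactly $N$ atoms the Gauss nodes coincide with those atoms rather than lying strictly inside $\conv\supp\mu$; this is harmless, since in all cases the nodes lie in $[\min\supp\mu,\max\supp\mu]\subseteq\cX$, which is all you need.

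The boundary case, however, contains a genuine gap. Lemma \ref{lem:boundaryreduction} identifies the exposed face through $s$ with a moment cone on the finite set $\cZ(p)$, where $p=\langle v,s_\sA(\cdot)\rangle\in\pos(\sA)\setminus\{0\}$, but it only bounds the dimension of that face by $m-1=d$; and a moment cone on a finite (or countable) set has Carath\'eodory number equal to its \emph{full} dimension by Theorem \ref{thm:CaraCountable}, not half of it. So ``induction on $\dim\cS_\sA$, since $\lceil\cdot/2\rceil$ is monotone in dimension'' does not apply: the face is not a system of the form $\{1,x,\dots,x^{d'}\}$ on an interval, and your descent as written yields only $\cat_\sA(s)\leq d$, which is weaker than $N$ once $d\geq 2$. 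The missing ingredient is the elementary zero count for nonnegative polynomials on an interval: if $p\in\pos(\sA)$, $p\neq 0$, has $r$ distinct zeros in $\cX$, of which $e\leq 2$ lie at endpoints of $\cX$ belonging to $\cX$, then interior zeros have even multiplicity and endpoint zeros have multiplicity at least one, so $2(r-e)+e\leq d$ and hence $r\leq\lfloor (d+2)/2\rfloor=\lceil (d+1)/2\rceil=N$ in each of the five cases. Since $L_s(p)=0$ and $p\geq 0$ force every representing measure of $s$ to be concentrated on $\cZ(p)\cap\cX$, this gives $\cat_\sA(s)\leq|\cZ(p)\cap\cX|\leq N$ directly and closes the boundary case.
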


Here, as usual, $\lceil r\rceil$ denotes the smallest integer which is larger or equal to $r$.

For monomials on $\rset$ with gaps, that is, $\sA=\{ 1,x^{d_2},\dots,x^{d_m}\}$ with $0<d_2<\cdots<d_m$, formula $\cat_\sA = \cN_\sA$ is no longer valid in general. Sufficient conditions for $\cat_\sA = \cN_\sA$ to hold are given in  \cite[Theorem 45]{didio17Cara}. We restate this result without proof. Let
\begin{equation}
\label{eq:oneDimCondA}
\sA = \{1,x^{d_1},\dots,x^{d_m}\}\ \text{with}\ 1\leq d_1 < \dots < d_m = 2d\ \text{on}\ \cX = \rset.
\end{equation}
In \cite[Lem.\ 40]{didio17Cara} it was shown  that
\begin{equation*}
\begin{split}
& \det DS_{k,{\sA}}(c_1,\dots,c_k,x_1,\dots,x_k)=\\
& c_1\cdots c_k \cdot (x_1\cdots x_k)^{2d_1} \prod_{1\leq i < j \leq k} (x_j - x_i)^4 \cdot q_{\sA}(x_1,\dots,x_k),
\end{split}
\end{equation*}
and
\begin{equation*}
\begin{split}
& \det (DS_{k-1,{\sA}},s_{\sA}(x_k))=\\
& c_1\cdots c_{k-1}\cdot (x_1\cdots x_{k-1})^{2d_1} x_k^{d_1} \cdot \prod_{1\leq i < j \leq k-1} (x_j - x_i)^4 \cdot \prod_{i=1}^{k-1} (x_k - x_i)^2 \cdot q_{{\sA},k}(x_1,\dots,x_k),
\end{split}
\end{equation*}
where $q_\sA$ and $q_{\sA,i}$ are polynomials with non-negative coefficients.

\begin{thm}[{\cite[Thm.\ 45]{didio17Cara}}]\label{thm:onedimCara}
Let  $\sA$ be as in (\ref{eq:oneDimCondA}) and $\cZ := \cZ(q_{{\sA}})\subseteq\rset^k$ if $m=2k$ is even and $\cZ := \cZ(q_{\sA,1})\cap \dots\cap \cZ(q_{{\sA},k})\subseteq \rset^k$ if $m=2k-1$ is odd. Suppose
\begin{equation}\label{condition:one-dim-Caratheodory}
(x_1,\dots,x_k)\in\cZ\folgt\exists i\neq j: x_i = x_j.
\end{equation}
Then
\begin{equation}\label{eq:oneDimCat}
\cat_\sA = \left\lceil \frac{m}{2}\right\rceil.
\end{equation}
\end{thm}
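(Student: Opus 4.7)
The plan is to establish the two bounds $\cat_\sA\geq\lceil m/2\rceil$ and $\cat_\sA\leq\lceil m/2\rceil$ separately; hypothesis \eqref{condition:one-dim-Caratheodory} enters only in the upper bound.

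For the lower bound I would first note the general inequality $\cN_\sA\leq\cat_\sA$: any $s\in\inter\cS_\sA$ admits a $\cat_\sA$-atomic representation, and local surjectivity of $S_{\cat_\sA,\sA}$ near $s$ forces $DS_{\cat_\sA,\sA}$ to have full rank $m$ at some point. Specializing the universal bound $\lceil m/(n+1)\rceil\leq\cN_\sA$ from \cite[Prop.~23]{didio17Cara} to $n=1$ then gives $\lceil m/2\rceil\leq\cN_\sA\leq\cat_\sA$.

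For the upper bound, set $k:=\lceil m/2\rceil$, fix $s\in\cS_\sA$, and by Richter's Theorem~\ref{thm:richter} write $s=\sum_{i=1}^{N} c_i\, s_\sA(x_i)$ with $c_i>0$, pairwise distinct $x_i$, and $N\leq m$; take $N$ minimal and suppose for contradiction $N>k$. The factorization of $\det DS_{k,\sA}$ recalled before the theorem (respectively of the $m\times m$ minors $\det(DS_{k-1,\sA},s_\sA(x_k))$ in the odd case), combined with \eqref{condition:one-dim-Caratheodory}, shows that $DS_{k,\sA}$ has rank $m$ at every point with positive coefficients and pairwise distinct non-zero abscissas. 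Since $N>k$ and at most one atom equals $0$, one may re-label so that $x_1,\dots,x_k$ are distinct and non-zero; the corresponding $m\times 2k$ submatrix of $DS_{N,\sA}(C,X)$ then has rank $m$, so $DS_{N,\sA}(C,X)$ itself has rank $m$, and since $2N>m$, its kernel is non-trivial.

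Picking $(u,v)\neq 0$ in $\ker DS_{N,\sA}(C,X)$, the implicit function theorem yields a smooth curve $t\mapsto(C(t),X(t))$ through $(C,X)$ lying in the admissible open region $\{c_i>0,\ x_i \text{ pairwise distinct}\}$ along which $S_{N,\sA}\equiv s$; extend it maximally. The curve must leave this open region in one of three ways: (i) some $c_i(t)\to 0^+$, (ii) some $x_i(t)-x_j(t)\to 0$, or (iii) some $|x_i(t)|\to\infty$. Cases (i) and (ii) each yield a representation of $s$ with at most $N-1$ atoms (vanishing atoms are dropped, colliding atoms are merged into one with combined mass), contradicting minimality of $N$. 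Case (iii) reduces to (i) via the conservation laws $\sum_i c_i(t)$ and $\sum_i c_i(t)\, x_i(t)^{2d}$ (constant along the curve because $1, x^{2d}\in\lin\sA$): positivity of each summand forces $c_i(t)\to 0$ whenever $|x_i(t)|\to\infty$.

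The main technical obstacle will be the deformation step: one must ensure that the curve extends up to a genuine boundary point without being trapped at a lower-dimensional locus where $DS_{N,\sA}$ drops rank (handled by a generic-position argument on the initial representation), deal with the possible exceptional atom $x=0$ (which makes the factor $x_i^{2d_1}$ vanish in the determinant) by keeping it outside the chosen $k$-tuple, and justify via compactness that the three boundary scenarios yield \emph{bona fide} $(N-1)$-atomic representations rather than merely asymptotic ones. The two conservation laws coming from $1, x^{2d}\in\lin\sA$ furnish the a priori bounds that close this compactness argument.
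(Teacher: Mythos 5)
First, a point of reference: the paper itself states this theorem \emph{without} proof (it is quoted verbatim from \cite[Thm.\ 45]{didio17Cara}), so your proposal can only be measured against the cited source. Your architecture is the right one and matches that source: the lower bound $\lceil m/2\rceil\leq\cN_\sA\leq\cat_\sA$ via Sard/Theorem \ref{thm:NAlowerCat} with $n=1$ is correct as stated, and the upper bound is indeed obtained by combining the determinant factorizations with a deformation inside the fiber $S_{N,\sA}^{-1}(s)$. One remark in your favour that you seem not to have noticed: the ``main technical obstacle'' you announce (being trapped where $DS_{N,\sA}$ drops rank) is vacuous here. In the open admissible region the $N$ atoms are pairwise distinct, so at least $N-1\geq k$ of them are nonzero and distinct, and hypothesis (\ref{condition:one-dim-Caratheodory}) then forces $\rank DS_{N,\sA}=m$ at \emph{every} point of that region; no generic-position argument is needed.

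The genuine gaps are elsewhere, in the two places where you wave at ``compactness''. (1) Case (iii) does not reduce to case (i). The conservation law $\sum_i c_i x_i^{2d}=s_m$ (all summands nonnegative) gives $c_i(t)\leq s_m\, x_i(t)^{-2d}\to 0$, but it does not give $c_i(t)x_i(t)^{2d}\to 0$: along a subsequence this product may converge to some $\Lambda>0$, in which case the surviving atoms represent $s-(0,\dots,0,\Lambda)$ rather than $s$, and the defect $(0,\dots,0,\Lambda)$ is not a moment sequence you can reattach (its zeroth moment vanishes while its top moment does not). So mass escaping to infinity in the top coordinate must be excluded by a separate argument --- e.g.\ homogenizing and working on $\pset^1$, where the point at infinity has moment vector $(0,\dots,0,1)$, and then ruling out an atom there --- and your two conservation laws alone do not do this. (2) ``Extend it maximally; the curve must leave the open region'' is not automatic: a maximal curve in the fiber need not approach the boundary of the admissible set at all (the relevant component of the fiber could be compact, or the flow recurrent), in which case none of (i)--(iii) occurs. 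The standard repair is to choose the kernel direction so that some single coefficient $c_{i_0}(t)$ is strictly monotone along the flow, which both guarantees progress toward the boundary and pins down which atom is to be discarded; one must then also verify that such a direction exists, i.e.\ that $\ker DS_{N,\sA}(C,X)$ is not contained in $\{u_{i_0}=0\}$ for every $i_0$. Until these two points are supplied, the upper bound $\cat_\sA\leq\lceil m/2\rceil$ is not established.
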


An example where (\ref{eq:oneDimCat}) fails is given in \cite[Example 48]{didio17Cara}. 
For one-dimensional monomial systems with gaps, A.\ Wald \cite{wald39}  proved already in 1939 the following.

\begin{thm}[{\cite[Prop.\ 13]{wald39}}]
Let $0\leq d_1 < \dots < d_m$ be integers, $\cX = [0,\infty)$, and $\sA = \{x^{d_1},\dots,x^{d_m}\}$. Then any $s\in\cS_\sA$ has a $k$-atomic representing measure with $k \leq \frac{m+1}{2}$.
\end{thm}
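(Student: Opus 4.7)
The plan is to reduce the number of atoms iteratively, starting from the bound $k \leq m$ supplied by Richter's Theorem~\ref{thm:richter}. Suppose $\mu = \sum_{i=1}^k c_i \delta_{x_i}$ is a representing measure of $s\in\cS_\sA$ with pairwise distinct atoms $x_1,\dots,x_k\in[0,\infty)$ and weights $c_i > 0$. I would show that whenever $k > \frac{m+1}{2}$, i.e.\ $2k\geq m+2$, the measure $\mu$ can be replaced by a representing measure of $s$ with strictly fewer atoms; iterating this reduction then yields the claimed bound.

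For the reduction step I would use the moment map (cf.\ Definition~\ref{dfn:momentCurveMap})
\[
F\colon [0,\infty)^k \times (0,\infty)^k \longrightarrow \rset^m, \qquad (x, c) \mapsto \sum_{i=1}^k c_i\, s_\sA(x_i),
\]
whose fibre through $(x,c)$ is the local family of representing measures of $s$ supported on a neighbourhood of $\{x_1,\dots,x_k\}$. By the explicit formula \eqref{eq:totalderivative}, the differential $DF(x,c)$ is the $m\times 2k$ matrix whose columns are $c_i s_\sA'(x_i)$ and $s_\sA(x_i)$; since $2k\geq m+2$, the kernel of $DF(x,c)$ has dimension at least two. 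I would pick a nonzero tangent vector $(v,r)$ in this kernel pointing into the admissible region and flow $(x,c)$ along it inside $F^{-1}(s)$. The flow must terminate at the boundary of the parameter region, and the possible boundary events are: \emph{(a)} some weight $c_i$ hits $0$; \emph{(b)} two atoms $x_i,x_j$ collide and merge into a single atom of weight $c_i+c_j$; \emph{(c)} some atom $x_i$ reaches the left endpoint $0$; \emph{(d)} some atom $x_i$ escapes to $+\infty$. In cases (a) and (b) the number of distinct atoms strictly drops, so the reduction succeeds.

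The heart of the proof is to exclude (d) and to account for (c). For (d), I would exploit that the highest moment $s_m = \sum_i c_i x_i^{d_m}$ is preserved along the flow; hence $c_i x_i^{d_m}\leq s_m$ for every $i$, so any divergence $x_i\to+\infty$ would force $c_i\to 0$, triggering case (a) first. Case (c) by itself does not decrease the atom count, but once $x_i=0$ the $x$-coordinate of this atom is frozen on the boundary of the admissible region, so one of the $2k$ degrees of freedom is lost. Redoing the dimension count with this correction, a further reducing direction in the kernel still exists as long as $2k-1 > m$, i.e., $k > \frac{m+1}{2}$, which is exactly the claimed threshold. The main obstacles I expect are the rigorous bookkeeping of this ``frozen boundary atom'' phenomenon and the possibility that $DF$ drops rank along the flow (requiring a careful choice of the tangent direction, or a compactness argument on the set $\fM_\sA(s)\cap\{\mu : \mu\text{ is }k\text{-atomic}\}$ to pass to the limit); modulo these technicalities, iterating the reduction from a Richter representation with $k\leq m$ yields a representing measure with $k\leq\frac{m+1}{2}$ atoms.
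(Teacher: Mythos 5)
The paper itself gives no proof of this theorem (it is quoted from Wald \cite{wald39} without argument), so your proposal has to stand on its own. The overall strategy --- start from Richter's bound $k\le m$ and deform a $k$-atomic representation inside the fibre of the moment map until two atoms merge or a weight vanishes --- is indeed the classical route (it is the mechanism behind Theorem \ref{thm:onedimCara} and the Chebyshev-system ``index'' arguments). But two steps are genuinely gapped, and they are precisely where the hypotheses $\cX=[0,\infty)$ and $\sA=\{x^{d_1},\dots,x^{d_m}\}$ must enter. The first is the assertion that one can ``pick a nonzero tangent vector in the kernel and flow along it inside $F^{-1}(s)$'': over $\rset$ a large kernel of the differential does not produce a curve in the fibre (the fibre of $(x,y)\mapsto x^2+y^2$ over $0$ is a single point although the differential there vanishes). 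To get a local manifold of dimension $2k-m\ge 2$ inside $F^{-1}(s)$ you need $\rank DF(x,c)=m$, and proving this for distinct $x_i\in(0,\infty)$, $c_i>0$ and $2k\ge m$ is the heart of the theorem: a vector annihilating all columns of $DF$ corresponds to a nonzero $p\in\lin\sA$ with $p(x_i)=p'(x_i)=0$ for all $i$, hence with at least $2k\ge m$ positive zeros counted with multiplicity, which is impossible by Descartes' rule of signs for $\sum_j\lambda_jx^{d_j}$. Your proposal never invokes this Chebyshev property; as written, the identical dimension count would ``prove'' $\cat_\sA\le\lceil m/2\rceil$ for $\sA=\{1,x,x^2,x^6\}$ on $\rset$, where the conclusion is false ($\cat_\sA=3$, Example \ref{exm:BoundarySingular}) exactly because $DS_{2,\sA}$ degenerates along $\{y=-x\}$ --- a degeneration that the restriction to $[0,\infty)$ forbids.

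The second gap is the termination analysis, in particular case (d). From $c_ix_i^{d_m}\le s_m$ you only conclude that $c_i\to 0$ asymptotically as $x_i\to\infty$; this is not the finite-time event (a), and the escaping atom may carry a positive amount $\lambda=\lim_i c_ix_i^{d_m}>0$ of the highest moment ``to infinity'', so that the surviving atoms converge to a representing measure of the sequence obtained from $s$ by subtracting $\lambda$ from its last entry, not of $s$ (this is the atom at infinity seen in Example \ref{exm:semialgSA}). Ruling out this escape, and the possibility that the flow stays in the open parameter region forever without reaching any boundary event, needs a separate argument --- for instance, minimizing one fixed weight over the closure of the fibre and using the Chebyshev structure again to show the minimizer is degenerate. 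Finally, the ``frozen atom at $0$'' count leading to the threshold $2k-1>m$ is the right heuristic (it is the classical rule that an endpoint atom has index $\frac{1}{2}$), but as presented it is bookkeeping layered on a flow whose existence and termination are themselves the unproved content. In short: right strategy, but the two places where the monomial-on-the-half-line hypothesis does the work are exactly the places left as ``technicalities''.
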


The next result deals with the half-line $\cX = (0,\infty)$.

\begin{thm}[{\cite[Thm.\ 40]{didio18gaussian}}]\label{thm:oneDimGapsOpen}
Suppose  $d_1,\dots,d_m\in\nset_0$, $m\in \nset,$\, $d_1 < \dots < d_m$, and $\sA = \{x^{d_1},\dots,x^{d_m}\}$ on $\cX = (0,\infty)$. Then $\cat_\sA = \left\lceil\frac{m}{2}\right\rceil$.
\end{thm}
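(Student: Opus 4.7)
The plan is to prove both inequalities separately, with the lower bound coming from a dimension count and the upper bound from splitting into boundary and interior sequences.

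For the lower bound $\cat_\sA \geq \lceil m/2\rceil$, I would use a parameter count. If $\cat_\sA = k$, then by definition $\cS_\sA \subseteq \range S_{k,\sA}$. The domain of $S_{k,\sA}$ is $\rset_{>0}^k \times (0,\infty)^k$, a manifold of dimension $2k$. Since the functions $x^{d_1},\dots,x^{d_m}$ are linearly independent on $(0,\infty)$ (a Müntz system), Lemma \ref{lem:linearindependentA} shows that $\cS_\sA$ has dimension $m$, forcing $2k\geq m$, i.e., $k\geq\lceil m/2\rceil$.

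For the upper bound on boundary sequences $s\in\partial^*\cS_\sA$, I would apply Lemma \ref{lem:boundaryreduction}: there exists a nonzero $p\in\pos(\sA)$ with $L_s(p)=0$, and every representing measure of $s$ is supported on $\cZ(p)\cap(0,\infty)$. Here the Descartes rule of signs for generalized polynomials enters: the Müntz polynomial $p(x)=\sum_{i=1}^m c_ix^{d_i}$ has at most $m-1$ positive zeros counted with multiplicity. Since $p\geq 0$ on $(0,\infty)$, each positive zero has even multiplicity, so $p$ has at most $\lfloor(m-1)/2\rfloor=\lceil m/2\rceil-1$ distinct zeros in $(0,\infty)$. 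Thus $s$ has a representing measure with at most $\lceil m/2\rceil-1$ atoms.

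For the interior case $s\in\inter\cS_\sA$, I would proceed by a ray-reduction argument. First note that $\cS_\sA$ is pointed because $x^{d_1}\in\pos(\sA)$ is strictly positive on $(0,\infty)$, so $\{0\}$ is an exposed face. Fix any $x_0\in(0,\infty)$ (which belongs to $\cW(s)$ since $s$ is interior) and consider the ray $s_\lambda:=s-\lambda\cdot s_\sA(x_0)$ for $\lambda\geq 0$. Pointedness forces the ray to leave $\cS_\sA$ for large $\lambda$; set $\lambda^*:=\sup\{\lambda\geq 0\mid s_\lambda\in\cS_\sA\}$. If one can show $s_{\lambda^*}\in\partial^*\cS_\sA$, then the boundary case supplies a representation of $s_{\lambda^*}$ with at most $\lceil m/2\rceil-1$ atoms, and adding the atom $x_0$ with weight $\lambda^*$ yields the desired $\lceil m/2\rceil$-atomic representation of $s$.

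The main obstacle is the last step: because $\cX=(0,\infty)$ is open, $\cS_\sA$ need not be closed (atoms can escape to $0$ or $\infty$), so $s_{\lambda^*}$ might only lie in $\overline{\cS_\sA}\setminus\cS_\sA$. My plan to handle this is to exhaust $(0,\infty)$ by the compact intervals $[N^{-1},N]$: on each such interval the corresponding moment cone $\cS_\sA^{[N^{-1},N]}$ is closed, the ray argument applies cleanly, and one can then pass to the limit as $N\to\infty$. Alternatively, one can choose the direction $s_\sA(x_0)$ more cleverly, or perturb the atom $x_0$ together with $\lambda$, so that at the critical value $\lambda^*$ the resulting boundary point still admits a representing measure on $(0,\infty)$ rather than escaping to the closure; verifying this compactness-type statement is the technical heart of the proof.
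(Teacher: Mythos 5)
First, a remark on the comparison itself: the paper states this theorem only by citation to \cite[Thm.\ 40]{didio18gaussian} and contains no proof of it, so there is no in-paper argument to match yours against; I assess your proposal on its own merits. Two of your three steps are sound. The lower bound is exactly the Sard-type estimate of Theorem \ref{thm:NAlowerCat} with $n=1$ (you should say explicitly that the image of the smooth map $S_{k,\sA}$ from a $2k$-dimensional domain has $m$-dimensional Lebesgue measure zero when $2k<m$, hence cannot cover the nonempty interior of $\cS_\sA$). The boundary case is also correct: a supporting hyperplane at $s\in\partial^*\cS_\sA$ gives $0\neq p\in\pos(\sA)$ with $L_s(p)=0$; Descartes' rule bounds the positive zeros of $p$ with multiplicity by $m-1$; nonnegativity on the \emph{open} half-line forces every zero to have even multiplicity, so $|\cZ(p)\cap(0,\infty)|\leq\lfloor (m-1)/2\rfloor=\lceil m/2\rceil-1$; and Richter's Theorem applied to this finite set finishes it.

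The interior case, however, contains a genuine gap, which you yourself flag, and neither of your proposed repairs closes it. The ray $s-\lambda s_\sA(x_0)$ may exit $\cS_\sA$ through a point of $\overline{\cS_\sA}\setminus\cS_\sA$ (mass escaping to $0$ or $\infty$; e.g.\ for $\sA=\{1,x,x^2\}$ the point $(0,0,1)=\lim_{t\to\infty}t^{-2}s_\sA(t)$ lies in $\overline{\cS_\sA}\setminus\cS_\sA$), and then the boundary case does not apply. The exhaustion by $[N^{-1},N]$ does not fix this: the supporting polynomial of $\cS_\sA^{[N^{-1},N]}$ at the exit point need only be nonnegative on $[N^{-1},N]$, so it may have \emph{simple} zeros at the two endpoints, which are perfectly admissible atoms of $(0,\infty)$. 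The zero count then allows $2+\lfloor(m-3)/2\rfloor=\lceil m/2\rceil$ distinct zeros, so the reduced sequence may need $\lceil m/2\rceil$ atoms and reattaching $x_0$ overshoots to $\lceil m/2\rceil+1$ --- off by one in every case, already for $m=2$. Passing to the limit $N\to\infty$ is equally problematic, since the $N$-dependent atoms and masses need not converge (this is precisely the non-closedness you are trying to circumvent). The step that actually closes the argument in the cited reference is not a convexity reduction but the determinantal criterion displayed before Theorem \ref{thm:onedimCara}: for $\sA=\{x^{d_1},\dots,x^{d_m}\}$ the factor $q_\sA$ (resp.\ the $q_{\sA,i}$) has nonnegative coefficients and is strictly positive on $(0,\infty)^k$, so every $\lceil m/2\rceil$-atomic measure with distinct positive atoms is regular, and an open-mapping/connectedness argument then shows $\inter\cS_\sA\subseteq\range S_{\lceil m/2\rceil,\sA}$. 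You would need to supply this (or an equivalent principal-representation argument from T-system theory) to complete the interior case.
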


For other cases than one-dimensional truncated power moment problems only a few Cara\-th\'eo\-dory numbers are  known, but upper and lower bounds can be given.

First we note that in the general case the bound $\cat_\sA\leq m$ in Theorem \ref{thm:richter} is sharp as we see from the next result. Recall that $m = \dim\cS_\sA$.

\begin{thm}[{\cite[Thm.\ 3.31]{didioDiss}}]\label{thm:CaraCountable}
If $s_\sA(\cX)$ is countable, then
\[\cat_\sA = m.\]
\end{thm}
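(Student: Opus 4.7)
The upper bound $\cat_\sA \leq m$ is given by Richter's Theorem \ref{thm:richter}. Hence the plan is to exhibit a moment sequence $s \in \cS_\sA$ that cannot be represented by fewer than $m$ atoms, i.e.\ $\cat_\sA(s) \geq m$.

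First, by the standing assumption $(*)$ the set $\sA$ is linearly independent, so by Lemma \ref{lem:linearindependentA} the moment cone $\cS_\sA$ has dimension $m$ in $\rset^m$, hence $\inter \cS_\sA$ is a non-empty open subset of $\rset^m$ and thus has strictly positive $m$-dimensional Lebesgue measure. This non-emptiness of the interior is the hook that will let us run a dimension/measure count against the countability hypothesis.

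Next, I would analyze $\range S_{k,\sA}$ for $k < m$. By the countability assumption, write $s_\sA(\cX) = \{v_j\}_{j \in J}$ for a countable (possibly finite) index set $J$. Every element of $\range S_{k,\sA}$ has the form $\sum_{i=1}^k c_i\, s_\sA(x_i)$ with $c_i \geq 0$ and $x_i \in \cX$, so it lies in the convex cone generated by some $k$-element subset $\{v_{j_1},\dots,v_{j_k}\}$ of $s_\sA(\cX)$. There are only countably many such $k$-tuples, and each generates a convex cone contained in a linear subspace of dimension at most $k < m$, which has Lebesgue measure zero in $\rset^m$. Consequently
\[
\bigcup_{k=1}^{m-1} \range S_{k,\sA}
\]
is a countable union of sets of $m$-dimensional Lebesgue measure zero, hence itself has measure zero.

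Finally, since $\inter \cS_\sA$ has positive measure while the above union has measure zero, there exists
\[
s \in \inter \cS_\sA \setminus \bigcup_{k=1}^{m-1} \range S_{k,\sA}.
\]
For this $s$ no $k$-atomic representing measure with $k < m$ exists, so $\cat_\sA(s) \geq m$; combined with Richter's bound we obtain $\cat_\sA(s) = m$ and therefore $\cat_\sA = m$. The argument is essentially transparent once the correct framing is in place; the only real subtlety is to make sure that ``the set of sequences expressible with at most $k$ atoms'' is really a countable union of low-dimensional cones, which is precisely where the hypothesis that $s_\sA(\cX)$ is countable (not merely that $\cX$ is arbitrary) is used. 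A Baire category argument in $\inter \cS_\sA$ works just as well in place of the measure-theoretic one, since each of the low-dimensional cones is nowhere dense in $\rset^m$.
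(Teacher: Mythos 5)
Your proof is correct and follows essentially the same route as the paper: both arguments observe that the sequences representable by fewer than $m$ atoms lie in a countable union (thanks to the countability of $s_\sA(\cX)$) of sets contained in proper linear subspaces of $\rset^m$, which cannot cover the non-empty interior of $\cS_\sA$. The paper phrases the final step via a Baire-type ``no inner points'' argument for a countable union of closed subspaces, while you use Lebesgue measure zero (and correctly note the Baire alternative), so the two are interchangeable.
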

\begin{proof}
Since $\sA$ is linearly independent, $\cS_\sA$ is full-dimensional and $\inter\cS_\sA \neq\emptyset$. Set $h_{x_1,\dots,x_{m-1}} := \lin \{s_\sA(x_1),\dots,s_\sA(x_{m-1})\}$ with $x_i\in\cX$. Then $h_{x_1,\dots,x_{m-1}}$ is a closed subspace of $\rset^m$ of dimension at most $m-1$. Since $s_\sA(\cX)$ is countable, so is $s_\sA(\cX)^{m-1}$ and $H := \bigcup_{x_1,\dots,x_{m-1}\in\cX} h_{x_1,\dots,x_{m-1}}$ is a countable union of closed subspaces with dimension at most $m-1$. Hence $H$ does not contain inner points. Therefore, $\inter\cS_\sA\setminus H \neq \emptyset$. Any sequences $s\in\inter\cS_\sA\setminus H$ need at least $m$ atoms, since otherwise it would be contained in some hyperplane $h_{x_1,\dots,x_{m-1}}$.
\end{proof}

The truncated moment problem on $\cX = \nset_0$ was studied in \cite{infusino17} and the previous theorem completely solves the Carath\'eodory number problem in this case:  Since $\nset_0$ is countable, so is $s_\sA(\nset_0)$   and therefore $\cat_\sA = m$ by Theorem \ref{thm:CaraCountable}.

A slightly better upper bound than $m$ is given in the following result, see \cite[Thm.\ 12]{didio18gaussian}. It is a version of \cite[Thm.\ 13]{didio17Cara} with weaker conditions, but its proof is verbatim the same.

\begin{thm}\label{thm:m-1Bound}
If the set $\frac{s_\sA}{\|s_\sA\|}(\{x\in\cX \,|\, {s_\sA(y)\neq 0}\})$ has at most $m-1$ path-connected components, then
\[\cat_\sA \leq m-1.\]
\end{thm}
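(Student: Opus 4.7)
The plan is to run a Fenchel--Bunt type continuity argument on the conic hull of the moment curve. By Richter's Theorem (Theorem \ref{thm:richter}), every $s \in \cS_\sA$ admits an $m$-atomic representation $s = \sum_{i=1}^m c_i\, s_\sA(x_i)$ with $c_i \geq 0$. If $s=0$, some $c_i=0$, or some $s_\sA(x_i)=0$, the claim is immediate; otherwise normalize by setting $u_i := s_\sA(x_i)/\|s_\sA(x_i)\|$, which lies in the normalized image $N := \{s_\sA(x)/\|s_\sA(x)\| : s_\sA(x) \neq 0\}$ appearing in the hypothesis, and $\tilde c_i := c_i\,\|s_\sA(x_i)\| > 0$, so that $s = \sum_i \tilde c_i u_i$. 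A linear dependence $\sum_i \lambda_i u_i = 0$ among the $u_i$ lets one substitute $\tilde c_i(t) := \tilde c_i - t\lambda_i$ in the representation; choosing the sign of $t$ so that the coefficients stay non-negative and increasing $|t|$ until the first coefficient hits $0$ reduces to $m-1$ atoms immediately.

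Assume henceforth $u_1, \dots, u_m$ are linearly independent. Since $N$ has at most $m-1$ path-connected components by hypothesis, the pigeonhole principle places two of the $u_i$, say $u_1$ and $u_2$, in the same component, and we fix a continuous path $\sigma : [0,1] \to N$ with $\sigma(0) = u_1$ and $\sigma(1) = u_2$. Consider the parametrized linear system
\[
M(t)\,c(t) = s, \qquad M(t) := \bigl[\,\sigma(t),\ u_2,\ u_3,\dots,u_m\,\bigr].
\]
The matrix $M(0)$ is invertible and $c(0) = (\tilde c_1, \dots, \tilde c_m) > 0$, whereas $M(1) = [u_2, u_2, u_3, \dots, u_m]$ has rank $m-1$ with $\ker M(1) = \rset(e_1 - e_2)$, and the linear independence of $u_1,\dots,u_m$ implies $s \notin \range M(1)$.

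Let $t^\star \in (0, 1]$ be the supremum of those $t$ for which $M$ is invertible on $[0,t]$ and the unique solution $c(\cdot)$ is componentwise strictly positive there. The goal is to show that at $t^\star$ one has $c_i(t^\star) = 0$ for some $i$ while $M(t^\star)$ is still invertible; the identity
\[
s = c_1(t^\star)\,\sigma(t^\star) + \sum_{j \geq 2,\, j \neq i} c_j(t^\star)\,u_j
\]
(with the $\sigma$-term dropped when $i = 1$), rescaled back so that every unit vector $u_j$, $\sigma(t^\star)$ is replaced by the corresponding $s_\sA(\,\cdot\,)$, then yields the desired $(m-1)$-atomic positive representation. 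To rule out that $c(t)$ can be continued positively to a singularity $t^\star \leq 1$ of $M$, one projects the equation $M(t)c(t) = s$ orthogonally onto $(\lin\{u_3, \dots, u_m\})^\perp$, obtaining the two-dimensional equation $c_1(t)\,P\sigma(t) + c_2(t)\,Pu_2 = Ps$, in which $P\sigma(t)$ approaches a scalar multiple of $Pu_2$ as $t \to t^\star$, while $Ps = \tilde c_1 Pu_1 + \tilde c_2 Pu_2$ is linearly independent of $Pu_2$ because $Pu_1$ and $Pu_2$ are linearly independent (by the linear independence of $u_1,\dots,u_m$). Solving this $2 \times 2$ system and expanding near the degenerate limit forces $c_1(t)$ and $c_2(t)$ to diverge to infinities of opposite signs, contradicting positivity on $[0,t^\star)$.

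The main obstacle is precisely this singular-limit analysis: one must verify rigorously that no admissible positive trajectory of $c(t)$ can reach a singular matrix $M(t^\star)$ while all coefficients remain non-negative, and that the dominant direction of divergence always carries mixed signs on the first two coordinates. The orthogonal projection onto $(\lin\{u_3, \dots, u_m\})^\perp$ is the essential technical device: it isolates the two dangerous coefficients $c_1(t), c_2(t)$ attached to $\sigma(t)$ and $u_2$, and reduces the contradiction to an elementary two-dimensional divergence calculation along the kernel direction $e_1 - e_2$ of $M(1)$.
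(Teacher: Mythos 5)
The paper does not prove this theorem itself (it defers to [dD18a, Thm.~12] and [dDS18b, Thm.~13]), but your homotopy argument is indeed the intended strategy: reduce by linear dependence, use the pigeonhole on path components to join two normalized atoms by a path, and slide one atom along the path until a coefficient vanishes. The reduction steps, the pigeonhole, and the analysis at $t^\star$ when $M(t^\star)$ is still invertible are all fine. The gap is exactly at the step you yourself flag as the main obstacle, and your proposed resolution of it is false. Projecting onto $(\lin\{u_3,\dots,u_m\})^\perp$ and writing $P\sigma(t)=\alpha(t)Pu_1+\beta(t)Pu_2$, the $2\times 2$ system gives $c_1(t)=\tilde c_1/\alpha(t)$ and $c_2(t)=\tilde c_2-\tilde c_1\,\beta(t)/\alpha(t)$. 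If $M(t^\star)$ is singular, then $\sigma(t^\star)=\sum_{k\ge 2}a_ku_k$, so $\alpha(t)\to 0^+$ (positivity of $c_1$ forces $\alpha>0$) and $\beta(t)\to a_2$. You obtain $c_2(t)\to-\infty$, hence a contradiction, only when $a_2>0$. When $a_2<0$ both $c_1(t)$ and $c_2(t)$ diverge to $+\infty$ and there is no sign contradiction; when $a_2=0$ the limit is indeterminate. The observation that $\ker M(1)=\rset(e_1-e_2)$ does not help, because the first singular time $t^\star$ need not be $t=1$: the path may cross $\lin\{u_2,\dots,u_m\}$ earlier, at a point whose $u_2$-coordinate is nonpositive.

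Concretely: take $m=2$, $\cX=[0,2\pi)$, $a_1=\cos$, $a_2=\sin$, so that $N$ is the unit circle (one component) and the theorem holds with $\cat_\sA=1$. Let $s=u_1+u_2$ with $u_1=(1,0)$, $u_2=(0,1)$, and let $\sigma$ run from $u_1$ to $u_2$ the long way around, through $-u_2$. At the first singular time $\sigma(t^\star)=-u_2$, i.e.\ $a_2=-1$, and writing $\sigma=(\cos\theta,\sin\theta)$ one gets $c_1=1/\cos\theta$ and $c_2=1-\tan\theta$, both strictly positive on the whole interval and both tending to $+\infty$: your contradiction never materializes for this path. More generally, examining all coordinates (not just $c_1,c_2$) shows a contradiction is available iff some $a_k>0$; the genuinely bad case is $a_k\le 0$ for all $k\ge 2$, which forces $-\sigma(t^\star)\in\cone\{u_2,\dots,u_m\}\subseteq\cS_\sA$ while also $\sigma(t^\star)\in\cS_\sA$, i.e.\ the moment cone contains a line. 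Since the theorem carries no pointedness hypothesis, this case must be handled by an additional idea (a more careful choice of the path or of the pair of atoms, or a separate treatment of the lineality space). As written, the proof is incomplete precisely at its decisive step.
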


For (homogeneous) polynomials on $\rset^2$ or $\pset^2$ upper bounds for the Carath\'eodory number $\cat_\sA$ have been obtained in \cite{didio17Cara} and \cite{rienerOptima}. They are based on zeros of nonnegative polynomials and use deep results of Petrovski \cite{petrow38} on Hilbert's 6th problem. We summarize the main results in the following theorem.

\begin{thm}\label{thm:goodUpperBounds}
\begin{enumerate}[i)]
\item For $\cX = \rset^2$ we have $\cat_{\sA_{2,2d-1}} \leq \frac{3}{2}d(d-1) + 1$ for $d\in\nset$.
\item For $\cX = \pset^2$ we have $\cat_{\sB_{2,2d}} \leq \frac{3}{2}d(d-1)+2$ for $d\geq 5$.
\end{enumerate}
\end{thm}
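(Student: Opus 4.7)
The plan is to establish both bounds by the same extension-and-boundary mechanism: push an arbitrary moment sequence into a slightly larger moment cone until it reaches a boundary point, and then combine Richter's Theorem with the deep zero-count bounds of Petrovski for nonnegative polynomials in two real (or three projective) variables.

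For part (i), I would fix $s\in \cS_{\sA_{2,2d-1}}$ and first extend it to $\tilde{s}\in\cS_{\sA_{2,2d}}$ by setting $\tilde{s}_\alpha := \int x^\alpha\, d\mu$ for $|\alpha|\leq 2d$ for any chosen representing measure $\mu$ of $s$; every representing measure of $\tilde{s}$ then also represents $s$, so that $\cat_{\sA_{2,2d-1}}(s)\leq\cat_{\sA_{2,2d}}(\tilde{s})$. The fiber over $s$ of the forgetful projection $\pi:\rset^{|\sA_{2,2d}|}\to\rset^{|\sA_{2,2d-1}|}$ has affine dimension $2d+1$. Since $\cS_{\sA_{2,2d}}$ is a full-dimensional pointed convex cone (the polynomial $(1+x_1^2+x_2^2)^d$ lies in $\relint\pos(\sA_{2,2d})$), the second step is to slide $\tilde{s}$ along a direction $v\in\ker\pi$ whose negative is not in the recession cone of $\cS_{\sA_{2,2d}}$, until the ray $\tilde{s}+tv$ first meets the boundary at some $\tilde{s}'\in\partial^*\cS_{\sA_{2,2d}}$ with $\pi(\tilde{s}')=s$. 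By Lemma \ref{lem:boundaryreduction} there is a nonzero $p\in\pos(\sA_{2,2d})$ with $L_{\tilde{s}'}(p)=0$, so the atoms of every representing measure of $\tilde{s}'$ lie in $\cZ(p)$; Richter's Theorem \ref{thm:richter} applied to $\tilde{s}'$ supported on $\cZ(p)$ produces a representing measure of $\tilde{s}'$ (and hence of $s$) with at most $|\cZ(p)|$ atoms. Finally I would invoke the classical inequality of Petrovski \cite{petrow38} bounding the number of isolated real zeros of a nonnegative polynomial of degree $2d$ on $\rset^2$ by $\tfrac{3}{2}d(d-1)+1$.

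For part (ii) the strategy is the same, now carried out internally within $\cS_{\sB_{2,2d}}$: push $s$ to $\partial^*\cS_{\sB_{2,2d}}$ along a suitable direction in the kernel of a normalizing functional (e.g.\ one built from $x_0^{2d}+x_1^{2d}+x_2^{2d}$), obtain a nonnegative ternary form $p$ of degree $2d$ vanishing at all atoms of some representing measure, and apply the projective version of Petrovski's inequality for such forms, which is valid for $d\geq 5$ and yields the bound $\tfrac{3}{2}d(d-1)+2$ on the number of projective real zeros.

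The main obstacle I expect is the precise invocation of the Petrovski zero-counting bounds: these rely on non-trivial real algebraic geometry around Hilbert's 16th problem, and the small-$d$ exceptions in the projective case are exactly what force the hypothesis $d\geq 5$ in part (ii). A secondary technical point is the sliding-to-boundary step: because $\cS_{\sA_{2,2d}}$ and $\cS_{\sB_{2,2d}}$ are not known to be closed, one must verify that the first exit of the chosen ray lands in $\partial^*$ rather than in $\overline{\cS}\setminus\cS$; this is ensured by starting inside the cone at a sequence which already has a representing measure, so the half-open segment traced from $\tilde{s}$ to $\tilde{s}'$ stays in $\cS$ throughout and the limiting endpoint inherits a representing measure from Richter's Theorem applied to the restricted problem on $\cZ(p)$.
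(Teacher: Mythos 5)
The paper does not actually prove this theorem: its ``proof'' is the two citations \cite[Cor.~8.4]{rienerOptima} for i) and \cite[Thm.~62]{didio17Cara} for ii). Your sketch reproduces the broad strategy of those cited proofs (push the sequence to the boundary of a degree-$2d$ moment cone, localize the atoms in the zero set of a nonnegative polynomial, bound the number of zeros via Petrovski \cite{petrow38}), but two of the steps you dismiss as technical points are exactly where the work of those papers lies, and neither is closed by what you write. The most serious gap is the zero count. A boundary point $\tilde{s}'$ only yields \emph{some} $0\neq p\in\pos(\sA_{2,2d})$ with $L_{\tilde{s}'}(p)=0$, and $\cZ(p)$ need not be finite: $p$ could be $q^2$ for a polynomial $q$ whose real zero set is a curve. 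In that case ``at most $|\cZ(p)|$ atoms'' is vacuous, and Richter's Theorem \ref{thm:richter} on $\cZ(p)$ only returns the trivial bound $m-1$. Petrovski-type inequalities bound the number of \emph{isolated} real zeros (equivalently, the zeros of a nonnegative polynomial which has only finitely many); the hard part of \cite{rienerOptima} and \cite{didio17Cara} is precisely to arrange, by choosing the direction of descent or the supporting polynomial carefully, that one lands on a face cut out by a nonnegative polynomial with finitely many zeros. Your proposal silently assumes this.

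The second gap concerns the sliding step in part i): $\cS_{\sA_{2,2d}}$ on $\rset^2$ is not closed, so the first exit point $\tilde{s}'=\tilde{s}+t^*v$ of your ray lies in $\overline{\cS_{\sA_{2,2d}}}$ but need not lie in $\cS_{\sA_{2,2d}}$, and your proposed repair is circular: you want to apply Richter's Theorem on $\cZ(p)$ to produce a representing measure of $\tilde{s}'$, but Richter's Theorem applies only to sequences already known to be moment sequences. (This particular objection disappears in part ii), since $\cS_{\sB_{2,2d}}$ on the compact space $\pset^2$ is closed and pointed; compare the proof of Theorem \ref{thm:projective-n}, which uses exactly the decomposition $s=c\cdot s_\sA(x)+s'$ with $s'\in\partial^*\cS_{\sB_{2,2d}}$.) So the skeleton of your argument is the right one, but as written it does not constitute a proof; it reduces the theorem to the two substantive claims that the cited references are needed for.
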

\begin{proof}
i) is \cite[Cor.\ 8.4]{rienerOptima} and ii) is \cite[Thm.\ 62]{didio17Cara}.
\end{proof}

A general lower bound for sufficiently differentiable functions $a_i$ was given in \cite[Thm.\ 27]{didio17Cara}. It is  based on Sard's Theorem \cite{sard42}.

\begin{thm}[{\cite[Prop.\ 23 and Thm.\ 27]{didio17Cara}}]\label{thm:NAlowerCat}
Suppose that ${\sA}\subset C^r(\rset^n,\rset)$ with  $r > \cN_{\sA}\cdot (n+1) - m$. Then
\begin{equation}\label{eq:NAlessequalCA}
\left\lceil \frac{m}{n+1} \right\rceil\quad \leq \quad \cN_{\sA}\quad \leq\quad \cat_{\sA}.
\end{equation}
Further, the set of moment sequences $s$ which can be represented by less than $\cN_{\sA}$ atoms has $|{\sA}|$-dimensional Lebesgue measure zero in $\rset^m$.
\end{thm}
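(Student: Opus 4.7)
The plan is to split the assertion into its three pieces and treat them in turn: the rank lower bound $\lceil m/(n+1)\rceil\leq\cN_\sA$, then the measure-zero claim, from which the inequality $\cN_\sA\leq\cat_\sA$ falls out immediately.

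First I would observe that the Jacobian $DS_{k,\sA}(C,X)$ displayed in (\ref{eq:totalderivative}) is a matrix with $m$ rows and $k(n+1)$ columns, so its rank is at most $k(n+1)$. Requiring rank $m$ forces $k(n+1)\geq m$, and since $\cN_\sA$ is by definition the smallest such $k$, this yields the bound $\lceil m/(n+1)\rceil\leq\cN_\sA$.

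For the second part I would fix $k\in\{1,\dots,\cN_\sA-1\}$ and restrict $S_{k,\sA}$ to the open set $U_k:=\rset_{>0}^k\times\cX^k\subseteq\rset^{k(n+1)}$. Dropping the boundary stratum $c_i=0$ is costless: a point of $\rset_{\geq 0}^k\times\cX^k$ with some vanishing weight gives an atomic measure with fewer atoms, so its image already lies in $\range S_{j,\sA}$ for some $j<k$, which is handled by induction on $k$. By the very definition of $\cN_\sA$, every point of $U_k$ is a critical point of $S_{k,\sA}$. Sard's theorem asserts that for a $C^r$-map $f\colon U\to\rset^m$ on an open set $U\subseteq\rset^N$ with $r\geq\max\{1,N-m+1\}$, the image of its critical set has $m$-dimensional Lebesgue measure zero. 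Here $N=k(n+1)$, and combining $k\leq\cN_\sA-1$ with the assumption $r>\cN_\sA(n+1)-m$ gives $r>(k+1)(n+1)-m\geq k(n+1)-m+1$, so Sard's regularity hypothesis is met uniformly in $k$. Hence each $S_{k,\sA}(U_k)$, and therefore also the finite union $\bigcup_{k<\cN_\sA}\range S_{k,\sA}$, has Lebesgue measure zero in $\rset^m$. This union is exactly the set of moment sequences representable by fewer than $\cN_\sA$ atoms, so the final assertion of the theorem follows.

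Finally, by assumption $(*)$ the cone $\cS_\sA$ spans $\rset^m$ and therefore has positive Lebesgue measure; it cannot be contained in the null set above, so some $s\in\cS_\sA$ satisfies $\cat_\sA(s)\geq\cN_\sA$, giving $\cat_\sA\geq\cN_\sA$. The main obstacle I expect is the bookkeeping behind Sard's regularity hypothesis: the bound $r>\cN_\sA(n+1)-m$ is calibrated precisely so that Sard applies at every admissible $k<\cN_\sA$, and the small inequality chase above is exactly where this calibration is used. Everything else reduces to linear algebra on (\ref{eq:totalderivative}) together with the standard statement of Sard's theorem.
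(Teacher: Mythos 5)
Your proposal is correct and follows essentially the same route as the paper's source for this result (the paper itself only cites \cite[Prop.~23 and Thm.~27]{didio17Cara}, noting the argument ``is based on Sard's Theorem''): the column count of $DS_{k,\sA}$ gives $\lceil m/(n+1)\rceil\leq\cN_\sA$, Sard's theorem applied to $S_{k,\sA}$ for $k<\cN_\sA$ (where every point is critical by minimality of $\cN_\sA$) gives the measure-zero statement, and the full-dimensionality of $\cS_\sA$ under assumption $(*)$ then forces $\cN_\sA\leq\cat_\sA$. Your bookkeeping of the differentiability hypothesis against Sard's requirement $r\geq\max\{1,k(n+1)-m+1\}$ for all $k\leq\cN_\sA-1$ is exactly the calibration the stated bound $r>\cN_\sA(n+1)-m$ is designed for.
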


Though the previous theorem was stated for  $\cX=\rset^n$,  it remains for differential manifolds by Remark \ref{rem:manifold}.

In the preceding we mainly  reviewed the recent developments on Carath\'eodory numbers from \cite{didio17Cara} and \cite{rienerOptima}. Now we apply the considerations on the facial structure of the moment cone $\cS_\sA$ from the previous section to derive some new results.

\begin{lem}\label{lem:CaraDsBound}
$\cat_\sA(s) \leq \cD_s$ for all $s\in\cS_\sA$.
\end{lem}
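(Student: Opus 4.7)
The plan is to derive the bound directly from the structural description $\cF_s = \conv\cone\, s_\sA(\cW(s))$ established earlier, combined with the conic version of Carathéodory's theorem. The point is that $s$ lies in the relative interior of the face $\cF_s$, which is a $\cD_s$-dimensional convex cone spanned by vectors of the form $s_\sA(x)$ with $x\in\cW(s)$.

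Concretely, I would proceed as follows. First, invoke the proposition preceding Theorem \ref{thm:WsZps} to write $\cF_s=\conv\cone\, s_\sA(\cW(s))$, so that there exist $x_1,\dots,x_N\in\cW(s)$ and $\lambda_1,\dots,\lambda_N>0$ with
\[
s=\sum_{i=1}^N\lambda_i\, s_\sA(x_i),
\]
for some (a priori large) $N$. Next, let $V:=\lin\cF_s\subseteq\rset^m$; by definition of $\cD_s$ we have $\dim V=\cD_s$. Apply the Carathéodory theorem for cones inside the $\cD_s$-dimensional space $V$ to the representation above: one may thin out the set $\{s_\sA(x_i)\}$ to at most $\cD_s$ linearly independent vectors while retaining $s$ as a nonnegative combination of them, i.e.\ there is a subset $I\subseteq\{1,\dots,N\}$ with $|I|\leq\cD_s$ and positive coefficients $c_i$ such that $s=\sum_{i\in I}c_i\, s_\sA(x_i)$.

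Finally, if two indices $i\neq j$ in $I$ happen to satisfy $s_\sA(x_i)=s_\sA(x_j)$ we merge them into a single atom (this can only decrease the atom count), and we drop any index with zero coefficient. The resulting measure $\mu=\sum_{i\in I}c_i\,\delta_{x_i}$ is a representing measure of $s$ supported on $\cW(s)\subseteq\cX$ with at most $\cD_s$ atoms. Hence $\cat_\sA(s)\leq\cD_s$.

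There is no real obstacle here: the content is entirely packaged in the identity $\cF_s=\conv\cone\, s_\sA(\cW(s))$ together with the conic Carathéodory bound. The only mildly subtle point is ensuring that the Carathéodory selection can be made within $s_\sA(\cW(s))$ (as opposed to within some larger set such as $s_\sA(\cX)$), but this is precisely what the identity $\cF_s=\conv\cone\, s_\sA(\cW(s))$ guarantees, so no additional argument is required.
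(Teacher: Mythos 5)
Your proof is correct and is essentially the paper's argument: the paper simply applies Richter's Theorem to the restricted space $\cX = \cW(s)$, whose moment cone is $\cF_s = \conv\cone s_\sA(\cW(s))$ of dimension $\cD_s$. Since for a sequence already known to admit a finitely atomic representation Richter's Theorem reduces to exactly the conic Carath\'eodory theorem you invoke, the two reductions coincide; your version just makes the finite-dimensional convexity step explicit.
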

\begin{proof}
The assertion follows from Richter's Theorem \ref{thm:richter} applied to $\cX = \cW(s)$.
\end{proof}

\begin{cor}\label{cor:catBoundaryPoly}
Suppose $\sA = \sA_{n,d}$ on $\cX=\rset^n$ or $\sA = \sB_{n,d}$ ($d$ even) on $\cX=\pset^n$. Then, for $s\in\partial^*\cS_\sA := \partial\cS_\sA\cap\cS_\sA$, we have
\[\cat_\sA(s) \leq \begin{pmatrix} n+d\\ n\end{pmatrix} - n-1.\]
\end{cor}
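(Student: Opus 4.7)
The plan is to chain together two results established immediately before the corollary: Lemma \ref{lem:CaraDsBound} (which bounds the Carath\'eodory number of a moment sequence by its facial dimension) and Proposition \ref{prop:DsBoundBoundary} (which bounds the facial dimension of a boundary sequence in the polynomial case). Both of these apply directly to the hypotheses of the corollary, so the proof reduces to a one-line composition of inequalities.

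First I would invoke Lemma \ref{lem:CaraDsBound}, which gives $\cat_\sA(s) \leq \cD_s$ for any $s \in \cS_\sA$, without any restriction on $\sA$ or on $s$. Next, since by hypothesis $s \in \partial^*\cS_\sA$ and $\sA$ is either $\sA_{n,d}$ on $\rset^n$ or $\sB_{n,d}$ on $\pset^n$ with $d$ even (so that the projective setting of Proposition \ref{prop:DsBoundBoundary}, stated there as $\sB_{n,2d}$, applies), Proposition \ref{prop:DsBoundBoundary} yields $\cD_s \leq |\sA| - n - 1$.

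Finally, I would recall the dimension count $|\sA_{n,d}| = |\sB_{n,d}| = \binom{n+d}{n}$ established in Section 2, and substitute it into the previous inequality to obtain
\[
\cat_\sA(s) \leq \cD_s \leq |\sA| - n - 1 = \binom{n+d}{n} - n - 1.
\]

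There is no real obstacle here: the corollary is a formal consequence of the two preceding results, and the only thing to verify is that the parity convention for the projective case matches (the corollary's ``$d$ even'' is precisely what allows Proposition \ref{prop:DsBoundBoundary}, formulated for $\sB_{n,2d}$, to apply to $\sB_{n,d}$). The substantive work has already been done in establishing Lemma \ref{lem:CaraDsBound} (via Richter's Theorem applied to $\cW(s)$) and Proposition \ref{prop:DsBoundBoundary} (via the existence of a nonnegative $p$ vanishing on $\cW(s)$ and the linear independence of $\partial_i p, x_1 \partial_i p, \dots, x_n \partial_i p$).
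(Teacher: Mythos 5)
Your proposal is correct and is exactly the paper's argument: the paper's proof reads ``Combine Lemma \ref{lem:CaraDsBound} and Proposition \ref{prop:DsBoundBoundary}.'' Your additional remarks on the substitution $|\sA| = \binom{n+d}{n}$ and on matching the parity convention in the projective case are accurate bookkeeping that the paper leaves implicit.
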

\begin{proof}
Combine Lemma \ref{lem:CaraDsBound} and Proposition \ref{prop:DsBoundBoundary}.
\end{proof}

\begin{thm}\label{thm:projective-n}
For $\sA = \sB_{n,2d}$ on $\cX=\pset^n$ we have
\[\cat_\sA \leq \begin{pmatrix} n+2d\\ n\end{pmatrix} - n.\]
\end{thm}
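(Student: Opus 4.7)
The plan is to deduce the bound from Corollary \ref{cor:catBoundaryPoly} by a single-atom peeling argument. Write $m := \binom{n+2d}{n} = |\sB_{n,2d}|$. If $s \in \partial^* \cS_\sA$, Corollary \ref{cor:catBoundaryPoly} already yields $\cat_\sA(s) \leq m-n-1 \leq m-n$, so the only sequences that require work are interior points $s \in \inter \cS_\sA$.

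For such $s$ I would fix any point $x \in \pset^n$ and walk along the ray $s(t) := s - t \cdot s_\sA(x)$, $t \geq 0$. The aim is to identify the exit time $t^* := \sup\{t \geq 0 \,|\, s(t) \in \cS_\sA\}$, to show that $0 < t^* < \infty$ and $s(t^*) \in \partial^*\cS_\sA$, and then to apply Corollary \ref{cor:catBoundaryPoly} to $s(t^*)$. Concatenating a resulting $(m-n-1)$-atomic representing measure of $s(t^*)$ with the single atom $x$ of mass $t^*$ then exhibits $s$ as an at most $(m-n)$-atomic moment sequence, finishing the proof.

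To run the peeling step rigorously one needs two structural facts about $\cS_\sA$ for $\sA = \sB_{n,2d}$ on $\pset^n$. Closedness of $\cS_\sA$ (so that $s(t^*)$ actually lies in, not merely in the closure of, the moment cone) is immediate from the compactness of $\pset^n$ and continuity of the entries of $s_\sA$, and is covered by the closedness criteria cited after the definition of $\cL_\sA$. Pointedness of $\cS_\sA$ is witnessed by the strictly positive form $e(x) := x_0^{2d} + \cdots + x_n^{2d} \in \lin \sB_{n,2d}$: from $\langle \vec{e}, s(t)\rangle = \langle \vec{e}, s\rangle - t\,e(x) \geq 0$ one reads off $t \leq \langle \vec{e}, s\rangle / e(x) < \infty$, so the set of admissible $t$ is a compact interval $[0,t^*]$, and $t^* > 0$ because $s \in \inter \cS_\sA$. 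I do not expect a serious technical obstacle; the only delicate point is being explicit about closedness and pointedness of $\cS_\sA$, both of which boil down to the compactness of $\pset^n$ together with the existence of the strictly positive form $e$ in $\lin \sB_{n,2d}$.
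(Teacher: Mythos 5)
Your proposal is correct and follows essentially the same route as the paper: reduce to a boundary sequence by subtracting a single atom and then invoke Corollary \ref{cor:catBoundaryPoly}. The only difference is that the paper cites \cite[Prop.\ 8]{didio17Cara} for the decomposition $s = c\cdot s_\sA(x) + s'$ with $s'\in\partial^*\cS_\sA$, whereas you supply the (correct) ray--exit argument for it directly, using closedness and pointedness of $\cS_{\sB_{n,2d}}$ via compactness of $\pset^n$ and the strictly positive form $x_0^{2d}+\cdots+x_n^{2d}$.
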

\begin{proof}
Let $s\in\cS_\sA$. By \cite[Prop.\ 8]{didio17Cara} we can write $s = c\cdot s_\sA(x) + s'$ for some $x\in\pset^n$ and $c>0$ such that\ $s'\in\partial\cS_\sA = \partial^*\cS_\sA$. Then, by Corollary \ref{cor:catBoundaryPoly}, $\cat_\sA(s') \leq \begin{pmatrix} n+2d\\ n\end{pmatrix} - n-1$. Therefore, $\cat_\sA(s) \leq \begin{pmatrix} n+2d\\ n\end{pmatrix} - n$ which implies the assertion.  
\end{proof}

This slightly improves the upper bounds in the projective case. Lower bound improvements are also possible by using table \ref{tab:Ds} and the results obtained at the end of Section \ref{sec:face}.

\begin{thm}\label{thm:zeroLowerBoundCara}
Let $\sA = \sA_{n,2d}$. For  an open subset  $\cX$ of $\rset^n$  or  $\cX = [0,1]^n$, we have
\[\cat_\sA \geq \fR_{n,2d}\ \text{or}\ \fR'_{n,2d},\ \text{respectively}.\]
\end{thm}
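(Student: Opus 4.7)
The plan is to exhibit an explicit moment sequence whose Carath\'eodory number witnesses the claimed lower bound; I treat the first case in detail and handle the $\cX=[0,1]^n$ case at the end. Since $\cX$ is open in $\rset^n$, I pick $d$ real numbers $y_1<\cdots<y_d$ in arithmetic progression with $Y:=\{y_1,\dots,y_d\}^n\subseteq\cX$, and set
\[
\tilde\fp(x):=\sum_{i=1}^n\prod_{j=1}^d(x_i-y_j)^2\in\lin\sA_{n,2d}.
\]
Then $\tilde\fp\geq 0$ on $\rset^n$, so $\tilde\fp\in\pos(\sA)$, and $\cZ(\tilde\fp)=Y$. Since $L_s(\tilde\fp)=0$ combined with $\tilde\fp\geq 0$ forces every representing measure of $s$ to be supported on $Y$, the exposed face cut out by $\tilde\fp$ is
\[
\cF:=\cS_\sA\cap H_{\vec{\tilde\fp}}=\conv\cone\{s_\sA(y):y\in Y\}.
\]
Any affine change of coordinates on $\rset^n$ induces an invertible linear transformation on $\lin\sA_{n,2d}$ and hence preserves ranks of collections of moment vectors; since $Y$ is an affine image of $\{0,1,\dots,d-1\}^n$, this yields $\dim\cF=\fR_{n,2d}$.

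Next, I produce a ``generic'' point of $\cF$ that cannot be represented by fewer atoms. Let
\[
\mathcal{S}:=\bigl\{\lin\{s_\sA(y):y\in Y'\}\ :\ Y'\subseteq Y,\ \dim\lin\{s_\sA(y):y\in Y'\}<\fR_{n,2d}\bigr\}
\]
be the finite collection of proper subspaces of $\lin\cF$. Each member has Lebesgue measure zero in $\lin\cF$, and $\relint\cF$ is a nonempty open subset of $\lin\cF$, so I choose $s^*\in\relint\cF\setminus\bigcup\mathcal{S}\subseteq\cS_\sA$. Let $\nu=\sum_{i=1}^kc_i\delta_{y_i}$ be any representing measure of $s^*$ with distinct $y_i$ and $c_i>0$. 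From $L_{s^*}(\tilde\fp)=0$ and $\tilde\fp\geq 0$, we get $y_i\in Y$ for all $i$; writing $Y'':=\{y_1,\dots,y_k\}$, the identity $s^*=\sum_ic_is_\sA(y_i)$ places $s^*$ in $\lin\{s_\sA(y):y\in Y''\}$. By the choice of $s^*$, this subspace is not in $\mathcal{S}$, so its dimension equals $\fR_{n,2d}$, forcing $k=|Y''|\geq\fR_{n,2d}$. This proves $\cat_\sA\geq\cat_\sA(s^*)\geq\fR_{n,2d}$.

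The case $\cX=[0,1]^n$ is handled identically after replacing $\tilde\fp$ by $\tilde\fq(x):=\sum_{i=1}^nx_i\prod_{j=1}^{d-1}(x_i-j/d)^2(1-x_i)$, which is nonnegative on $[0,1]^n$ with zero set $\{0,1/d,\dots,1\}^n$; the rescaling $x_i\mapsto dx_i$ identifies this zero set with $\{0,1,\dots,d\}^n$ used in the definition of $\fR'_{n,2d}$, so the corresponding exposed face has dimension $\fR'_{n,2d}$ and the same argument delivers $\cat_\sA\geq\fR'_{n,2d}$. The main obstacle is the lower-bound step, resolved by the generic-point argument: any representation of $s^*$ by fewer than $\fR_{n,2d}$ atoms would trap $s^*$ in a proper subspace of $\lin\cF$ spanned by moment vectors attached to a proper subset of $Y$, and the finite union of such subspaces is a meager set inside $\lin\cF$ that a generic $s^*\in\relint\cF$ simply avoids.
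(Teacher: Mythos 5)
Your proof is correct and follows essentially the same route as the paper: the paper restricts the moment problem to the finite zero set $\cZ(\fp)$ (resp.\ $\cZ(\fq)$) and invokes Theorem \ref{thm:CaraCountable}, whose proof is precisely your generic-point argument avoiding the finite union of proper subspaces spanned by moment vectors of subsets of the grid. The only additions on your side are the (correct and worthwhile) explicit handling of the affine repositioning of the grid into an arbitrary open $\cX$ and the rescaling from $[0,d]^n$ to $[0,1]^n$, together with the observation that affine changes of variables preserve the relevant ranks.
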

\begin{proof}
For $\cX\subseteq\rset^n$ we take $\fp$ from (\ref{eq:p}) and consider the moment problem on $\cX' = \cZ(\fp)$. For $\cX = [0,1]^n$ we take $\fq$. Then Theorem \ref{thm:CaraCountable} shows  that there is a moment sequence $s\in\cS_\sA$ such that\ $\cat_\sA(s) = \dim \conv\ \cone s_\sA(\cX')$ is  $ \fR_{n,2d}$ or $\fR'_{n,2d}$, respectively.
\end{proof}

Let us briefly discuss these results. From Theorem \ref{thm:NAlowerCat} we recall the lower bound 
\begin{equation}\label{eq:CaraNaRation}
\frac{\cat_\sA}{|\sA|} \geq \frac{1}{n+1}
\end{equation}
which decreases  with increasing $n$.  Theorem \ref{thm:zeroLowerBoundCara} yields
\begin{equation}\label{eq:RationrankR}
\frac{\cat_\sA}{|\sA|} \geq \fw_{n,2d}\ \text{and}\ \fw'_{n,2d},\ \text{respectively}.
\end{equation}
As seen from table \ref{tab:Ds}, the numbers $\fw_{n,2d}$ and $\fw'_{n,2d}$ give much better estimates than (\ref{eq:CaraNaRation}), but they have to be calculated for each case. For instance,\ in the case $\sA = \sA_{5,14}$ on $\rset^5$ we have
\[\frac{\cat_\sA}{|\sA|} \geq \frac{1}{6}\approx 0.17\quad \text{from (\ref{eq:CaraNaRation}),}\qquad \text{but}\qquad \frac{\cat_\sA}{|\sA|} \geq \fw_{5,14}\approx 0.66\quad \text{from (\ref{eq:RationrankR})},\]
and in the case $\sA = \sA_{10,4}$ on $[0,1]^{10}$ we even have
\[\frac{\cat_\sA}{|\sA|} \geq \frac{1}{11}\approx 0.09\quad \text{from (\ref{eq:CaraNaRation}),}\qquad \text{but}\qquad \frac{\cat_\sA}{|\sA|} \geq \fw'_{10,4}\approx 0.89\quad \text{from (\ref{eq:RationrankR})}.\]

For $d=1$ (i.e., $\sA = \sA_{n,2}$) on $[0,1]^n$ there is the following explicit result.

\begin{thm}\label{carboundcas}
For $\sA = \sA_{n,2}$ on $\cX=[0,1]^n$ we have
\[\begin{pmatrix} n+2\\ 2\end{pmatrix} - n \quad\leq\quad \cat_\sA \quad\leq\quad \begin{pmatrix} n+2\\ 2\end{pmatrix} - 1\]
and therefore
\[\lim_{n\rightarrow\infty} \frac{\cat_{\sA_{n,2}}}{|\sA_{n,2}|} = 1.\]
\end{thm}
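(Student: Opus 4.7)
The plan is to combine two ingredients that are already established in the paper: the rank computation of Lemma \ref{lem:n2Rrank} for the lower bound, and the path-connectedness criterion of Theorem \ref{thm:m-1Bound} for the upper bound.

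For the lower bound, I would apply Theorem \ref{thm:zeroLowerBoundCara} in the case $d=1$ on $\cX = [0,1]^n$. This immediately gives $\cat_{\sA_{n,2}} \geq \fR'_{n,2}$. Lemma \ref{lem:n2Rrank} identifies this rank as $\binom{n+2}{2} - n$, which settles the left-hand inequality. This step is essentially a direct citation.

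For the upper bound, the strategy is to apply Theorem \ref{thm:m-1Bound} with $m = |\sA_{n,2}| = \binom{n+2}{2}$. Since $1 \in \sA_{n,2}$, the first coordinate of $s_\sA(x)$ equals $1$ for every $x \in [0,1]^n$, so $s_\sA(x) \neq 0$ on $\cX$ and in particular $\|s_\sA(x)\| \geq 1$ there. The map $x \mapsto s_\sA(x)/\|s_\sA(x)\|$ is therefore continuous on the path-connected set $[0,1]^n$, so its image is a single path-connected component. Since $\binom{n+2}{2} \geq 3$ for all $n \geq 1$, the hypothesis $1 \leq m-1$ of Theorem \ref{thm:m-1Bound} is satisfied, yielding $\cat_{\sA_{n,2}} \leq m-1 = \binom{n+2}{2} - 1$.

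The limit statement follows at once by sandwiching: dividing both bounds by $|\sA_{n,2}| = \binom{n+2}{2} = \tfrac{(n+2)(n+1)}{2}$ gives
\[
1 - \frac{2n}{(n+2)(n+1)} \;\leq\; \frac{\cat_{\sA_{n,2}}}{|\sA_{n,2}|} \;\leq\; 1 - \frac{2}{(n+2)(n+1)},
\]
and both outer expressions tend to $1$ as $n \to \infty$.

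I do not expect any genuine obstacle here, since both bounds reduce to results already present in the paper; the only point requiring a moment of care is verifying the hypothesis of Theorem \ref{thm:m-1Bound}, which is immediate thanks to the constant function $a_1 \equiv 1$ in $\sA_{n,2}$ and the path-connectedness of the cube $[0,1]^n$.
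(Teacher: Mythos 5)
Your proposal is correct and follows exactly the same route as the paper: lower bound from Theorem \ref{thm:zeroLowerBoundCara} combined with Lemma \ref{lem:n2Rrank}, upper bound from Theorem \ref{thm:m-1Bound}, and the limit by a direct sandwich computation. The only difference is that you spell out the verification of the path-connectedness hypothesis and the limit arithmetic, which the paper leaves implicit.
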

\begin{proof}
The upper bound is Theorem \ref{thm:m-1Bound}, while the lower bound is Theorem \ref{thm:zeroLowerBoundCara} combined with Lemma \ref{lem:n2Rrank}. The limit follows by a straightforward calculation.
\end{proof}

From Theorem \ref{carboundcas} we see that $\left(\begin{smallmatrix} n+2d\\ n\end{smallmatrix}\right) - n$ is a lower bound on the Carath\'eodory number for $\sA = \sA_{n,2}$ on $\cX = [0,1]^n$, but it is also an upper bound for $\sA = \sB_{n,2d}$ on $\cX = \pset^n$  by Theorem \ref{thm:projective-n}. In fact, $\left(\begin{smallmatrix} n+2d\\ n\end{smallmatrix}\right) - n$ is also a lower bound on the  face dimension of the moment cone $\cS_\sA$ for $\sA = \sA_{n,2}$ ($d=1$) on $\cX = [0,1]^n$, but for $\sA = \sA_{n,2d}$ or $\sB_{n,2d}$ on $\cX = \rset^n$ or $\pset^n$ we have an upper bound of the face dimension of $\left(\begin{smallmatrix} n+2d\\ n\end{smallmatrix}\right) - n - 1$ by Corollary \ref{cor:catBoundaryPoly}. Thus, changing the set $\cX$ from $\rset^n$ (or $\pset^n$) to $[0,1]^n$ has drastic effects on  the moment cone and its Carath\'eodory number.

To demonstrate the drastic effect of higher dimensions $n$ we give also the following flat extension example.

\begin{exm}\label{exm:flatextension}
Let $(n,d) = (5,7)$ and $s$ be from table \ref{tab:Ds}, i.e., $s = \sum_{x\in\cZ(\fp)} s_\sA(x)$ is from the end of Section \ref{sec:face}. Then all $11628$ moments of $s$ are collected in a $792\times 792$ Hankel matrix since $\left(\begin{smallmatrix} 7+5\\ 5\end{smallmatrix}\right) = 792$. But from table \ref{tab:Ds} we find that $\cat_\sA(s) = 7678$, i.e., $s$ needs at least $7678$ atoms in a representing measure. So applying flat extension we have to extend the original $792\times 792$ Hankel matrix to an at least $7679\times 7679$ Hankel matrix. We have all moments up to degree $2d = 14$ and must extend them to at least degree $24$ since $\left(\begin{smallmatrix} 12+5\\ 5\end{smallmatrix}\right) = 6188$ but at most degree $26$ since $\left(\begin{smallmatrix} 13+5\\ 5\end{smallmatrix}\right) = 8568$ if all additional $k$ moments are optimally chosen. Then $k$ is bounded by
\[\begin{pmatrix} 24+5\\ 5\end{pmatrix} - \begin{pmatrix} 14+5\\ 5\end{pmatrix} = 107\,127\quad\leq\quad k\quad\leq\quad \begin{pmatrix} 26+5\\ 5\end{pmatrix} - \begin{pmatrix} 14+5\\ 5\end{pmatrix} = 158\,283.\]
\end{exm}

The previous example shows that the application of flat extension to larger systems might not be possible.

There are several reasons which damp the hope of finding upper bounds that are significantly lower than those given in Theorem \ref{thm:goodUpperBounds}. First, the proofs in \cite{didio17Cara} and \cite{rienerOptima} are tight and based on Petrovski's deep result \cite{petrow38}; it seems hardly possible to improve the corresponding bounds in this manner since the number of isolated zeros exceed the number $m$ of monomials as seen from table \ref{tab:Ds}. Secondly, Theorems \ref{thm:zeroLowerBoundCara} and \ref{carboundcas} combined with the lower bounds in table \ref{tab:Ds} indicate that strong improvement cannot be expected, see also the growth of the lower bounds $\fw_{n,2d}$ in fig.\ \ref{fig:lower}. This indicates that further investigations of the inequalities in (\ref{eq:trends}) and the possible limits in (\ref{eq:limitsW}) are important.

In table \ref{tab:Ds}  only the simple polynomials $\fp$ and $\fq$ with large but finite numbers of zeros have been used. It is natural to ask whether or not the lower bounds of the Carath\'eodory number $\cat_\sA$ in table \ref{tab:Ds} can be (significantly) improved by using other non-negative polynomials with finitely many zeros (see Problem \ref{open:otherPolynomials})?

Another variant of the Carath\'eodory number problem  is to allow  \emph{signed} measures and to study \emph{signed} Carath\'eodory numbers $\cat_{\sA,\pm}$. By Proposition \ref{prop:signedMeasures},  every vector $s\in\rset^m$ has a representing $k$-atomic measure with $k\leq m$. This leads to the following definition.

\begin{dfn}\label{dfn:signedCat}
The \emph{signed Carath\'eodory number $\cat_{\sA,\pm}(s)$} of $s\in\rset^m$ is 
\[\cat_{\sA,\pm}(s) := \min \{k\in\nset \,|\, s \in S_{k,\sA}(\rset^k\times\cX^k)\}\]
and the \emph{signed Carath\'eodory number $\cat_{\sA,\pm}$} is
\[\cat_{\sA,\pm} := \min \{k\in\nset \,|\, S_{k,\sA}(\rset^k\times\cX^k) = \rset^m\} \equiv \max_{s\in\rset^m} \cat_{\sA,\pm}(s).\]
\end{dfn}

For the signed Carath\'eodory number $\cat_{\sA,\pm}$ we have the following result.

\begin{thm}[{\cite[Thm.\ 25]{didio17Cara}}]\label{thm:signedupperBound}
Suppose $\cX$ is an open subset $\rset^n$ and $a_i\in C^1(\cX,\rset)$ for all $i$. Then
\[\cat_{\sA,\pm} \leq 2\cN_\sA.\]
\end{thm}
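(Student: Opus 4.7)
The plan is to show that the signed moment map with $k=\cN_\sA$ atoms has open image in $\rset^m$, and then exploit the scaling invariance of the signed-representation set to cover all of $\rset^m$ by sums of two such representations.

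First I would consider the moment map $S_{k,\sA}$ from Definition \ref{dfn:momentCurveMap} extended to allow signed coefficients, i.e.\ viewed as a $C^1$ map $\tilde S \colon \rset^k \times \cX^k \to \rset^m$ for $k=\cN_\sA$. By Definition \ref{dfn:NA} there is a point $(C_0, X_0)$ at which $\rank D\tilde S(C_0,X_0)=m$, so $\tilde S$ is a submersion there. The open mapping theorem for $C^1$ submersions then gives an open neighborhood $W$ of $(C_0, X_0)$ whose image $U := \tilde S(W)$ is open in $\rset^m$. Let $V \subseteq \rset^m$ denote the set of all signed $k$-atomic sequences; clearly $U\subseteq V$, and $V$ is closed under multiplication by every $\lambda\in\rset$ (the coefficients $c_i$ simply rescale), so $V=-V$ and $V \supseteq \lambda U$ for every $\lambda\in\rset$.

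Since $U$ is open and nonempty, it contains some open ball $B(s_0,r)$; we may assume $s_0\neq 0$ (otherwise replace $s_0$ by any nonzero point of $U$ and shrink the ball). For every $\lambda\in\rset\setminus\{0\}$ we then have
\[
B(\lambda s_0, |\lambda| r) = \lambda\cdot B(s_0,r) \subseteq \lambda U \subseteq V .
\]
Given an arbitrary $s\in\rset^m$, choose $\lambda>\|s\|/r$. The point $\lambda s_0$ lies in $V$ since $s_0\in V$ and $V$ is scaling-invariant. Moreover,
\[
\|\,(s-\lambda s_0)-(-\lambda s_0)\,\| = \|s\| < \lambda r,
\]
so $s-\lambda s_0 \in B(-\lambda s_0, \lambda r)\subseteq V$ as well. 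Writing $s = \lambda s_0 + (s-\lambda s_0)$ and concatenating the two signed $k$-atomic representations yields a signed $2k$-atomic representation of $s$. Hence $\cat_{\sA,\pm}(s) \leq 2k = 2\cN_\sA$ for every $s\in\rset^m$, giving the desired bound.

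The main technical point is the open-image step: one needs the rank condition of Definition \ref{dfn:NA} to apply with \emph{signed} coefficients (which it does, since that definition already takes $C\in\rset^k$), and one needs $C^1$ regularity to invoke the submersion theorem (which is exactly the hypothesis $a_i\in C^1(\cX,\rset)$ of the theorem). The remainder of the argument—the scaling/telescoping to cover all of $\rset^m$—is a purely elementary consequence of $V$ being a scaling-invariant set with nonempty interior.
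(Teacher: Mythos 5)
Your argument is correct: the full-rank point guaranteed by Definition \ref{dfn:NA} makes the signed moment map $S_{\cN_\sA,\sA}$ a $C^1$ submersion near $(C_0,X_0)$, so its image contains a nonempty open set, and your scaling/translation step then writes every $s\in\rset^m$ as a sum of two signed $\cN_\sA$-atomic sequences. The paper states this theorem without proof, citing \cite[Thm.\ 25]{didio17Cara}, and your argument is essentially the one given there (open image of the signed moment map combined with the scale-invariance of the set of signed $k$-atomic sequences), so nothing further is needed.
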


Comparing (\ref{eq:CaraNaRation}) and (\ref{eq:RationrankR}) we see that  $\fw_{n,2d}$  gives a much better lower bound than $\cN_\sA$. For the signed Carath\'eodory number $\cat_{\sA,\pm}$ we have the upper bound $2\cN_\sA$ by Theorem \ref{thm:signedupperBound}.  Combining Theorem \ref{thm:signedupperBound} with Theorem \ref{thm:m-1Bound} we get
\begin{equation}\label{eq:uppersignedCombined}
\cat_{\sA,\pm} \leq \min\{2\cN_\sA,m-1\}.
\end{equation}
In \cite[Sec.\ 7]{didio17Cara} we used the apolar scalar product to relate the signed Carath\'eodory number $\cat_{\sA,\pm}$ to the real Waring rank. Note that (\ref{eq:uppersignedCombined}) gives a sharp bound, since G.\ Blekherman \cite{blekhe15a} showed that there is a $f\in\rset[x,y]_{d}$ which can be written as
\[f(x,y) = \sum_{i=1}^k c_i(a_i x + b_i y)^d\]
with $k=d$, but not with $k<d$.

\section{Internal structure of $\cS_\sA$}
\label{sec:inner}

In Definition \ref{dfn:momentCurveMap} we already defined the moment map $S_{k,\sA}$. For $\cX\subseteq\rset^n$ open and $a_i\in C^1(\cX,\rset)$ the moment map  is a $C^1$-mapping and we can investigate the atomic measures $\mu = (C,X) = \sum_{i=1}^k c_i \delta_{x_i}$. The following important definition is taken from \cite[Def.\ 26]{didio17Cara}.

\begin{dfn}\label{dfn:regular}
An atomic measure $\mu_{(C,X)} = \sum_{i=1}^k c_i \delta_{x_i}$ is called
\begin{itemize}
\item \emph{regular} if the matrix $DS_{k,\sA}(C,X)$ is regular (i.e., it has full rank),
\item \emph{singular} if the matrix $DS_{k,\sA}(C,X)$ is singular (i.e., does not have full rank).
\end{itemize}
A moment sequence $s\in\cS_\sA$ is called \emph{regular} iff $S_{k,\sA}^{-1}(s)$ is empty or consists only of regular measures. Otherwise, $s$ is called \emph{singular}.
\end{dfn}

Being singular or regular might depend on the number $k$ according to the preceding definition. Obviously, if $s$ is singular for $k$, so  is for all $k'\geq k$. We dont know whether or not  is it possible that $s$ is regular for  $k$ and  singular for some $k'>k$ (see Problem \ref{open:regular} below)?

\begin{exm}\label{exm:completeoneDim}
In the one-dimensional case $\sA = \{1,x,\dots,x^d\}$ on $\cX=\rset$ it follows from \cite[Lem.\ 35]{didio17Cara} that
\begin{equation}\label{eq:interregular}
s\in\inter\cS_\sA \quad\Leftrightarrow\quad s\ \text{is regular}.
\end{equation}
\end{exm}

In this section we want to investigate this relation (\ref{eq:interregular}).

For the one-dimensional polynomial case $\sA = \{1, x^{d_2},\dots,x^{d_m}\}$ ($d_m = 2m$) with gaps we observed in \cite[Thm.\ 45]{didio17Cara} that when $DS_{k,\sA}(\mu)$ is singular then $k < \lceil\frac{m}{2}\rceil$ if the condition in \cite[eq.\ (32)]{didio17Cara} is fulfilled. But a singular moment sequence might be an inner point of $\cS_\sA$, as the following example shows.

\begin{exm}\label{exm:InterSingular}
Let $\sA = \{1,x^2,x^3,x^5,x^6\}$ on $\cX=\rset$. In {\cite[Exm.\ 46]{didio17Cara}} we showed with Theorem \ref{thm:onedimCara} that $\cat_\sA = \cN_\sA = 3$. We will prove here that (\ref{eq:interregular}) does not hold, so there exists an inner point of $\cS_\sA$ which  is singular.

Let $x_1= 1$,  $x_2=2$ and $s = c_1 s_\sA(x_1) + c_2 s_\sA(x_2)$ for some $c_1,c_2>0$. Then $s$ is singular, since $\rank DS_{2,\sA}((c_1,c_2),(x_1,x_2)) = 4 < 5 = |\sA|$. But $s$ is not a boundary moment sequence. If $s$ would be a boundary sequence, then there exists  a $p\in\pos(\sA)$ with $x_1,x_2\in\cZ(p)$. From
\[\ker DS_{2,\sA}(\one,(x_1,x_2))^T = \ker \begin{pmatrix}
1 & 1 & 1 & 1 & 1\\
0 & 2 & 3 & 5 & 6\\
1 & 4 & 8 & 32 & 64\\
0 & 4 & 12 & 80 & 192
\end{pmatrix} = v\cdot\rset\]
with $v= (52,-231,225,-63,17)^T$, we find that
\[p(x) = \langle v,s_\sA(x)\rangle = (x-1)^2 (x-2)^2 (17x^2 + 39x + 13)\]
is the only possible polynomial in $\cA$ with $p(x_i) = p'(x_i) = 0$. But it is indefinite  since $p(-1)=-324$ and $p(0) = 52$, i.e., $s$ is no boundary point but an inner point.
\end{exm}

Thus, Example \ref{exm:InterSingular} is an example where $\cat_\sA = \cN_\sA$ holds but (\ref{eq:interregular}) does not. Next we show that it is also possible that (\ref{eq:interregular}) holds but $\cat_\sA \neq \cN_\sA$.

\begin{exm}\label{exm:BoundarySingular}
Let $\sA = \{1,x,x^2,x^6\}$ on  $\cX=\rset$. Then we have seen in \cite[Exm.\ 48]{didio17Cara} that\, $\cat_\sA = 3 > \cN_\sA = 2$. By a straightforward calculation we verify that
\[\det DS_{2,\sA}(\one,(x,y)) = 2 (x - y)^4 q_\sA(x,y) \quad\text{with}\quad q_\sA(x,y)= (x + y) (2 x^2 + x y + 2 y^2)\]
and $\cZ(q_\sA) = \{(a,-a) \,|\, a\in\rset\}$. Hence $c_1 s_\sA(x) + c_2 s_\sA(y)$ is regular iff $x\neq y$ or $-y$. 

Let $s$ be singular. Then $s = c_1 s_\sA(a) + c_2 s_\sA(-a)$ and $s\in\partial^*\cS_\sA$, since
\[p_a(x) := x^6  - 3a^4 x^2 + 2a^6 = (x-a)^2(x+a)^2(2a^2+x^2) \in\pos(\sA),\]
that is, (\ref{eq:interregular}) holds.
\end{exm}

At last, we  give an example where neither $\cat_\sA = \cN_\sA$ nor (\ref{eq:interregular}) hold.

\begin{exm}\label{exm:InterSingularMulti}
Let
\begin{align*}
\sA = \sB_{2,6} = \{
& x^6, x^5 y, x^4 y^2, x^3 y^3, x^2 y^4, x y^5, y^6, x^5 z, x^4 y z, x^3 y^2 z, x^2 y^3 z,\\
& x y^4 z, y^5 z, x^4 z^2, x^3 y z^2, x^2 y^2 z^2, x y^3 z^2, y^4 z^2, x^3 z^3, x^2 y z^3,\\
& x y^2 z^3, y^3 z^3, x^2 z^4, x y z^4, y^2 z^4, x z^5, y z^5, z^6\}
\end{align*}
on $\cX=\pset^2$. $\cN_\sA = 10$ by Theorem \ref{thm:NApolynomial}, but it is known that $\cat_\sA = 11$ \cite{kunertPhD14}. Let
\begin{align*}
X =  \{ \xi_1,\dots,\xi_{10}\}
= \{&(1, 0, 0),(0, 1, 0),(0, 0, 1),(1, 1, 0),(1, 0, 1),\\
&(0, 1, 1),(1, -1, 0),(1, 0, -1),(1, 1, 1),(1, -1, 1)\}
\end{align*}
be $10$ projective points. Set $s: = \sum_{i=1}^{10} c_i s_\sA(\xi_i)$ for some numbers $c_1,\dots,c_{10} >0$. Since $\rank DS_{10,\sA}(\one,X)=27 < 28 = |\sA|$, the moment sequence $s$ is singular. 

But $s$ is not a boundary moment sequence. Assume the contrary. Then there exists a $p\in\pos(\sA)$ with $X\subseteq\cZ(p)$. From $\ker DS_{10,\sA}(\one,X)^T = v\cdot\rset$ with
\[v = (0, 0, 0, 0, 0, 0, 0, 0, 0, 1, 0, -1, 0, 0, -1, 0, 2, 1, 0, 0, -2, -2, 0, 1, 1, 0, 0, 0)^T\]
we find that
\[p(x) = \langle v,s_\sA(x,y,z)\rangle = y (x + y) (x - z) (y - z) z (x - y + z)\]
or a multiple of it. Since $p(1,2,3)=72$ and $p(6,2,3)=-1008$,  $p$ is indefinite, a contradiction. This proves that $s$ is not a boundary point.

Therefore, $s$ is a singular  inner point of the moment cone. Thus, $\sB_{2,6}$ on $\cX=\pset^2$ is an example such that 
$\cat_\sA\neq \cN_\sA$ and relation (\ref{eq:interregular}) does not hold.
\end{exm}

We summarize the results of all four examples in table \ref{tab:possibleCombinations}.
\begin{table}[!ht]\center
\caption{All possible combinations of ``$\cat_\sA=\cN_\sA$'' and ``$s$ regular $\Leftrightarrow$ $s\in\inter\cS_\sA$'' (eq.\ \ref{eq:interregular}).}\label{tab:possibleCombinations}
\begin{tabular}{rc|c|c}
\hline\hline
& & \multicolumn{2}{c}{\multirow{2}{*}{``$s$ regular $\Leftrightarrow$ $s\in\inter\cS_\sA$''}}\\ 
&\\
&       & TRUE & FALSE \\ \hline
\multirow{4}{*}{``$\cat_\sA=\cN_\sA$''} & \multirow{2}{*}{TRUE} & $\sA=\{1,x,\dots,x^d\}$ & $\sA=\{1,x^2,x^3,x^5,x^6\}$ \\
&       & (Example \ref{exm:completeoneDim}) & (Example \ref{exm:InterSingular})\\ \cline{2-4}
& \multirow{2}{*}{FALSE} & $\sA=\{1,x,x^2,x^6\}$ & $\sA = \sB_{2,6}$\\
& & (Example \ref{exm:BoundarySingular})    & (Example \ref{exm:InterSingularMulti})\\ \hline\hline
\end{tabular}
\end{table}
The table \ref{tab:possibleCombinations} shows that ``$\cat_\sA = \cN_\sA$'' (true T or false F) and (\ref{eq:interregular}) (T or F) are independent properties. Example \ref{exm:InterSingularMulti} indicates that (F,F) is probably the largest and most common class. Our only examples for (T,T) are $\sA=\{1,x,\dots,x^d\}$ on $\rset$ (Example \ref{exm:completeoneDim}) and $\sA = \{x^{d_1},\dots,x^{d_m}\}$ on $(0,\infty)$ (Theorem \ref{thm:oneDimGapsOpen}). Several natural questions arise:

\textit{Are Example \ref{exm:completeoneDim} and Theorem \ref{thm:oneDimGapsOpen} the only examples in (T,T), that is, for which $\cat_\sA = \cN_\sA$ and (\ref{eq:interregular}) holds (Problem \ref{open:internal})? If there are others, can they be completely described? Besides the two independent properties ``$\cat_\sA = \cN_\sA$''  and (\ref{eq:interregular}), are there other basic properties to distinguish moment problems?}

\section{Applications to SOS and Tensor Decompositions}
\label{sec:appl}

In this section and the next we  discuss some applications of the previous results. 
Let us begin with \emph{sums of squares}. The following definition is an adaption of Definition \ref{dfn:momentCurveMap} to the study of sums of squares. For a set $\sA = \{a_1,\dots,a_m\}$ of $m$ elements (e.g.\ measurable function) we set $\sA^2 := \{a_i a_j \;|\; i,j=1,\dots,m\}$.

\begin{dfn}
Let $\sA=\{a_1,\dots,a_m\}$. The \emph{square curve $p_\sA$} is defined by
\[p_\sA:\rset^m\rightarrow\lin\sA^2,\ y=(y_1,\dots,y_m)\mapsto p_\sA(y) := (y_1 a_1 + \dots + y_m a_m)^2\]
and the \emph{square map $P_{k,\sA}$} is
\[P_{k,\sA}:\rset^{k\cdot m}\rightarrow\lin\sA^2, Y=(Y_1,\dots,Y_m)\mapsto P_{k,\sA}(Y) := \sum_{i=1}^k p_\sA(Y_i)\]
with $Y_i=(y_{i,1},\dots,y_{i,m})$. $\Sigma\sA^2$ denotes the sum of squares in $\lin\sA^2$.
\end{dfn}

The following lemma collects straightforward results adapted from results in the previous sections.

\begin{lem}\label{lem:fulldimCones}
Let $\sA = \{a_1,\dots,a_m\}$.
\begin{enumerate}[i)]
\item \label{item:bounds} $m \leq \dim\lin\sA^2 \leq m^2$. If $a_i a_j = a_j a_i$, then $\dim\lin\sA^2 \leq \frac{m(m+1)}{2}$.
\item For all $k\geq \dim\lin\sA^2$ we have
\begin{equation}\label{eq:pythagorasNo}
\Sigma\sA^2 = \range P_{k,\sA^2} = P_{k,\sA^2}(\rset^{m\cdot k})
\end{equation}

\item \label{item:sosfulldim} $\Sigma \sA^2$ is a closed full-dimensional cone in $\lin\sA^2$.
\item $\pos(\sA^2)$ is a closed full-dimensional cone in $\lin\sA^2$.
\end{enumerate}
\end{lem}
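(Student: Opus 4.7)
The plan is to handle the four items in order, reducing the later ones to the earlier when possible.

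For (i), the upper bounds $\dim\lin\sA^2\leq m^2$ and, in the commutative case, $\leq\frac{m(m+1)}{2}$, are immediate from $|\sA^2|\leq m^2$ (respectively $\binom{m+1}{2}$). For the lower bound $\dim\lin\sA^2\geq m$, my plan is to first observe that $\lin\sA^2$ is invariant under any linear change of basis of $\sA$: if $\tilde{a}_i=\sum_j M_{ij} a_j$ with $M$ invertible, then $\tilde{a}_i\tilde{a}_j=\sum_{k,l} M_{ik} M_{jl}\, a_k a_l\in\lin\sA^2$, and symmetrically $a_i a_j\in\lin\tilde{\sA}^2$. By $(*)$ one can pick points $x_1,\dots,x_m\in\cX$ with $(a_j(x_i))_{i,j}$ invertible; after changing the basis of $\sA$ so that $\tilde{a}_i(x_j)=\delta_{ij}$, the squares satisfy $\tilde{a}_i^2(x_j)=\delta_{ij}$, so $\tilde{a}_1^2,\dots,\tilde{a}_m^2$ are linearly independent in $\lin\tilde{\sA}^2=\lin\sA^2$.

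For (ii), set $N:=\dim\lin\sA^2$. By definition $\Sigma\sA^2$ is the conic hull of the ``square curve'' $p_\sA(\rset^m)\subseteq\lin\sA^2$, where I use the 2-homogeneity $c\cdot p_\sA(Y)=p_\sA(\sqrt{c}\,Y)$ for $c\geq 0$ to absorb positive coefficients into the argument. I would then invoke Carath\'eodory's theorem in the $N$-dimensional ambient space $\lin\sA^2$ (equivalently Richter's Theorem \ref{thm:richter} applied after expanding $p_\sA$ in a basis of $\lin\sA^2$) to conclude $\Sigma\sA^2=\range P_{N,\sA}$, and hence $\Sigma\sA^2=\range P_{k,\sA}$ for every $k\geq N$.

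For (iii), full-dimensionality is quick: the polarization identity $2a_i a_j=(a_i+a_j)^2-a_i^2-a_j^2$ places every generator of $\lin\sA^2$ in the real span of $\Sigma\sA^2$, so $\Sigma\sA^2$ spans $\lin\sA^2$. The main obstacle is closedness. Here I plan to exploit 2-homogeneity $P_{N,\sA}(tY)=t^2 P_{N,\sA}(Y)$ together with the fact that $P_{N,\sA}$ does not vanish on the unit sphere: if $\sum_i p_\sA(Y_i)=0$ in $\lin\sA^2$, then since each $p_\sA(Y_i)$ is a pointwise nonnegative function and the sum is zero, each $p_\sA(Y_i)$ must vanish, forcing $Y_i=0$ by linear independence of $\sA$. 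Compactness of the unit sphere then gives a constant $c>0$ with $\|P_{N,\sA}(Y)\|\geq c\|Y\|^2$, so any convergent sequence $s_n=P_{N,\sA}(Y_n)\to s$ has bounded $Y_n$; extracting a convergent subsequence $Y_{n_k}\to Y$ yields $s=P_{N,\sA}(Y)\in\Sigma\sA^2$.

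Finally, (iv) follows directly: $\pos(\sA^2)=\bigcap_{x\in\cX}\{p\in\lin\sA^2:p(x)\geq 0\}$ is an intersection of closed half-spaces with respect to the evaluation functionals $p\mapsto p(x)$, hence closed, and the inclusion $\Sigma\sA^2\subseteq\pos(\sA^2)$ together with the full-dimensionality from (iii) provides the full-dimensionality of $\pos(\sA^2)$.
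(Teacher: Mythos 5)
Your proof is correct and, since the paper states this lemma without proof (describing it only as a collection of straightforward adaptations of earlier results), it supplies exactly the details one would expect: Carath\'eodory's theorem (equivalently Richter's Theorem applied after expanding in a basis of $\lin\sA^2$) for (ii), polarization plus the standard coercivity estimate $\|P_{N,\sA}(Y)\|\geq c\|Y\|^2$ on the sphere for closedness in (iii), and an intersection of closed half-spaces for (iv). Your basis-change argument for the lower bound $m\leq\dim\lin\sA^2$ in (i) is a worthwhile refinement, since the squares $a_1^2,\dots,a_m^2$ of an arbitrary linearly independent family need not themselves be linearly independent (e.g.\ $a_1=1$, $a_2=x$ on $\cX=\{-1,1\}$), so the naive argument would fail while yours does not.
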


\begin{dfn}
The \emph{Pythagoras number $\pyt_{\sA^2}$ of $\sA^2$} is
\[\pyt_{\sA^2} := \min \{k\in\nset \;|\; \text{(\ref{eq:pythagorasNo}) holds}\}.\]
\end{dfn}

With the next proposition we illustrate the use of the square map by giving a simple proof of the well-known fact that the Pythagoras number of $\rset[x_1,\dots,x_n]$ is $\infty$ \cite[Sec.\ 8.1]{prestelPosPoly}. This proof follows some arguments from \cite[Prop.\ 23 and Thm.\ 27]{didio17Cara}.

\begin{prop}\label{prop:pythagoras}
\begin{enumerate}[i)]
\item $\left\lceil\frac{|\dim\lin\sA^2|}{|\sA|}\right\rceil \leq \pyt_{\sA^2}$.
\item  For $n\geq 2$,  the Pythagoras number of $\rset[x_1,\dots,x_n]$  is $\infty$.
\end{enumerate}
\end{prop}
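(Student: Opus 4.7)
For part~(i) I would mimic the dimension-counting argument from \cite[Prop.~23]{didio17Cara}. The square map $P_{k,\sA}:\rset^{km}\to\lin\sA^2$ is polynomial, hence smooth, so its image has Lebesgue measure zero in $\lin\sA^2$ whenever $km<\dim\lin\sA^2$ (either by Sard's theorem or by a direct manifold-dimension count, since the image is then contained in a countable union of smooth submanifolds of dimension $\leq km$). On the other hand, $\Sigma\sA^2$ is full-dimensional in $\lin\sA^2$ by Lemma~\ref{lem:fulldimCones}(iii) and hence has non-empty interior, so it cannot be a measure-zero set. Surjectivity $P_{k,\sA}(\rset^{km})=\Sigma\sA^2$ therefore forces $km\geq\dim\lin\sA^2$, which is the claimed bound $\lceil\dim\lin\sA^2/m\rceil\leq\pyt_{\sA^2}$.

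For part~(ii), the goal is to exhibit, for each $k\in\nset$, a finite $\sA\subset\rset[x_1,\dots,x_n]$ with $\pyt_{\sA^2}>k$; by part~(i) it suffices to make the ratio $\dim\lin\sA^2/|\sA|$ exceed $k$. When $n\geq 2$ one can do this via a Sidon-type monomial family. A concrete choice is
\[
\sA_m=\{x_1^{i}\,x_2^{\,i^{2}}\,:\,i=0,1,\dots,m-1\}\subseteq\rset[x_1,x_2]\subseteq\rset[x_1,\dots,x_n].
\]
The algebraic identity $2ij=(i+j)^2-(i^2+j^2)$ shows that the pair $(i+j,\,i^2+j^2)$ determines the unordered pair $\{i,j\}$ uniquely, so the $\binom{m+1}{2}$ products $a_ia_j=x_1^{i+j}x_2^{\,i^2+j^2}$ with $i\leq j$ are pairwise distinct monomials, hence linearly independent. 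Consequently $|\sA_m|=m$ while $\dim\lin\sA_m^{\,2}=\binom{m+1}{2}$, and part~(i) yields $\pyt_{\sA_m^{\,2}}\geq\lceil(m+1)/2\rceil$, which tends to $\infty$ as $m\to\infty$.

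\textbf{Main obstacle.} The essential ingredient is precisely the Sidon configuration of exponents: one needs pairwise products to stay linearly independent, and this genuinely requires at least two variables — mirroring the sharp contrast with the one-variable case, where $p(\rset[x])=2$. A secondary and more delicate point is the passage from ``Pythagoras number of $\sA$'' to ``Pythagoras number of the whole ring'': part~(i) only controls representations whose squared polynomials lie in $\lin\sA$. For the convex monomial family $\sA=\sA_{n,d}$ the two notions coincide by the usual Newton-polytope/degree argument (leading forms of squares cannot cancel, so any ring-wide SOS representation of an element of $\rset[x]_{\leq 2d}$ automatically uses polynomials of degree $\leq d$), and this bridge combined with the Sidon-type construction above forces the ring Pythagoras number to be infinite.
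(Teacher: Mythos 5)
Part (i) of your proposal is exactly the paper's argument: $P_{k,\sA}$ is a smooth map from $\rset^{km}$, so for $km<\dim\lin\sA^2$ its image cannot exhaust the full-dimensional cone $\Sigma\sA^2$ of Lemma \ref{lem:fulldimCones}, whence $\pyt_{\sA^2}\geq\lceil\dim\lin\sA^2/m\rceil$. Nothing to add there.

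Part (ii) is where you diverge from the paper, and where there is a genuine gap. The paper takes $\sA=\sA_{n,d}$, for which your ``bridge'' is automatic (by the leading-form argument, every ring-wide SOS representation of a polynomial of degree $\leq 2d$ already has its squares in $\lin\sA_{n,d}$), and obtains divergence from the ratio $\binom{n+2d}{n}\binom{n+d}{n}^{-1}$, letting $n\to\infty$. You instead try to get divergence inside a fixed $\rset[x_1,x_2]$ via the Sidon family $\sA_m=\{x_1^{i}x_2^{i^2}\}$. The Sidon count itself is correct — it is the same device the paper uses in Example \ref{exm:pythlargegaps} for univariate monomials with gaps — and it does give $\pyt_{\sA_m^2}\geq\lceil(m+1)/2\rceil$. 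But that is the \emph{constrained} Pythagoras number: part (i) only bounds the number of squares drawn from $\lin\sA_m$. Your closing sentence transfers this to the ring by invoking the bridge, yet the bridge you justified is for the convex family $\sA_{n,d}$, not for $\sA_m$, and for $\sA_m$ it fails: half the Newton polytope of a generic element of $\Sigma\sA_m^2$ is the convex hull of the points $\bigl(\tfrac{i+j}{2},\tfrac{i^2+j^2}{2}\bigr)$, which contains lattice points off the parabola $\{(i,i^2)\}$ (already for $m=3$ the exponent $(1,2)$, i.e.\ the monomial $x_1x_2^2$, is admissible), so a ring-wide representation may use extra monomials and possibly fewer squares. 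Conversely, for the family where the bridge does hold, $\sA_{n,d}$ with $n$ fixed, the dimension count is useless for your purpose, since $\binom{n+2d}{n}\binom{n+d}{n}^{-1}\to 2^n$ as $d\to\infty$ stays bounded — which is also why the paper's displayed limit is taken in $n$ rather than in $d$. As written, your argument therefore proves that the constrained numbers $\pyt_{\sA^2}$ are unbounded over finite $\sA\subset\rset[x_1,x_2]$, but not that the Pythagoras number of the ring $\rset[x_1,\dots,x_n]$ is infinite; closing that gap requires an input beyond pure dimension counting.
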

\begin{proof}
i): The map $P_{k,\sA}:\rset^{k\cdot m}\rightarrow\lin\sA^2$ is a $C^\infty$-map. Since $\Sigma\sA^2$ is full-dimensional by Lemma \ref{lem:fulldimCones}\,\ref{item:sosfulldim}), it has a non-empty interior. By Sard's Theorem \cite{sard42} the set of regular values in $\Sigma\sA^2$ is dense. Hence $k\geq \left\lceil\frac{|\dim\lin\sA^2|}{|\sA|}\right\rceil$.

ii): Set $\sA = \sA_{n,d}$. Then $\lin\sA^2 = \lin\sA_{n,2d}$. So for $\Sigma_{n,2d}$ with $n\geq 2$ there is a sum of squares with at least $\left\lceil\frac{|\sA_{n,2d}|}{|\sA_{n,d}|}\right\rceil$ many squares and the required number of squares does not decrease with increasing $d$ since higher terms have non-negative coefficients. Therefore, in $\rset[x_1,\dots,x_n]$ we have
\[\left\lceil\frac{|\sA_{n,2d}|}{|\sA_{n,d}|}\right\rceil = \left\lceil \begin{pmatrix}n+2d\\ d\end{pmatrix}\cdot \begin{pmatrix}n+d\\ n\end{pmatrix}^{-1}\right\rceil \xrightarrow{n\rightarrow\infty}\infty.\qedhere\]
\end{proof}

Using algebraic versions of Sard's Theorem (see e.g.\ \cite[Sec.\ 9.6]{bochnak98}) the preceding proof carries over to $R[x_1,\dots,x_n]$ for $R$ a real closed field. For $n=1$ (i.e., $\sA = \sA_{1,d} = \{1,x,\dots,x^d\}$ for all $d\in\nset$)  the minimal number of squares in $\rset[x]$ is
\[\left\lceil \frac{|\sA_{1,2d}|}{|\sA_{1,d}|}\right\rceil = \left\lceil\frac{2d+1}{d+1}\right\rceil = 2 \qquad\forall d\in\nset.\]
This is also the maximal number which is needed \cite[Prop.\ 1.2.1]{marshallPosPoly}. 

The following example shows that for univariate polynomials  with gaps the Pythagoras number can be arbitrary large.

\begin{exm}\label{exm:pythlargegaps}
Let $\sA = \{1,x,x^3,x^7\}$, then $\sA^2 = \{1,x,x^2,x^3,x^4,x^6,x^7,x^8,x^{10},x^{14}\}$, i.e., $|\sA| = 4$ and $|\sA^2| = 10$. The upper bound in Lemma \ref{lem:fulldimCones}\,\ref{item:bounds}) is attained since $10 = \frac{4(4+1)}{2}$. Hence, by Proposition \ref{prop:pythagoras}, 
\[\pyt_{\sA^2} \geq \left\lceil\frac{10}{4}\right\rceil = 3.\]
There  is a sum of squares  $(y_1 + y_2\cdot x + y_2\cdot x^3 + y_4\cdot x^7)^2$ which cannot be written as a  sum of less than $3$ squares. 

In general, by choosing appropriate numbers $d_1 < d_2 < \dots < d_m$, $d_i\in\nset_0$, we have $|\sA^2| = \frac{m(m+1)}{2}$, so that $\frac{m+1}{2}\leq \pyt_{\sA^2}$ and hence $\pyt_{\sA^2}\rightarrow\infty$ as $m\rightarrow\infty$.
\end{exm}

Of course, the preceding example is equivalent to $\tilde{\sA} = \{1,x,y,z\}$ when we set $y = x^3$ and $z = x^7$ and no cancellations in $(\tilde {\sA})^2$ appear. Thus, studying  univariate cases with gaps might give new insight into  multivariate cases. Which $\pyt_\sA=k\in\nset$ can be realized is an open problem (Problem \ref{open:pythagoras}).

For univariate polynomials with gaps nonnegative polynomials are not necessarily sum of squares, as shown by the next example.

\begin{exm}\label{exm:notSOS}
Let $\sA = \{1,x^2,x^3\}$. Then $\sA^2 = \{1,x^2,x^3,x^4,x^5,x^6\}$ and
\[p(x) := x^6 - x^4 + 10\in\pos(\sA^2)\setminus\Sigma\sA^2.\]
That $p$ is not a sum of squares follows immediately from
\[(a + bx^2 + cx^3)^2 = a^2 + 2abx^2 + 2acx^3 + b^2x^4 + 2bcx^5 + c^2x^6\]
since the coefficient $b^2$ of $x^4$ is non-negative.
\end{exm}

So, again, univariate polynomial systems with gaps bear properties of the multivariate polynomials.\medskip

Our second application concerns the \emph{tensor decomposition}, see e.g.\ \cite{bernar13}. A tensor $T$ is a multilinear map
\[T:\rset^{n_1}\times\dots\times\rset^{n_d}\rightarrow\rset \quad\text{with}\quad T=(t_{i_1,\dots,i_d})_{\substack{ i_1=1,\dots,n_1\\\dots \\ i_d=1,\dots,n_d}} \in\rset^{n_1\times\dots\times n_d}.\]
The simplest tensor is the rank $1$ tensor
\[x_1\otimes \dots\otimes x_d := (x_{1,i_1}\cdots x_{d,i_d})_{\substack{ i_1=1,\dots,n_1\\\dots \\ i_d=1,\dots,n_d}}\]
with $x_i =(x_{i,1},\dots,x_{i,n_i})\in\rset^{n_i}$. In order to bring tensors into the framework of moment problems  we set $\sA = \{x_{1,i_1}\cdots x_{d,i_d} \;|\; i_j=1,\dots,n_j,\ j=1,\dots,d\}$. Then the tensor decomposition
\begin{equation}\label{eq:tensordecomp}
T = \sum_{j=1}^k v_{j,1}\otimes\dots\otimes v_{j,d} \quad\text{with}\quad v_{j,i}\in\rset^{n_i}
\end{equation}
is nothing but determining  a signed representing $k'$-atomic measure $k'\leq k$. From Theorem \ref{thm:NAlowerCat} we get the lower bound on the number $R$ of rank $1$ tensors
\[\left\lceil\frac{n_1\cdots n_d}{n_1 + \dots + n_d - d+1}\right\rceil \leq R.\]
Thus, finding a signed atomic representing measure (or an approximation) for $T$ is equivalent to finding a tensor decomposition (\ref{eq:tensordecomp}) (or  an approximation).

\section{Maximal masses and conic optimization}
\label{sec:maxmass}

\begin{dfn}
For $s\in \cS_\sA$ the \emph{maximal mass function} $\rho_s$ is 
\[\rho_s(x) := \sup \{\mu (A_x) \,|\, \mu\in \fM_\sA(s)\}, \quad x\in \cX,\]
with $A_x := s_{\sA}^{-1}(s_\sA(x)) = \{y\in\cX \,|\, s_\sA(x) = s_\sA(y)\}$.
\end{dfn}

We have $\cW(s)=\{ x\in \cX \,|\, \rho_s(x)>0\}$ from Lemma \ref{lem:setofatoms}. Another important quantity is defined by
\begin{equation}\label{eq:rhoLm2}
\kappa_s(x) =\inf\{\ L_s(p) ~|~ p\in {\Pos}(\sA),\ p(x)=1\}
\end{equation}
where\, $\frac{c}{0}:=+\infty$ for $c\geq 0.$

The nonnegative number $\kappa_s(x)$ in (\ref{eq:rhoLm2}) is defined by a conic optimization problem: It is the infimum of the Riesz functional $L_s$ over  the cone ${\Pos}({\sA},\cK)$ under the constraint $p(x)=1$. The following results is contained in \cite[Prop.\ 8]{didio17Cara}.

\begin{prop}\label{prop:kappaGleichRho}
Suppose the moment cone $\cS_\sA$ is pointed (that is, line-free) and $x\in\cX$. Then
\begin{equation}\label{eq:rhoequpiL}
\rho_s(x) = \sup \{ c\in \rset_+ ~|~ s- c\cdot s_\sA(x) \in \cS_\sA \} \leq \kappa_s(x) < \infty.
\end{equation}
If $\cS_\sA$ is also closed, then $\kappa_s(x) = \rho_s(x)$, the supremum in (\ref{eq:rhoequpiL}) is a maximum, and
\[s - \rho_s(x)s_\sA(x)\in\partial\cS_\sA.\]
\end{prop}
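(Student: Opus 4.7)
The statement splits into three tasks: (i) the reformulation $\rho_s(x) = \sup\{c\geq 0:s-c\,s_\sA(x)\in\cS_\sA\}$; (ii) the sandwich $\rho_s(x)\leq \kappa_s(x)<\infty$; and (iii) under the extra closedness hypothesis, the equality $\kappa_s(x)=\rho_s(x)$, attainment of the supremum, and the boundary assertion. The cone-duality between $\overline{\cS_\sA}$ and $\Pos(\sA)$ (via $v\mapsto \langle v,s_\sA(\cdot)\rangle$) and the constancy of each $p\in\lin\sA$ on $A_x$ are used throughout.

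For (i), given $\mu\in\fM_\sA(s)$ put $c:=\mu(A_x)$. Since $s_\sA$ is $\rset^m$-valued measurable, $A_x$ is measurable, and $s_\sA\equiv s_\sA(x)$ on $A_x$. Decomposing $\mu=\mu|_{A_x}+\mu|_{\cX\setminus A_x}$ yields $\int s_\sA\,d\mu|_{A_x}=c\cdot s_\sA(x)$, so $s-c\,s_\sA(x)=\int s_\sA\,d\mu|_{\cX\setminus A_x}\in\cS_\sA$. Conversely, if $s-c\,s_\sA(x)=\int s_\sA\,d\nu$ then $\mu:=\nu+c\,\delta_x\in\fM_\sA(s)$ with $\mu(A_x)\geq c$. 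Passing to suprema gives (i).

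For (ii), any admissible $p=\langle\vec p,s_\sA(\cdot)\rangle$ with $p(x)=1$ equals $1$ on all of $A_x$, so for $\mu\in\fM_\sA(s)$
\[L_s(p)=\int_\cX p\,d\mu\;\geq\;\int_{A_x}p\,d\mu=\mu(A_x);\]
taking $\sup$ over $\mu$ and $\inf$ over $p$ yields $\rho_s(x)\leq \kappa_s(x)$. For finiteness, pointedness of $\cS_\sA$ passes to the closed cone $\overline{\cS_\sA}$, hence its dual $\Pos(\sA)$ has nonempty interior in $\lin\sA$. Any $p_0\in\inter\Pos(\sA)$ satisfies $p_0(y)>0$ whenever $s_\sA(y)\neq 0$ (otherwise a perturbation $p_0+\varepsilon q$ with $q(y)\neq 0$ would leave $\Pos(\sA)$). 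If $s_\sA(x)=0$ the feasible set is empty and $\kappa_s(x)=+\infty$ vacuously; otherwise $p:=p_0/p_0(x)$ is feasible with $L_s(p)=L_s(p_0)/p_0(x)<\infty$.

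For (iii), closedness of $\cS_\sA$ makes $C:=\{c\geq 0:s-c\,s_\sA(x)\in\cS_\sA\}$ closed, and the argument above (with the cone property $c_n^{-1}s-s_\sA(x)\in\cS_\sA$ and pointedness of $\overline{\cS_\sA}$) shows $C$ is bounded when $s_\sA(x)\neq 0$. Hence $C=[0,\rho_s(x)]$ and the supremum is a maximum. For the reverse bound, fix $\varepsilon>0$; since $s-(\rho_s(x)+\varepsilon)s_\sA(x)\notin\cS_\sA$ and $\cS_\sA$ is closed convex, Hahn--Banach furnishes $v\in\cS_\sA^*$ with $\langle v,s\rangle<(\rho_s(x)+\varepsilon)\langle v,s_\sA(x)\rangle$. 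Together with $\langle v,s\rangle\geq 0$ this forces $\langle v,s_\sA(x)\rangle>0$, so $p_\varepsilon:=\langle v,s_\sA(\cdot)\rangle/\langle v,s_\sA(x)\rangle\in\Pos(\sA)$ is admissible and $L_s(p_\varepsilon)<\rho_s(x)+\varepsilon$; letting $\varepsilon\downarrow 0$ gives $\kappa_s(x)\leq \rho_s(x)$. Finally, $s-\rho_s(x)\,s_\sA(x)\in\cS_\sA$ but every neighborhood contains points $s-(\rho_s(x)+\varepsilon)s_\sA(x)\notin\cS_\sA$, so it lies in $\partial\cS_\sA$. The main obstacle is the separation step: one must extract from Hahn--Banach a nonnegative functional whose induced polynomial is strictly positive at $x$ (not merely nonnegative), so that normalization to $p(x)=1$ is legitimate; pointedness is precisely what rules out the pathological $s_\sA(x)=0$ case that would break this normalization.
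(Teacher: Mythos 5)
Your argument for the two main inequalities coincides with the paper's: the reformulation of $\rho_s(x)$ via the decomposition $\mu=\mu|_{A_x}+\mu|_{\cX\setminus A_x}$ is exactly the content of Lemma \ref{lem:setofatoms} that the paper invokes, and the estimate $L_s(p)\geq p(x)\mu(A_x)$ is verbatim the paper's proof of $\rho_s(x)\leq\kappa_s(x)$. Where you genuinely diverge is the closed case: the paper disposes of attainment, the equality $\kappa_s(x)=\rho_s(x)$, and the boundary statement by citing \cite[Lem.\ A.40]{schmudMomentBook}, whereas you prove all three directly. Your separation argument is correct and is worth spelling out: since $\cS_\sA$ is a closed convex cone and $s-(\rho_s(x)+\varepsilon)s_\sA(x)\notin\cS_\sA$, a separating functional can be taken in $\cS_\sA^*$ (the infimum of a linear functional over a cone containing $0$ is either $0$ or $-\infty$), and the inequality $\langle v,s\rangle<(\rho_s(x)+\varepsilon)\langle v,s_\sA(x)\rangle$ together with $\langle v,s\rangle\geq 0$ forces $\langle v,s_\sA(x)\rangle>0$, so the normalization is legitimate. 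Likewise your compactness argument for $C=[0,\rho_s(x)]$ correctly uses pointedness: $\pm s_\sA(x)\in\cS_\sA$ would put the line $\rset\, s_\sA(x)$ inside $\cS_\sA$. This buys a self-contained proof at the cost of a little length.

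One step deserves a warning: your finiteness argument rests on the claim that pointedness of $\cS_\sA$ passes to $\overline{\cS_\sA}$. That is false for general convex cones (the open upper half-plane together with the origin is line-free, its closure is not), and it is precisely what you need for $\Pos(\sA)=\cS_\sA^*$ to have nonempty interior in $\lin\sA$, equivalently for the existence of some $p\in\Pos(\sA)$ with $p(x)>0$. If $-s_\sA(x)$ lies in $\overline{\cS_\sA}\setminus\cS_\sA$, every $p\in\Pos(\sA)$ vanishes at $x$ and the feasible set in (\ref{eq:rhoLm2}) is empty. To be fair, the paper's own proof does not address $\kappa_s(x)<\infty$ at all, and in the closed case your separation argument already produces a feasible $p$ with finite value, so the gap only concerns the non-closed case; but as written the justification is not sound and should either assume $\overline{\cS_\sA}$ pointed or be argued differently.
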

\begin{proof}
The first equality in (\ref{eq:rhoequpiL}) is clear from Lemma \ref{lem:setofatoms}. Let $p\in {\Pos}(\sA)$ and  $p(x)=1$. Then 
\[L_s(p) = \int_\cX p(y)~\diff\mu(y) \geq p(x)\mu(A_x) = \mu(A_x).\]
Taking the infimum over $p$ and the supremum over $\mu\in\fM_\sA(s)$ we get $\rho_s(x) \leq \kappa_s(x)$.

Suppose $\cS_\sA$ is closed and pointed. From \cite[Lem.\ A.40]{schmudMomentBook} it follows that the supremum in (\ref{eq:rhoequpiL}) is attained,  $s - \rho_s(x) s_\sA(x)\in\partial\cS_\sA$ and $\kappa_s(x) = \rho_s(x)$.
\end{proof}

If $\sA\subseteq C(\cX,\rset)$, there is an $e\in\lin\sA$ such that $e>0$, and $\cX$ is compact, then the moment cone $\cS_\sA$ is pointed  and closed. Thus, if $\sA$ consists  of continuous functions on a compact space, then the maximal mass function $\rho_s(x)$ can be computed by the conic optimization problem (\ref{eq:rhoLm2}).

Now we turn to the problem when the masses of an atomic representing measure are maximal.

\begin{dfn}
Let $s\in\cS_\sA$ and let $\mu = \sum_{j=1}^k c_j\delta_{x_j}$, $c_j>0$, be a $k$-atomic representing measure of $s$. We say that $\mu$ has \emph{maximal mass} at $x_i$ if $c_i = \rho_s(x_i)$ and that $\mu$ is a \emph{maximal mass measure} for $s$ if $c_j=\rho_s(x_j)$ for all $j=1,\dots,k$.
\end{dfn}

Let $\mu=\sum_{j=1}^k c_j\delta_{x_j}$, $c_j>0$, be a  representing measure of $s$. Fix $i\in \{1,\dots,k\}$. Suppose there exists a function $f\in \Pos (\sA)$ such that $p(x_i)=1$ and $p(x_j)=0$ for all $j\neq i$. Then, for any $\nu\in \fM_\sA(s)$, we have
\begin{equation}\label{eq:maxmassequ}
\nu(A_{x_i})\leq \int p(x)~ d\nu=L_s(p)=\int p(x) ~d\mu= c_i. 
\end{equation}
Therefore, $c_i=\rho_s(x_i)$ and $\mu$ has maximal mass at $x_i$.

\begin{dfn}\label{dfn:pspproperty0}
We say that points $x_1,\dots,x_k\in \cX$ satisfy the \emph{positive separation property} $(PSP)_{\sA}$ if there exist functions $p_1,\dots,p_k\in\Pos(\sA)$ such that 
\begin{equation}
p_i(x_j) = \delta_{i,j}\quad \text{for}\quad i,j=1,\dots,k.
\end{equation}
\end{dfn}

By the reasoning preceding Definition \ref{dfn:pspproperty0} it follows that if $x_1,\dots,x_k\in\cX$ obey  $(PSP)_{\sA}$, then $\mu = \sum_{j=1}^k c_j \delta_{x_j}\in \fM_\sA(s)$ is a maximal mass measure.

If  $\cS_\sA$ is closed and pointed and $s$ is an inner point of $\cS_\sA$, then the converse of the preceding statement is true, that is, if $\mu = \sum_{j=1}^k c_j\delta_{x_j}\in \fM_\sA(s)$ is a maximal mass measure, then the points $x_1,\dots,x_k$ satisfy $(PSP)_\sA$. Indeed, the assumption imply that $\rho_s(x_j) = \kappa_s(x_j)$ and the infimum (\ref{eq:rhoLm2}) for $x=x_j$ is attained at some function $p_j$. One easily verifies that (\ref{eq:maxmassequ}) holds for the functions $p_1,\dots,p_k$. Thus, $x_1,\dots,x_k$ satisfy $(PSP)_\sA$. More details and examples on this matter can be found in \cite[Sec.\ 18.4]{schmudMomentBook}.

The remaining part of this section is devoted to  the question when the infimum in (\ref{eq:rhoLm2}) is a minimum. As noted by \cite[Prop.\ 18.28]{schmudMomentBook}, this holds if $s$ is an interior point of the moment cone. 
The following example shows that for boundary points  the infimum in  (\ref{eq:rhoLm2}) is not necessarily attained.

\begin{exm}\label{exm:kappaNotAttained}
Let $\cX = \rset$, $\alpha\in [1,\infty)$, and $\sA = \{1,x,f_\alpha(x)\}$, where 
\[f_\alpha(x) := \begin{cases} 0 & \text{for}\ x\leq 0\\ x^\alpha & \text{for}\ x>0 \end{cases},\]
comsider the moment sequence $s := (2,-2,0) = s_\sA(0) + s_\sA(-2)$. Then  we have $\rho_s(-2) = \kappa_s(-2) = 1.$

First suppose $\alpha=1$. Set $p(x) = -x/2 + f_\alpha(x)$. Then $p\in\pos(\sA)$, $p(-2)=1$ and $L_s(p)=1$, that is, the infimum in  (\ref{eq:rhoLm2}) at $x=-2$ is attained for $p$.

Now suppose $\alpha > 1$. We  show that the infimum  (\ref{eq:rhoLm2}) for $x=-2$ is not attained.
Assume  the contrary.  Then there exists  $p(x)=a+bx +cf_\alpha(x) \in\pos(\sA)$ such that $L_s(p)=1$, so $2a-2b=1$, and $p(-2) =1$, so $a-2b=1$. Hence $a=0$ and $b=-\frac{1}{2}$. We consider the function $p(x)=-x/2 +cf_\alpha(x) \in\pos(\sA)$ on $(0,\varepsilon)$ for small $\varepsilon>0$ and conclude that $c=0$. Thus,  $p(x)=-x/2  \in\pos(\sA)$, a contradiction.

Since $L_s(f_\alpha)=0$ and $f_\alpha\in \pos(\sA)$, $s$ is  a boundary point of the moment cone.

We  illustrate the  fact that the infimum  (\ref{eq:rhoLm2}) in the case $\alpha >1$ is not attained from a slightly different view point. There exists  a sequence $(p_n)_{n\in\nset}$ of  functions $p_n(x) = a_n + b_n x + c_n f_\alpha(x)$ in $\pos(\sA)$ such that $p_n(-2)=1  $ and $\lim_{n\to \infty} L_s(p_n)=\kappa_s(-2)=1$. From $p_n(-2)=a_n-2b_n$ and $L_s(p_n)=2a_n- 2b_n$ we conclude that $\lim_{n\to \infty} a_n=0$ and $\lim_{n\to \infty} b_n=\frac{1}{2}(a_n-1)=-\frac{1}{2}$. We claim that $\lim_{n\to \infty} c_n=\infty$. Indeed, otherwise there is a subsequence $(c_{n_k}) $ of $(c_n)$ which has a finite limit $c$. Then
\[\lim_{k\to \infty} p_{n_k}(x) = \lim_{k\to \infty} a_{n_k} +b_{n_k} x +c_{n_k} f_{\alpha}(x) = -x/2 +cx^{\alpha}\geq 0\ \text{for small}\ x>0,\]
which is a contradiction. Hence $\lim_n c_n = \infty$, so the sequence $(p_n)$ does not converge.
\end{exm}

In Example \ref{exm:kappaNotAttained} we have seen two boundary points of the moment cone, one for which the infimum (\ref{eq:rhoLm2}) is a minimum, for the other  it is not. The difference  lies in the geometry of moment cone at the boundary point. While  $s_\sA(0)$ is an ``edge'' of the moment cone, the moment cone is ``round'' in a neighborhood of $s$ for $\alpha>1$. This shows that whether or not the optimization problem (\ref{eq:rhoLm2}) has a solution depends on the geometry of the boundary of the moment cone.

\begin{exm}
Let $\cX = \rset$,  $\sA = \{1,x,x^2(x+2)^2\}$, and $\sB = \{1,x,f(x)\}$, where
\[f(x) := \begin{cases} x^2(x+2)^2 & \text{for}\ x>0\ \text{or}\ x<-2\\ 0 & \text{for}\ x\in (-2,0)\end{cases}.\]
Then  $\cS_\sB = \cS_\sA$ and both cones have the same local behavior at $s_\sA(0) = s_\sB(0) = (1,0,0)$. Setting
 $$s := (2,-2,0) = s_\sA(0) + s_\sA(-2)= s_\sB(0) + s_\sB(-2),$$ it follows by a similar reasoning as in Example \ref{exm:kappaNotAttained} that for $\sA$ and  $\sB$  the infimum (\ref{eq:rhoLm2}) at $x=-2$  is not attained.
\end{exm}

The next result  characterizes the case when the infimum (\ref{eq:rhoLm2}) is a minimum. In particular,  it implies \cite[Prop.\ 18.28]{schmudMomentBook}.

\begin{thm}\label{charattained}
Suppose  $\cS_\sA$ is closed and pointed (i.e., line-free). Let $s\in\cS_\sA$ and $x\in\cX$. Set $s' := s - \rho_s(x) s_\sA(x)\in\partial\cS_\sA$. The following are equivalent:
\begin{enumerate}[i)]
\item The infimum (\ref{eq:rhoLm2}) for $\kappa_s(x)$  is attained at some function $p\in\pos(\sA)$.
\item $x\not\in\cV(s')$.
\end{enumerate}
\end{thm}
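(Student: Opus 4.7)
The plan is to exploit the single identity
\[L_{s'}(q) = L_s(q) - \rho_s(x)\, q(x) \quad \text{for all } q\in\lin\sA,\]
which follows directly from $s'=s-\rho_s(x)s_\sA(x)$. First I would invoke Proposition \ref{prop:kappaGleichRho} to record that $\kappa_s(x)=\rho_s(x)$, that the supremum defining $\rho_s(x)$ is attained, and that $s'\in\cS_\sA\cap\partial\cS_\sA$; in particular $\cN(s')$ and $\cV(s')$ make sense.

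For the direction (i)$\Rightarrow$(ii), let $p\in\pos(\sA)$ with $p(x)=1$ realize the infimum, so $L_s(p)=\kappa_s(x)=\rho_s(x)$. The identity yields $L_{s'}(p)=\rho_s(x)-\rho_s(x)\cdot 1=0$. Since also $p\in\pos(\sA)$, this says $p\in\cN(s')$, whence $\cV(s')\subseteq\cZ(p)$ by the definition of $\cV(s')$. But $p(x)=1\neq 0$, so $x\notin\cZ(p)$, and therefore $x\notin\cV(s')$.

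For the direction (ii)$\Rightarrow$(i), I would produce the optimizer via Proposition \ref{prop:onep}. Because $s'\in\partial\cS_\sA$, the standard separation theorem provides a nontrivial supporting hyperplane at $s'$, so $\nor_\sA(s')\neq\{0\}$, and as a closed convex cone it has nonempty relative interior. Pick any $\vec{p}\in\relint\nor_\sA(s')$. By the characterization of $\nor_\sA$ the corresponding function $p$ lies in $\pos(\sA)$ with $L_{s'}(p)=0$, i.e.\ $p\in\cN(s')$, and by Proposition \ref{prop:onep} we have $\cV(s')=\cZ(p)$. The hypothesis $x\notin\cV(s')$ then forces $p(x)\neq 0$, and since $p\geq 0$ we get $p(x)>0$; rescaling by $p(x)^{-1}$ keeps $p$ in $\pos(\sA)\cap\cN(s')$ and yields $p(x)=1$. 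Applying the identity at this $p$ gives $L_s(p)=L_{s'}(p)+\rho_s(x)\cdot 1=\rho_s(x)=\kappa_s(x)$, so the infimum is attained.

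The technical point that deserves a sentence of care, rather than a real obstacle, is the invocation of Proposition \ref{prop:onep}: one has to observe that it applies to the boundary sequence $s'$ (which is in $\cS_\sA$ because $\cS_\sA$ is closed) and that $\relint\nor_\sA(s')$ is nonempty precisely because $\cS_\sA$ is line-free, which prevents $\nor_\sA(s')$ from collapsing trivially. Once that is in place, the entire argument reduces to the bookkeeping of the affine shift $L_s\mapsto L_{s'}$ together with the one-line translation between ``$p$ realizes $\kappa_s(x)$'' and ``$p\in\cN(s')$ with $p(x)\neq 0$''.
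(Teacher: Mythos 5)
Your proof is correct and follows essentially the same route as the paper: both rest on Proposition \ref{prop:kappaGleichRho} to get $\kappa_s(x)=\rho_s(x)$ and $s'\in\partial\cS_\sA$, the identity $L_{s'}(q)=L_s(q)-\rho_s(x)q(x)$ to translate ``$p$ attains the infimum'' into ``$p\in\cN(s')$ with $p(x)=1$'', and Proposition \ref{prop:onep} to produce a single $p$ with $\cZ(p)=\cV(s')$ for the converse. Your write-up merely makes explicit the two directions that the paper compresses into one chain of equivalences; the only cosmetic slip is attributing the nonemptiness of $\relint\nor_\sA(s')$ to pointedness rather than simply to $\nor_\sA(s')$ being a nonempty convex set, which does not affect the argument.
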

\begin{proof}
By Proposition \ref{prop:onep} there is a $p\in\pos(\sA)$ such that $\cV(s') = \cZ(p)$. By  scaling  we can assume $p(x) = 1$. By Proposition \ref{prop:kappaGleichRho} we have $\kappa_s(x) = \rho_s(x) < \infty$. Then
\[\text{i)} \gdw p(x) = 1 \und L_s(p) = \kappa_s(x) \gdw p(x)=1 \und L_{s'}(p)=0 \gdw \text{ii)}.\qedhere\]
\end{proof}

Let us retain the assumptions and the notation of Theorem \ref{charattained}. Then, by this theorem, $x\in\cV(s')$ if and only if the infimum (\ref{eq:rhoLm2}) for $\kappa_s(x)$  is {\it not} a minimum.
Further, if $\cV(s') = \cW(s')$, then the infimum (\ref{eq:rhoLm2}) is attained. Hence the case that the infimum  (\ref{eq:rhoLm2}) is not attained appears only when  $\cV(s') \neq \cW(s')$. That is, $\kappa_s(x)$ is a minimum for all $x\in\cX$ and $s\in\cS_\sA$ if and only if the moment cone $\cS_\sA$ is perfect (according to Definition \ref{dfn:perfect}).

%

From the definition of $s'$ it is clear that $x\not\in\cW(s')$. Hence, each  example  $s\in \cS_\sA$ such that the infimum (\ref{eq:rhoLm2}) is not attained is of the following  form: $s'\in\cS_\sA$ with $\cV(s')\neq\cW(s')$ and $s = s' + c\cdot s_\sA(x)$ for $x\in\cV(s')\setminus\cW(s')$ and $c>0$.

Note that the proof of Theorem 24 in \cite{schmud15} contains a gap. The following example shows that this result does not hold without additional assumptions.

\begin{exm}[Example \ref{exm:harris} revisited]\label{exm:countermaxmass}
Let $\sA = \sB_{2,10}$ on $\cX=\pset^2$ and retain the notation of Example \ref{exm:harris}. Recall that $z_i, i=1,\dots,30,$ are the projective zeros of the Harris polynomial. Let $x_i=z_i$ and $c_i>0$ for $i=1,\dots,23$ and $x=x_j$ for one   $j\in\{24,\dots,30\}$ and $c>0$. Set $$s := \sum_{i=1}^{23} c_i\cdot s_\sA(x_i) + c\cdot s_\sA(x) = s' + c\cdot s_\sA(x).$$
 As shown in Example \ref{exm:harris},  $\cV(s')=\{z_1,\dots,z_{30}\}$ and  $\cW(s')=\{z_1,\dots,z_{23}\}$. 

Then the infimum  (\ref{eq:rhoLm2}) for  $\kappa_s(x) = c=\rho_s(x) $ is not attained. Indeed, if the infimum  (\ref{eq:rhoLm2}) would be  attained at some $p\in\pos(\sA)$ with $p(x)=1$, then $$c = L_s(p) = L_{s'}(p) + L_{c\cdot s_\sA(x)}(p) = L_{s'}(p) + c\cdot p(x) = L_{s'}(p) + c,$$ so that $L_{s'}(p)=0$. Hence $p$ is a multiple of the Harris polynomial and therefore, $p(x)=0$, a contradiction. 
\end{exm}

\section{Open Problems}
\label{sec:open}

The final section is devoted to a list of open problems which are related to the topics treated in this paper.

As shown in Corollary \ref{cor:a10b10}, the cones $\cS_{\sA_{n,10}}$ and $\cS_{\sB_{n,10}}$ for $n\geq 2$ are not perfect. It is likely to expect that this holds for polynomials of higher degrees as well.

\begin{open}\label{open:perfect}
Are the cones $\cS_{\sA_{n,2k}}$ and $\cS_{\sB_{n,2k}}$ perfect for $k\in \nset$?
\end{open}

In Section \ref{sec:face}, the zero sets of the polynomials $\fp$ in (\ref{eq:p}) and $\fq$ in (\ref{eq:q}) played a crucial role and a number of inequalities  (\ref{eq:trends}) were stated for the pairs in table \ref{tab:Ds}. This leads to the following  problems.

\begin{open}\label{open:exposedDims}
Do  the inequalities  in (\ref{eq:trends}) holds for arbitrary pairs $(n,d)$?
\end{open}

\begin{open}\label{open:limits}
Do the limits in (\ref{eq:limitsW}) and (\ref{eq:limitsZ}) exist? If yes, what are these limits?
\end{open}

In Section \ref{sec:cara}, the polynomials $\fp$ and $\fq$ were used to derive bounds for Carath\'eodory numbers.

\begin{open}\label{open:otherPolynomials}
Can the lower bounds of the Carath\'eodory numbers $\cat_\sA$ in table \ref{tab:Ds} be (significantly) improved by using other nonnegative polynomials with finitely many zeros  than $\fp$  and $\fq$? What happens then with the limits in (\ref{eq:limitsW}) and (\ref{eq:limitsZ})?
\end{open}

In Section \ref{sec:inner} we investigated the inner structure of the moment cone $\cS_\sA$. The first question comes from the definition of  regular/singular moment sequences.

\begin{open}\label{open:regular}
Do the regularity/singularity notions in Definition \ref{dfn:regular} depend on $k$? Is it possible that a moment sequence is regular for  $k\in\nset$, but singular for some $k' > k$?
\end{open}

There are two (independent) distinguished properties of ``nice'' behavior of sets $\sA$. This first  is that ``$\cat_\sA=\cN_\sA$'', while the second is stated as (\ref{eq:interregular}): ``$s\in\inter\cS_\sA \gdw s$ is regular''. As shown in Section \ref{sec:inner}, both properties are valid for $\sA = \{1,x,\dots,x^d\}$ on $\rset$ and $\sA = \{x^{d_1},\dots,x^{d_m}\}$ on $(0,\infty)$.

\begin{open}\label{open:internal}
Are there other finite sets $\sA$ of polynomials for which $\cat_\sA = \cN_\sA$ and (\ref{eq:interregular}) hold? Are there other useful properties to distinguish "nice" moment problems?
\end{open}

Using the Harris polynomial we  constructed in Example \ref{exm:harris} a moment sequence $s$ for $\sA = \sB_{2,10}$ on $\cX=\pset^2$ for which $\cV(s)\neq \cW(s)$. Since $\cW(s)=\cV_C(s)$, this means that $\cV_1(s)$ is not the core variety. This suggests the following problem, see also teh discussion after the proof of Theorem \ref{thm:WsZps}.

\begin{open}
Let $\sA = \sB_{n,2k}$ on $\cX=\pset^n$ for $k,n\in \nset$. Given $m\in \nset$, does there exist a moment sequence $s$ such that $\cV_{m-1}(s)\neq \cV_C(s)$  and $\cV_m(s)=\cV_C(s)$?
\end{open}

While  $\rset[x]$ has the Pythagoras number 2, univariate polynomials with gaps can have arbitrary large Pythagoras numbers (see Example \ref{exm:pythlargegaps}).

\begin{open}\label{open:pythagoras}
What is the Pythagoras number of
\[\sA = \{x^{d_1},\dots,x^{d_m},x^{d_m+1},x^{d_m+2},\dots\} \qquad\text{for}\qquad d_1 < d_2 <\dots < d_m,\ d_i\in\nset_0,\]
i.e., finitely many $x^{d}$ in $\rset[x]$ are removed? What happens when infinitely many $x^d$ are removed? Which numbers can be obtained as Pythagoras numbers in this manner?
\end{open}

\section*{Acknowledgment}

P.dD.\ thanks the Deutsche Forschungsgemeinschaft for support from the grand SCHM1009/6-1.


\providecommand{\bysame}{\leavevmode\hbox to3em{\hrulefill}\thinspace}
\providecommand{\MR}{\relax\ifhmode\unskip\space\fi MR }
\providecommand{\MRhref}[2]{%
  \href{http://www.ams.org/mathscinet-getitem?mr=#1}{#2}
}
\providecommand{\href}[2]{#2}

\end{document}